\documentclass[]{amsart}
\usepackage{amsmath, amsthm, amssymb}
\usepackage[T1]{fontenc}
\usepackage[utf8]{inputenc}     
\usepackage[dvipsnames]{xcolor}
\usepackage{xstring}            
\usepackage{enumerate}
\usepackage[
  colorlinks=true, linkcolor=blue, linkbordercolor=blue, citecolor=red, citebordercolor=red, urlcolor=blue, linktocpage=true
]{hyperref}
\usepackage[capitalise, nameinlink, noabbrev, nosort]{cleveref} 
\usepackage[lite]{amsrefs}  

\usepackage{mathtools}          
\usepackage{bm}           
\usepackage[shortcuts]{extdash}       
\usepackage{subcaption}         
\usepackage{accents}
\usepackage{xspace}

\usepackage{xparse} 

\usepackage{tikz-cd}
\usetikzlibrary{decorations.markings}
\tikzset{double line with arrow/.style args={#1,#2}{decorate,decoration={markings,%
mark=at position 0 with {\coordinate (ta-base-1) at (0,-2pt);
\coordinate (ta-base-2) at (0,2pt);},
mark=at position 1 with {\draw[#1] (ta-base-1) -- (0,-2pt);
\draw[#2] (ta-base-2) -- (0,2pt);
}}}}

\DeclarePairedDelimiter\abs{\lvert}{\rvert}        

\DeclarePairedDelimiter\braces{\{}{\}}            

\newcommand{\bigbrace}[1]{\braces[\big]{#1}}    
    
\newcommand{\bigmid}{\mathrel{\big|}}            

\DeclareMathOperator{\diam}{diam}            

\theoremstyle{plain}
\newtheorem{thm}{Theorem}[section]        
\newtheorem{prop}[thm]{Proposition}        
\newtheorem{lem}[thm]{Lemma}            \newtheorem{lemma}[thm]{Lemma}
\newtheorem{cor}[thm]{Corollary}        \newtheorem{corollary}[thm]{Corollary}
            \newtheorem{question}[thm]{Question}

\newtheorem{fact}[thm]{Fact}

\newtheorem*{thm*}{Theorem}            \newtheorem*{theorem*}{Theorem}
\newtheorem*{prop*}{Proposition}        \newtheorem*{proposition*}{Proposition}
\newtheorem*{lem*}{Lemma}            \newtheorem*{lemma*}{Lemma}
\newtheorem*{cor*}{Corollary}            \newtheorem*{corollary*}{Corollary}
\newtheorem*{qu*}{Question}            \newtheorem*{question*}{Question}
\newtheorem*{conj*}{Conjecture}            \newtheorem*{conjecture*}{Question}
\newtheorem*{prob*}{Problem}        \newtheorem*{problem*}{Problem}
\newtheorem*{fact*}{Fact}
\newtheorem*{claim*}{Claim}
\newtheorem*{case*}{Case}

\numberwithin{equation}{section}

\newtheorem{alphthm}{Theorem}            

\newtheorem{alphcor}[alphthm]{Corollary}           

\theoremstyle{definition}

        \newtheorem{convention}[thm]{Convention}

\newtheorem*{de*}{Definition}            \newtheorem{definition*}{Definition}
\newtheorem*{notation*}{Notation}
\newtheorem*{conv*}{Convention}            \newtheorem*{convention*}{Convention}

\theoremstyle{remark}
            \newtheorem{remark}[thm]{Remark}

\newtheorem*{rmk*}{Remark}        \newtheorem*{remark*}{Remark}

\DeclareMathAlphabet{\mathbit}{OT1}{cmr}{bx}{it}

\crefname{thm}{Theorem}{Theorems}              \crefname{theorem}{Theorem}{Theorems}
\crefname{prop}{Proposition}{Propositions}     \crefname{proposition}{Proposition}{Propositions}
\crefname{lem}{Lemma}{Lemmas}                  \crefname{lemma}{Lemma}{Lemmas}
\crefname{rmk}{Remark}{Remarks}                \crefname{remark}{Remark}{Remarks}
\crefname{cor}{Corollary}{Corollaries}         \crefname{corollary}{Corollary}{Corollaries}
\crefname{qu}{Question}{Questions}             \crefname{question}{Question}{Questions}
\crefname{conj}{Conjecture}{Conjectures}       \crefname{conjecture}{Conjecture}{Conjectures}
\crefname{prob}{Problem}{Problems}             \crefname{problem}{Problem}{Problems}
\crefname{fact}{Fact}{Facts}
\crefname{claim}{Claim}{Claims}
\crefname{case}{Case}{Cases}
\crefname{alphthm}{Theorem}{Theorems}          \crefname{alphcor}{Corollary}{Corollaries}
\crefname{alphprop}{Proposition}{Propositions}
\crefname{alphprolem}{Problem}{Problems}

\newcommand{\Cat}[1]{\ifmmode \text{\normalfont \textbf{#1}} \else {\normalfont \textbf{#1}}\fi}

\newcommand\MakeComments[3]{
  \newcounter{#2comment}\addtocounter{#2comment}{1}%
  \newcommand{#1}[1]{%
     \textbf{\color{#3}(\StrChar{#2}{1}\arabic{#2comment})}%
     \marginpar{\tiny\raggedright\textbf{\color{#3}(\StrChar{#2}{1}\arabic{#2comment}) #2:} ##1}%
    \addtocounter{#2comment}{1}%
  }%
}

\definecolor{newpurple}{rgb}{0.8, 0, 0.9}

\newcommand{\Debug}{1}

\ifnum \Debug = 0 \usepackage[notref,notcite]{showkeys}
\fi

\MakeComments{\anote}{Agelos}{Orange}
\MakeComments{\fnote}{Fede}{ForestGreen}

\newcommand{\cd}{\ensuremath{\mathcal D}}
\newcommand{\cv}{\ensuremath{\mathcal V}}
\newcommand{\R}{\ensuremath{\mathbb R}}
\newcommand{\Z}{\ensuremath{\mathbb Z}}

\newcommand{\labtequ}[2]{
 \begin{equation} \label{#1} 	\begin{minipage}[c]{0.9\textwidth}  #2 \end{minipage} \ignorespacesafterend \end{equation} }

\usepackage{tikz-cd, tikz} 
\usepackage{transparent}
\usepackage{xcolor}
\usetikzlibrary{decorations.pathmorphing}
\usetikzlibrary{hobby} 
\usetikzlibrary{patterns} 
\usetikzlibrary{intersections} 
\tikzset{
bnode/.style={circle,fill=black, inner sep =0.8 pt},
sbnode/.style={circle,fill=black, inner sep =0.6pt},
middlearrow/.style={
        decoration={markings,
            mark= at position 0.55 with {\arrow{#1}} ,
        },
        postaction={decorate}
    }
}
\def \globalscale {2}

\title{Triangulating surfaces quasi-isometrically}

\author{Agelos Georgakopoulos}        
\address[Agelos Georgakopoulos]{{Mathematics Institute}\\ {University of Warwick}\\  {CV4 7AL, UK.}}  

\author{Federico Vigolo}       
\address[Federico Vigolo]{Mathematisches Institut, Georg-August-Universit\"{a}t G\"{o}ttingen, Bunsenstr. 3-5, 37073 G\"{o}ttingen, Germany.}

\thanks{AG is supported by EPSRC grant  EP/V009044/1.}

\begin{document}
	
\begin{abstract}
  We prove that if a complete Riemannian surface $(\Sigma,d_\Sigma)$ is quasi-isometric to some bounded degree graph $G$, then $\Sigma$ admits a triangulation whose 1-skeleton is quasi-isometric to it when equipped with the simplicial metric. We study several variants of the problem, and identify the right condition making it an if and only if statement. 
\end{abstract}

\maketitle

{\bf{Keywords:}} complete Riemannian surface, triangulation, quasi-isometry, planar graph, mapping class group, flip graph. \\ 

{\bf{MSC 2020 Classification:} 51F30, 57M15, 57Q15, 57R05, 05C10}

\section{Introduction}

A classical result of Rado states that every closed topological surface admits a triangulation, and it is well-known that this extends to the non-compact case \cite{AhlSarRie}. If a surface $\Sigma$ has additional structure, such as a metric or conformal structure, finding triangulations of $\Sigma$ with desirable properties can have far-reaching consequences about $\Sigma$. For instance, there are connections between equilateral triangulations and conformal structures \cite{BisRemNon}; thick triangulations and quasimeromorphic mappings \cite{saucan2005note}; triangulations with bounded edge-length and large-scale geometry/geometric group theory \cite{Maillot},\cite{kahn2012counting}*{Lemma 2.2}.

In this paper we focus on the large-scale geometric structure of surfaces equipped with a complete Riemannian metric. Our main result is the following.

\begin{alphthm} \label{thm: uni net intro}
If complete Riemannian surface $(\Sigma, d_\Sigma)$ has a uniform net, then $\Sigma$ admits a triangulation of whose (simplicial) $1$-skeleton is quasi-isometric to $(\Sigma, d_\Sigma)$.
\end{alphthm}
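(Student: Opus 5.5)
We fix a uniform net $N\subset\Sigma$: it is $r$-dense for some $r>0$ and uniformly discrete, and it has bounded geometry, meaning that every ball of radius $R$ contains at most $C(R)$ points of $N$. The plan is to triangulate $\Sigma$ by cells that are neither too small nor too large with respect to $d_\Sigma$. If every face of the triangulation has $d_\Sigma$-diameter at most some $D$, and every metric ball of a fixed radius meets at most $K$ faces, then the identity on vertices will be a quasi-isometry from the $1$-skeleton to $(\Sigma,d_\Sigma)$. The reason is as follows. Adjacent vertices lie at distance at most $D$, so the map is Lipschitz. Conversely, a $d_\Sigma$-geodesic of length $L$ can be covered by about $L$ balls of radius $1$, each meeting at most $K$ faces. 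We can therefore follow the geodesic through consecutive faces and obtain a simplicial path of length $O(KL+1)$.

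The construction proceeds in the following steps.

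\textbf{Step 1.} We build a locally finite graph $\Gamma$ embedded in $\Sigma$ whose vertex set is a coarsening of $N$. We choose $\epsilon$ small relative to the local geometry near each net point; this is subtle, because the injectivity radius need not be bounded below, so we do not use $\epsilon$ uniformly. We then join net points that are at distance at most $3r$ by paths of length at most $3r$, and perturb these paths into general position so that they meet only finitely often in any compact set. We add every intersection point as a vertex. Bounded geometry of $N$ bounds the number of such paths meeting any ball, but it does not bound the number of crossings; to control crossings we replace geodesics by piecewise smooth arcs in general position and, if necessary, reroute them.

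\textbf{Step 2.} We show that each complementary region of the resulting plane graph has diameter at most about $3r$. This uses density of $N$: any point is within $r$ of a net point, and the arcs between nearby net points fence off regions of bounded size. The difficulty is that the regions need not be discs, since a small region can contain a handle. The fix is to first choose a locally finite collection of disjoint simple closed curves and arcs cutting $\Sigma$ into discs, and then refine.

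\textbf{Step 3.} We triangulate each disc face by coning from a new interior vertex, after subdividing the boundary if needed. The number of faces and edges per unit ball stays bounded, provided each face has boundedly many boundary vertices.

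The main obstacle is the joint control in Steps 1 and 2 of topology and combinatorics. We need bounded complexity per unit ball (bounded degree and boundedly many crossings) while ensuring that faces are discs of bounded diameter, even where the surface has many small handles or thin parts. The existence of a uniform net already forces the topology in each ball to have bounded complexity coarsely, but not literally. I would try to handle this by a thick–thin type argument. Where $\Sigma$ has small-scale topology, for instance a short essential loop, I would allow a small face containing topology and then cut it by a bounded number of extra arcs. The number of such arcs is bounded because genus per ball is coarsely bounded, which in turn would be derived from $N$ via a covering argument. If this fails, the alternative is to invoke the quasi-isometry to a bounded degree graph from the abstract. I would then thicken that graph into a surface, compare it with $\Sigma$ via a coarse decomposition into pieces of bounded complexity, and triangulate piece by piece.
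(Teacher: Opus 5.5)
Your plan depends on an intermediate goal that cannot be achieved in general. That goal is a triangulation with faces of bounded $d_\Sigma$-diameter and boundedly many faces (or bounded degree, or boundedly many crossings) per ball. The claim you would use to reach it, that a uniform net bounds the genus per ball even coarsely, is false, because a uniform net only sees large-scale geometry.

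Take $\mathbb R^2$ and, for each $n$, attach $n$ tiny handles inside the unit square around $(3n,0)$. This surface still has a uniform net (essentially $\mathbb Z^2$). Given any $3$-gonal decomposition with faces of diameter at most $D$, let $S$ be the union of the closed faces meeting that square. The $2n$ standard handle curves are independent in $H_1(\Sigma;\mathbb Q)$ by the intersection pairing, so $H_1(S;\mathbb Q)$ has rank at least $2n$. On the other hand, this rank is at most the number of edges of $S$, hence at most three times its number of faces. So a ball of radius $2$ meets at least $2n/3$ faces.

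If degrees are at most $\Delta$, these faces have at least $2n/(3\Delta)$ distinct corners within distance $2+D$ of $(3n,0)$. A graph of maximal degree $\Delta$ has at most $\Delta^{C+1}$ vertices within simplicial distance $C$ of a vertex, so bounded degree is impossible as well.

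For the same reason, with boundedly many arcs per ball (which is what uniformity gives you), the number of crossing vertices per ball must be unbounded. Rerouting cannot help: those vertices have to be removed, not controlled. Your fallback via pieces of bounded complexity runs into the same obstruction.

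The remedy is to bound \emph{vertices} per ball rather than faces, and to let parallel edges and degenerate triangles absorb the local topology. Your quasi-isometry argument adapts directly: the corners of the faces met by a geodesic of length $L$ span a connected subgraph with $O(L+1)$ vertices. This is what the paper does in the proof of \cref{thm Maillot}:
\begin{enumerate}
\item It starts from a $3$-gonal decomposition $G$ that is quasi-isometric for the \emph{length} metric, with no control on local complexity.
\item It contracts a geodetic spanning tree of each Voronoi cell $V_x$ onto its net point $x$, so the vertex set becomes exactly the uniform net $X$; only the essential degree stays bounded.
\item It bounds simplicial distances by counting the Voronoi cells met by a short path in $G$, which uniformity of $X$ controls.
\item It takes a barycentric subdivision to obtain a genuine triangulation.
\end{enumerate}

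Your Step 2 is also unsupported: joining nearby net points by one short arc each does not bound the faces. On a long thin tube of circumference much smaller than $r$, the arcs between consecutive net points may all run along one side, leaving a long strip as a face. Pre-cutting $\Sigma$ into discs by an arbitrary locally finite system makes faces discs but does not bound their diameter.

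The missing idea is to include, for each pair of net points at distance less than $\Theta$, a representative of \emph{every} homotopy class of curves of length less than $\Theta$ joining them (homotopy taken within a compact neighbourhood of the pair). This gives the homotopy approximation property \eqref{homotopy approximation}, which yields three things at once:
\begin{enumerate}
\item the quasi-isometry for the length metric;
\item the bound on the extrinsic diameter of faces, since a long curve inside a face would have to be crossed by the graph approximation of a suitably chosen short curve;
\item the fact that faces are discs, since every arc in a face with endpoints on its boundary becomes null-homotopic once the boundary of the completed face $\widehat F$ is collapsed.
\end{enumerate}
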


Admitting a uniform net (see \Cref{subsec QI} for definitions) is well-known to be equivalent to being quasi-isometric to a bounded-degree graph \cite{KanaiRough2}.

By allowing degenerate triangles in our `triangulation' we can ensure that the $0$-skeleton (i.e.\ the vertices) of the triangulation constructed in \cref{thm: uni net intro} coincides with any given uniform net of $\Sigma$. 
Moreover, the 1-skeleton of our triangulation will have bounded vertex degrees up to ignoring parallel edges. The latter property in fact yields a converse of \Cref{thm: uni net intro}, which we make precise in \Cref{cor uni net}.

Similar statements had been proved by Maillot \cite{Maillot}, and our techniques provide simpler proofs and strengthenings of said results (see \Cref{thm Maillot} and the discussion following it).
  
\medskip

Our proof of \cref{thm: uni net intro} relies on the construction of triangulations that are quasi-isometric with respect to the induced length metric. To make this precise,
let us first recall that any rectifiable, arc-connected, subspace $X$ of a Riemannian manifold $(\Sigma,d_\Sigma)$ inherits a \emph{length metric $d^X_\ell$} via
\labtequ{dell}{$d^X_\ell(x,y)\coloneqq \inf\{ \ell(\gamma) \mid \gamma \text{ is a $x$--$y$~arc in } X \},$}
whereby $\ell(\gamma)$ denotes the length of $\gamma$ with respect to $d_\Sigma$. 
{The following statement is of independent interest.}

\begin{alphthm} \label{thm:triangulationIntro}
Let $(\Sigma,d_\Sigma)$ be a complete Riemannian surface, and $\Xi\in \R_{\geq 0}$. Then there is a triangulation $\mathcal T$ of $(\Sigma,d_\Sigma)$ each 2-cell $C$ of which has diameter at most $\Xi$ with respect to its  length metric $d^C_\ell$. Moreover, the identity map from the 1-skeleton $(G,d^G_\ell)$ of $\mathcal T$ to $(\Sigma,d_\Sigma)$ is ($1$-Lipschitz and) a quasi-isometry.
\end{alphthm}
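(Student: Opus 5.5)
Note first that for $\Xi=0$ the statement cannot hold literally, so I read it as ``for every $\Xi>0$''. I then argue in two stages: a local construction that makes cells small and edges comparable to geodesics, followed by a global check that the identity $(G,d^G_\ell)\to(\Sigma,d_\Sigma)$ is a quasi-isometry.

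\textbf{Stage 1: a net and a geodesic-like graph.} Fix a small parameter $\epsilon\le\Xi/10$ and choose a maximal $\epsilon$-separated set $N\subset\Sigma$. By maximality $N$ is $\epsilon$-dense. Since the metric is only complete and not of bounded geometry, the injectivity radius may tend to $0$. I therefore let the scale vary: pick a continuous function $r\colon\Sigma\to(0,\epsilon]$ with $r(x)$ smaller than the convexity radius at $x$, and take $N$ maximal subject to the condition that $d_\Sigma(x,y)\ge\min(r(x),r(y))$ for distinct $x,y\in N$. Join nearby net points by minimizing geodesics lying inside strongly convex balls. The natural choice is the Delaunay/Voronoi dual of $N$, built from geodesic Voronoi cells in the locally convex regime. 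Using small convex balls, distinct such geodesic segments meet at most once, and after a generic perturbation of $N$ they intersect only at endpoints. Alternatively, one can add the intersection points as new vertices; this preserves all length estimates. Each complementary region then has small perimeter and lies in a convex ball of radius less than $\epsilon$. I subdivide each face by coning from an interior point along short geodesics, which gives a genuine triangulation $\mathcal T$. Every $2$-cell $C$ lies in a convex ball of radius at most $2\epsilon$, so any two points of $C$ are joined inside $C$ by a path of length at most about $4\epsilon+{}$(perimeter) $\le\Xi$. This uses only that $C$ is a geodesic triangle in a convex ball and is star-shaped from a vertex.

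\textbf{Stage 2: the quasi-isometry.} The identity is $1$-Lipschitz because $d_\Sigma\le d^G_\ell$ trivially. Coarse surjectivity is immediate, since $N\subset G$ is $\epsilon$-dense. The substantive inequality is $d^G_\ell(x,y)\le A\,d_\Sigma(x,y)+B$. For this, take a $d_\Sigma$-geodesic $\gamma$ from $x$ to $y$ and subdivide it into pieces of length about $\epsilon$. Each subdivision point lies within $\Xi$ of a vertex of the cell containing it, along a path inside that cell, and consecutive such vertices are at $d_\Sigma$-distance at most $3\epsilon$. The key claim is then: any two vertices at $d_\Sigma$-distance at most $3\epsilon$ are at $d^G_\ell$-distance at most some constant $K$. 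Summing over the pieces gives $d^G_\ell\le (K/\epsilon)\,d_\Sigma+K+2\Xi$.

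\textbf{Main obstacle.} The difficulty is proving the key claim uniformly when the injectivity radius is not bounded below, since there the net is finer and $\gamma$ crosses many cells. What I would try first is to make the bound scale-invariant. I would show that $\gamma$ crosses cells whose diameters are comparable to the length of the portion of $\gamma$ inside them, and that $\partial C$ has length at most a constant times the diameter of $C$; the latter holds for geodesic triangles in convex balls. Then replace each subarc of $\gamma$ in a cell $C$ by an arc of $\partial C$ of length at most $3\cdot\mathrm{diam}(C)$. The real issue is the case where $\gamma$ clips a large cell only briefly. I would handle it by charging that detour to the cell's diameter, and bound how often this happens by the local separation condition on $N$, which guarantees each cell has diameter comparable to its inradius. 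This yields $d^G_\ell\le A\,d_\Sigma+B$ with constants independent of the geometry.
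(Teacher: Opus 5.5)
\textbf{Genuine gap: the quasi-isometry bound.} The step you flag as the main obstacle is not proved, and the mechanism you sketch for it fails at the scale you chose. The bound $d^G_\ell(x,y)\le A\,d_\Sigma(x,y)+B$ is the whole content of the quasi-isometry claim. You rest it on the local separation of $N$ forcing cells of $\mathcal T$ to have diameter comparable to their inradius, so that a short geodesic meets boundedly many cells. Taking $r(x)$ below the convexity radius does not give this, because strong negative curvature does not shrink the convexity radius.

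Here is a concrete failure. Take a region isometric to a large disc of curvature $-K$, where you get $r\approx\epsilon$:
\begin{itemize}
\item every geodesic triangle there has inradius at most $\log 3/(2\sqrt K)$, while your cells have diameter at least $\epsilon$;
\item a maximal $\epsilon$-separated set may contain about $e^{\sqrt K\epsilon/2}$ points spread along a circle of radius slightly less than $\epsilon$;
\item after a small perturbation, their Delaunay triangulation can be a zigzag triangulation of the inscribed polygon;
\item a geodesic of length about $2\epsilon$ then crosses all these cells, and two antipodal vertices are at $d^G_\ell$-distance at least of order $\epsilon e^{\sqrt K\epsilon/2}$.
\end{itemize}
If $K$ is unbounded on $\Sigma$, no uniform constant exists.

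A repair is possible, and it is the real content your sketch lacks. You need $r$ slowly varying and small enough that $r(x)^{-2}g$ has uniformly bounded curvature and injectivity radius near $x$. Volume comparison then bounds the number of net points in $B(x;Cr(x))$. The local estimate follows by walking through the Voronoi cells the geodesic meets.

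\textbf{Second gap: embeddedness in Stage 1.} A generic perturbation of $N$ cannot remove crossings between geodesic segments, because transverse crossings are stable. Embeddedness has to come from the empty-ball property of a Riemannian Delaunay triangulation. Its existence for a variable-density sample is a nontrivial theorem with density hypotheses, which you would have to prove or cite. Your fallback of adding intersection points as vertices leads exactly to the problems the paper solves: showing the faces are discs of bounded size and cutting them by short arcs.

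\textbf{How the paper proceeds.} The paper avoids local geometry entirely. It fixes one scale and a $\theta$-dense set $X$. For every pair $x,y\in X$ with $d_\Sigma(x,y)<\Theta$, it draws a representative of every homotopy class of short $x$--$y$ curves near $B(x;\Theta)\cap B(y;\Theta)$. This builds \eqref{homotopy approximation} into $\overline G$, so the quasi-isometry estimate is immediate (\cref{lem:close homotopy}), regardless of curvature or injectivity radius. The effort shifts to topology:
\begin{itemize}
\item faces have bounded extrinsic diameter, by a parity argument (\cref{prop:bounded diameter});
\item faces are discs (\cref{cor: cell decomposition});
\item the resulting thin polygons are triangulated by short $\epsilon$-geodesics (\cref{prop: triangulate polygons}).
\end{itemize}
A side benefit is that the vertex set contains any prescribed net, which the paper needs for \cref{thm Maillot}.
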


\cref{thm:triangulationIntro} was recently proved by Ntalampekos \& Romney \cite{ntalampekos2023polyhedral} using fine geometric machinery, but we provide an alternative, more combinatorial proof. Our proof further shows that, whenever $(\Sigma,d_\Sigma)$ is equipped with a quasi-isometrically embedded graph $G$ that is fine enough to approximate homotopy types of curves in a ``metrically controlled'' manner, then we can turn $G$ into a quasi-isometric triangulation of $\Sigma$ without adding extra vertices to it. We refer to \cref{thm: filling triangulation} for the precise technical statement we prove.

In contrast, the idea of the proof of \cite{ntalampekos2023polyhedral} is to start with a sufficiently fine triangulation of $\Sigma$ so that all the triangles are convex. Once this is done, one may further refine this triangulation in such a way that the resulting triangulation is $\epsilon$-isometric to $\Sigma$ (\emph{i.e.}\ it is quasi-isometric via a $(1,\epsilon)$-quasi-isometry), see \cite{ntalampekos2023polyhedral}*{Proposition 5.2}. 

\medskip

The lengths of the edges of the graph $G$ of \cref{thm:triangulationIntro} obey a uniform upper-bound $c \Xi$, but no uniform lower bound. In particular, \cref{thm:triangulationIntro} does \emph{not} imply that the embedding of $G$ in $\Sigma$ is a quasi-isometry when $G$ is equipped with its simplicial graph-metric, and hence does not directly imply \cref{thm: uni net intro}.

Understanding whether metric graphs with desirable properties are quasi-isometric to simplicial graphs with analogous properties is generally an interesting and hard problem. For instance, the first author \& Papasoglu \cite{GeoPapMin} asked whether for every graph $G$ embedded into a Riemannian plane $(\Sigma, d_\Sigma)$, the metric $(G,d_\ell)$ is quasi-isometric to that of a  simplicial planar graph. An equivalent formulation is whether every planar metric graph is quasi-isometric to a simplicial planar graph. This question was answered in the affirmative by Davies \cite{DavStr}. 

In turn, it was natural to ask whether every Riemannian surface can be triangulated in such a way that the embedding of the $1$-skeleton is a quasi-isometry when the latter is given its simplicial graph metric. It turns out that the answer is no in general: Davies \cite{DavSur} constructed complete Riemannian surfaces which are not quasi-isometric 
to any embedded \emph{simplicial graph}. That is, he constructed a surface $(\Sigma,d_\Sigma)$ so that there does not exist a graph with all edges having length 1, with an embedding into $\Sigma$ that is also a quasi-isometry with $(\Sigma,d_\Sigma)$. 
This shows that \cref{thm: uni net intro,thm:triangulationIntro} are  sharp.

\subsection{Generalising the setup}

Although we have been focusing on unbounded surfaces, our results and proofs can be applied essentially verbatim to families of compact surfaces with increasing diameter and genus, providing triangulations with uniform distortion bounds. On the other hand, we do not know if our results extend to higher-dimensional manifolds, see \Cref{ssec: hi dim} for more.

{We prove our results} for surfaces with boundary (with slightly more technical formulations): the precise statements we prove are \cref{thm: filling triangulation,thm: triangulation,thm Maillot}, which directly imply \cref{thm:triangulationIntro} and \cref{thm: uni net intro}.
Moreover, almost all the techniques we use do not rely on the fact that the metric is Riemannian, so they apply to topological surfaces equipped with complete length metrics. The one important exception is \cref{lem: resolve intersections}, which relies on a perturbation argument that is not available for arbitrary length surfaces.

\subsection{Eliminating the multiplicative or additive distortion}

Having obtained quasi-isometric triangulation theorems, it is natural to ask if we need to have both the multiplicative and the additive distortion involved in our quasi-isometries. 
Questions of this type currently attract interest in the context of coarse graph theory.
Nguyen, Scott \& Seymour \cite{NgScSeAsyII} observe that there are planar (simplicial) graphs $G,H$ such that there is a $(M,A)$-quasi-isometry from $G$ to $H$ for some (multiplicative, resp.\ additive) constants $M,A$, but no $(1,A')$-quasi-isometry.
It is thus natural to ask whether \cref{thm:triangulationIntro} can be sharpened by imposing that $(G,d^G_\ell)$ be $(1,A)$-quasi-isometric to $(\Sigma,d_\Sigma)$. Ntalampekos \& Romney \cite{ntalampekos2023polyhedral} show that this is the case, and James Davies (private communication) pointed out that our construction can be refined to achieve this too:

\begin{alphcor}\label{cor M=1 intro}
  The triangulation of \cref{thm:triangulationIntro} can be chosen so that $(G,d^G_\ell)$ is $(1,A)$-quasi-isometric to $(\Sigma,d_\Sigma)$.
\end{alphcor}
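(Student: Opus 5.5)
Since the identity $(G,d^G_\ell)\to(\Sigma,d_\Sigma)$ is $1$-Lipschitz, it suffices to show $d^G_\ell(x,y)\le d_\Sigma(x,y)+A$ for all $x,y\in G$. The plan is to refine the construction behind \cref{thm:triangulationIntro}: first pick a fine graph whose path metric approximates $d_\Sigma$ with additive error only, then fill it to a triangulation without adding vertices.

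\textbf{Step 1: an additively accurate skeleton.} Fix a scale $r>0$ below the injectivity-type radius needed locally, and a maximal $r$-separated net $N\subseteq\Sigma$. For each pair $u,v\in N$ with $d_\Sigma(u,v)\le R$ (with $R\gg r$ fixed), choose a minimizing geodesic $\gamma_{uv}$. Let $G_0$ be the union of these geodesics. Given $x,y\in\Sigma$, take a minimizing geodesic $\gamma$ from $x$ to $y$ and points $x=p_0,p_1,\dots,p_k=y$ on $\gamma$ spaced at distance $R-2r$. Replace each $p_i$ by a net point $u_i$ within $r$, and concatenate the $\gamma_{u_iu_{i+1}}$. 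Each piece has length at most $d(p_i,p_{i+1})+2r$, so the total is at most $d_\Sigma(x,y)+2rk+2r$. This is multiplicative $1+O(r/R)$, not $1$.

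\textbf{Step 2: remove the multiplicative error.} To get a genuinely additive bound, the pair-scale must be unbounded, but edges must stay bounded. I would use a multiscale family instead: for each $n$, join net points at distance $\le R_n$ by geodesics, with $R_n\to\infty$, but only inside the region where this keeps local complexity finite. This fails to give bounded diameter cells, so instead I would exploit that geodesics of $G_0$ cross. The alternative I would try first is to resolve by perturbation (\cref{lem: resolve intersections}) and to note that the concatenated path can be straightened: the errors $2r$ per step arise from detours to net points. If we instead add, for every geodesic in $G_0$, all its transverse intersection points as vertices, then a long geodesic $\gamma$ is shadowed by geodesic segments of $G_0$ that fellow-travel it. The error per step should become $O(r^2/R)$ by the first-variation formula (a short detour of height $r$ over length $R$ costs about $r^2/R$ locally in bounded geometry). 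Choosing $r^2/R$ summably small is not possible uniformly in $k$, however. So the honest fix is to realize the geodesics $\gamma_{uv}$ for \emph{all} pairs $u,v\in N$, in a locally finite way.

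\textbf{Step 3: fill and check.} Feed the resulting quasi-isometrically embedded graph into \cref{thm: filling triangulation}, which adds only edges of bounded length. Adding edges only decreases $d^G_\ell$, so the $(1,A)$ bound persists, and the cells have diameter $\le\Xi$ as before.

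\textbf{Main obstacle.} The main obstacle is Step 2: obtaining an additive rather than multiplicative error while keeping the graph locally finite. The step I expect to carry the proof is choosing, around each long geodesic, a locally finite system of curves whose per-step error is summable along any geodesic. Davies' refinement presumably does exactly this. I would try making the spacing grow with distance, via a hierarchical net $N_j$ at scale $2^j$ with connecting geodesics of length $\sim 2^{j}\cdot j^2$. Errors are then $\sum_j O(2^j/j^2)$ relative to a length-$2^J$ geodesic, which is still multiplicative, so this also needs care.
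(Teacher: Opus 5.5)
Your proposal does not contain a proof: Step~2, which you yourself flag as the main obstacle, is never resolved. The ``honest fix'' of joining all pairs of net points by geodesics cannot be locally finite (in $\mathbb R^2$, for instance, infinitely many such segments pass through any disc). Your hierarchical scheme still has error growing linearly in $d_\Sigma(x,y)$, as you note. The underlying problem is that you insist on a net of \emph{uniform} density $r$. Then every step of the shadowing argument of Step~1 can cost up to $2r$, and no choice of $r,R$ makes $\sum_k 2r$ bounded independently of the number of steps.

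The missing idea is that nothing forces the vertex set to be uniform: \cref{thm: filling triangulation} only needs $X$ locally finite and $\theta$-dense. The paper covers $\Sigma$ by a locally finite family of sets $U_n$ of diameter $<\Theta/2$ and places in each $U_n$ a finite $\theta 2^{-n}$-dense set $X_n$, with $X=\bigcup_n X_n$. Then it runs exactly your Step~1, with sample points $p_k$ spaced $\Theta/2$ apart along a geodesic from $x$ to $y$. Distinct sample points are at distance at least $\Theta/2$, so each $U_n$ contains at most one of them. Hence indices $n_k$ with $p_k\in U_{n_k}$ are pairwise distinct, and the total snapping error is $\sum_k 2\theta 2^{-n_k}\le 2\theta\sum_n 2^{-n}$, a constant independent of $d_\Sigma(x,y)$.

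Each segment between consecutive snapped points $x_k,x_{k+1}$ is realised in $\overline G$ with arbitrarily small (and summably chosen) error. This is because the construction of \cref{ssec:construction of G} includes a near-shortest representative of every homotopy class of curves of length $<\Theta$ between nearby points of $X$. This is also what you need, rather than a single minimizing geodesic per pair, to get \eqref{homotopy approximation} and be allowed to invoke \cref{thm: filling triangulation}.

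Finally, your Step~3 claims the filling step only adds edges. In fact it also removes edges via \cref{lem: removing 2-gons}. This is harmless for the $(1,A)$ bound only when $\partial\Sigma=\emptyset$, where the removed edges are loops or the longer of two parallel edges. That is why the paper restricts to surfaces without boundary.
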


\begin{remark}
  Even if $\Sigma$ has a uniform net, the construction of \cref{cor M=1 intro} will always contain arbitrarily short edges.
  We expect that it is not true in general that one can find a triangulation whose simplicial $1$-skeleton is $(1,A)$-quasi-isometric to $(\Sigma,d_\Sigma)$.
\end{remark}

Similarly, one can ask whether the additive constant $A$ can be set to 0 by modifying the quasi-isometry between fixed planar graphs $G,H$, i.e.\ whether $G,H$ are bi-Lipschitz equivalent. In Section~\ref{sec bLe} we combine our results with a theorem of Burago \& Kleiner \cite{BuKlSep} to provide a strong negative answer: 

\begin{alphcor} \label{cor AD intro}
There are plane graphs $G,H$ (with bounded degrees and face-boundary sizes, potentially with parallel edges) which are quasi-isometric but not bi-Lipschitz equivalent.
\end{alphcor}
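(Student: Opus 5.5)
We plan to deduce \cref{cor AD intro} from \cref{thm: uni net intro} together with the theorem of Burago \& Kleiner \cite{BuKlSep}. That theorem provides a separated net $N\subset \R^2$ which is not bi-Lipschitz equivalent to $\Z^2$. Equivalently, it provides a bounded, measurable density $\rho\colon \R^2\to[c,C]$ with $0<c\le C<\infty$ such that no bi-Lipschitz map $f$ of the plane pushes the Lebesgue measure to $\rho$ times Lebesgue measure, i.e.\ satisfies $\det Df=\rho$.

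The strategy is to turn $N$ into a plane graph. First we make $N$ the vertex set of a triangulation of $\R^2$ with its Euclidean metric, and we take $H$ to be its $1$-skeleton. Concretely, we use the version of \cref{thm: uni net intro} in which the $0$-skeleton coincides with a prescribed uniform net (possibly with degenerate triangles and parallel edges). A simpler alternative is the Delaunay triangulation of $N$. Since $N$ is uniformly separated and coarsely dense, this triangulation has bounded degrees and triangular faces, and its simplicial metric is quasi-isometric to $\R^2$.

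For $G$ we take the square grid $\Z^2$, whose faces have size $4$. Both $G$ and $H$ are then quasi-isometric to $\R^2$, hence to each other, and both have bounded degrees and bounded face-boundary sizes.

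Next, suppose for contradiction that $\phi\colon V(G)\to V(H)$ is a bi-Lipschitz bijection for the simplicial metrics. We compose $\phi$ with the inclusions $\Z^2\hookrightarrow\R^2$ and $N\hookrightarrow \R^2$. The simplicial metric on $H$ restricted to $N$ is bi-Lipschitz to the Euclidean metric on $N$. This needs two facts. First, Euclidean edge lengths in $H$ are bounded above and below: the lower bound is the separation of $N$, and the upper bound holds because Delaunay edges are short when $N$ is coarsely dense. Second, simplicial distance is controlled by Euclidean distance between distinct points of $N$, because $H$ is quasi-isometric to $\R^2$ and the additive constant is absorbed by the separation of $N$.

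From this we obtain a bi-Lipschitz bijection $\Z^2\to N$ with respect to Euclidean metrics, contradicting Burago--Kleiner. Thus $G$ and $H$ are quasi-isometric but not bi-Lipschitz equivalent.

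The main obstacle is the step that upgrades quasi-isometry to bi-Lipschitz equivalence on the vertex sets. For this we must verify that the triangulation built on $N$ gives a simplicial metric that is genuinely bi-Lipschitz to the Euclidean metric on $N$, not merely quasi-isometric. The key point is that on uniformly separated sets, distinct points are at distance bounded below, so the additive constants can be absorbed into multiplicative ones.

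We also need a reading of ``bi-Lipschitz equivalent'' for graphs; we take it to mean a bi-Lipschitz bijection of vertex sets. If the intended notion allows the bijection to be onto a net rather than the whole vertex set, we would invoke Burago--Kleiner's statement in its net-to-net form, which still applies.
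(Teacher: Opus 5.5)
Your proposal is correct and is essentially the paper's argument: take the Burago--Kleiner net, build a $3$-gonal decomposition with exactly that vertex set via \cref{thm Maillot}, and use that a quasi-isometric bijection between uniformly separated sets is bi-Lipschitz, so that a bi-Lipschitz equivalence of the graphs would give one between the net and $\Z^2$ (the paper also remarks that the $\Z^2$ grid can serve as the second graph). Your Delaunay alternative is a valid and more elementary shortcut specific to $\R^2$: empty circumdiscs have radius at most the density constant, so edges are short and degrees are bounded by packing, and it even gives a genuine simplicial triangulation without parallel edges, which makes the corollary independent of the paper's main theorems.
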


An interesting open question is whether every (non-planar) graph $G$ which is quasi-isometric to a planar graph $H$ must be $(1,A)$-quasi-isometric to some planar graph $H'$; see  \cite{NgScSeAsyII} for more.

\subsection{On mapping class groups and flip graphs.}

This work also provides an interesting perspective on `asymptotic' mapping class groups and flip graphs.
The mapping class group $\mathrm{MSC}(\Sigma, X)$ of a marked surface $\Sigma, X$ is a central topic in geometric topology. Computing it directly is generally hard, and therefore it is often studied by constructing a graph $G$ whose group of graph-automorphisms is naturally isomorphic to $\mathrm{MSC}(\Sigma, X)$. One such graph is the \emph{flip-graph} $\mathcal F$ of $(\Sigma, X)$, the vertices of which are certain triangulations of $\Sigma$. The fruitful relation between $\mathrm{MSC}(\Sigma, X)$ and $Aut(\mathcal F)$ is known to break down when $(\Sigma, X)$ is of infinite type. As we explain in \cref{ssec: flip-graph}, our triangulations can be used to adapt the definitions of the flip-graph (and the mapping class group) so that the aforementioned relation may be preserved.

\subsection*{Acknowledgments}
We are very 
grateful to James Davies for various comments and the proof of \cref{cor M=1 intro}. We thank Dimitrios Ntalampekos and Matthew Romney for interesting discussions on their work.

\section{Notation and preliminaries}

\subsection{Quasi-isometries and nets} \label{subsec QI}
Let $(X,d_X)$ and $(Y,d_Y)$ be metric spaces. For $M \in \R_{\geq 1}$ and $A \in \R_{\geq 0}$, an \emph{$(M, A)$-quasi-isometry} from $(X,d_X)$ to $(Y,d_Y)$ is a map~$\varphi \colon X \rightarrow Y$ such that
\begin{enumerate}
    \item \label{quasiisom:1} for every $x,y \in X$ we have
    \[\frac 1M \cdot d_X(x,y) - A \leq d_Y(\varphi(g),\varphi(h)) \leq M\cdot d_X(x,y)+A,\]
    \item \label{quasiisom:2} for every $y \in Y$ there is $x \in X$ such that $d_Y(y,\varphi(x)) \leq A$.
\end{enumerate}
If there is such a map, $(X,d_X)$ and $(Y,d_Y)$ are \emph{quasi-isometric}. It is well-known, and not hard to see, that this relation is symmetric and transitive.

\medskip

Given $x\in X$ and $r>0$, let $B(x;r)$ denote the open ball of radius $r$ around $x$. Call a set $Y\subset \Sigma$ \emph{locally finite}, if $Y \cap B$ is finite for each ball $B\subset \Sigma$ with finite radius.

A subset $Y\subset X$ is $\theta$-separated if $d_X(x,x')>\theta$ for every $x\neq x'\in X$. It is $\theta$-dense if for every $y\in X$ there is $x\in X$ with $d_X(x,y)\leq \theta$. It is a \emph{net} if it is $\theta$-separated and $\theta'$-dense for some $\theta' \geq \theta>0$. Easily, the inclusion $(Y,d_X)\hookrightarrow (X,d_X)$ is a quasi-isometry if and only if $Y$ is $\theta$-dense for some $\theta>0$.
We say that a net $Y$ of $X$ is \emph{uniform}, if for all $R\in \R$ there is a $n(R)\in \mathbb{N}$ such that for all $x \in X$, the cardinality of $Y \cap B(x;R)$ is at most $n(R)$.

\subsection{Curves}
A \emph{curve} in a metric space $(X,d_X)$ is a continuous map $\gamma\colon I\to X$ where $I\subseteq\mathbb R$ is some interval. If $I=[a,b]$, we call $\gamma(a)$ and $\gamma(b)$  the \emph{endpoints} of $\gamma$.
A curve is \emph{simple} if it is injective, and it is a \emph{simple closed curve} if its endpoints coincide but it is injective otherwise.

A curve $\gamma$ is \emph{rectifiable} if its length $\abs{\gamma}$ is finite. By a \emph{geodesic} we mean a curve $\gamma\colon [a,b]\to X$ that realizes the distance between its endpoints, i.e.\ $\abs{\gamma}=d_X(\gamma(a),\gamma(b))$.
We will often omit the parametrization and identify a curve with its image $\gamma(I)\subseteq X$. If we do refer to a parametrization for a rectifiable curve, we will generally mean the arc-length parametrization.

\subsection{Riemannian surfaces}
A \emph{surface (with boundary and corners)} is a topological surface $X$ (with boundary $\partial X$) together with a smooth atlas so that every point has a neighbourhood that is diffeomorphic to an open set in $[0,\infty)\times[0,\infty)\subset \mathbb R^2$.
A curve $\gamma\colon I\to X$ is \emph{piecewise smooth} if it is smooth except perhaps at a \emph{discrete} set of points in $I$, where a set is discrete if it has no accumulation points. In particular, every connected component of $\partial X$ is (the image of) a piecewise smooth curve in $X$.

\begin{convention}
  Throughout the paper $(\Sigma, d_\Sigma)$ stands for a complete connected Riemannian surface, possibly with boundary and corners.
\end{convention}

We will sometimes need to work with contours of subsets of $\Sigma$, and we will need them to consist of unions of simple curves. This can be problematic in general, but one can always find arbitrarily small neighbourhoods that do have this property. This fact is well-known but we did not find an appropriate reference and so we provide the following proof.

\begin{lemma}\label{lem:regular neighbourhood}
  For every subset $S\subset \Sigma$ and every $\epsilon>0$ there is a subsurface $\Sigma'\subseteq \Sigma$ that contains $S$ and is contained in its $\epsilon$-neighbourhood.
\end{lemma}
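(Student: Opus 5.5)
The approach is to build $\Sigma'$ as a sublevel set of a smooth approximation of the distance function to $S$, and to use Sard's theorem to pick a regular value.

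First, I reduce to the case where $S$ is closed, replacing $S$ by its closure. The $\epsilon/2$-neighbourhood of $\overline S$ lies inside the $\epsilon$-neighbourhood of $S$, so it suffices to work with $\overline S$ and $\epsilon/2$.

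Next, let $f(x)=d_\Sigma(x,S)$. This function is $1$-Lipschitz and continuous. Using a locally finite partition of unity subordinate to a cover of $\Sigma$ by coordinate charts, I will produce a smooth function $g\colon\Sigma\to\R$ with $|g-f|<\epsilon/4$ everywhere. This is the standard $C^0$-approximation of continuous functions by smooth ones on manifolds, including manifolds with corners, for example by convolving in charts. By Sard's theorem applied to $g$, to $g|_{\partial\Sigma}$ restricted to each smooth boundary stratum, and to the finitely many corner strata in each chart, I can choose a value $t\in(\epsilon/4,\epsilon/2)$ that is a regular value of $g$ and of all these restrictions. Then set
\[
\Sigma'=g^{-1}\bigl((-\infty,t]\bigr).
\]

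The containments are immediate. For $x\in S$ we have $g(x)<\epsilon/4<t$, so $S\subseteq\Sigma'$. For $x\in\Sigma'$ we have $f(x)<t+\epsilon/4<\epsilon$, so $\Sigma'$ lies in the $\epsilon$-neighbourhood of $S$.

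The main obstacle is checking that $\Sigma'$ is a subsurface with boundary and corners. Away from $\partial\Sigma$, regularity of $t$ makes $g^{-1}(t)$ a properly embedded $1$-submanifold, and $\Sigma'$ is locally a half-plane there. Near $\partial\Sigma$, transversality of $g^{-1}(t)$ to the boundary strata means the level curve meets $\partial\Sigma$ transversally and avoids the corners. In a chart $[0,\infty)^2$ this makes $\Sigma'$ locally a quadrant, so new corners appear only where $g^{-1}(t)$ meets $\partial\Sigma$. Local finiteness of these corners follows because $g^{-1}(t)$ is a closed embedded $1$-manifold and the set where it crosses the boundary is discrete. Since $\Sigma'$ is closed, its frontier $g^{-1}(t)$ consists of piecewise smooth simple curves, which is the property the paper wants. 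If the paper's notion of subsurface also requires connectedness, I would replace $\Sigma'$ by its components meeting $S$ (or handle them componentwise). Should the corner-transversality argument become awkward, a fallback is to first enlarge $\Sigma$ slightly past its boundary (doubling it) and then intersect with $\Sigma$.
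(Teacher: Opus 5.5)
Your argument is correct, but it takes a genuinely different route from the paper's. The paper builds $\Sigma'$ locally. It covers $S$ by a locally finite family of small coordinate (half-, quarter-)discs $B^{t_n}_{s_n}$ centred at points of $S$. It then uses local finiteness to choose the radii $t_n$ so that the boundary circles meet only generically (never tangentially), and takes the union.

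You build $\Sigma'$ globally instead, as a regular sublevel set of a smooth $C^0$-approximation of $d_\Sigma(\cdot,S)$, with Sard applied to each stratum of $\Sigma$. Your route packages all the genericity into the single choice of $t$. Transversality to $\partial\Sigma$ and avoidance of corners come for free, and there is no inductive choice over an infinite cover. Moreover, the frontier $g^{-1}(t)$ is a smooth properly embedded $1$-manifold, so $\Sigma'$ has corners only on $\partial\Sigma$; the union of discs has corners wherever two circles cross.

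The paper's route is more elementary (no smoothing, no Sard) and more explicit. For bounded $S$ it gives a finite union of small discs, so compactness, which the paper needs for $N_0$, $N_1$ and $\Sigma_{\{x,y\}}$, is immediate. In your version you should add that $\Sigma'$ is closed, and bounded when $S$ is, hence compact by completeness.

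Finally, your reduction to $\overline S$ is unnecessary: $d_\Sigma(\cdot,S)=d_\Sigma(\cdot,\overline S)$, and your estimate $f<t+\epsilon/4<\epsilon$ already gives containment in the $\epsilon$-neighbourhood of $S$ itself.
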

\begin{proof}[Sketch of proof]
  For every $s\in S$ there is a small neighbourhood $B_s\subset B(s;\epsilon)$ that is diffeomorphic to the radius-one ball $B(0;1)$ in $\mathbb R^2$, or $[0,\infty)\times\mathbb R$, or $[0,\infty)\times[0,\infty)$. For $t\leq1$, let $B^t_s\subseteq B_s$ be the subset corresponding to $B(0;t)$ via this diffeomorphism. Since $\Sigma$ is locally compact, there is a locally finite set $\{s_n\mid n\in \mathbb N\}\subseteq S$  such that $S\subseteq \bigcup_{\mathbb N}B_{s_n}^{1/2}$. 
  We may then exploit the local finiteness of $\{s_n\mid n\in \mathbb N\}$ to choose radii $1/2 <t_n< \epsilon$ so that the closures of the neighbourhoods $B_{s_n}^{t_n}$ may only intersect in a generic way. That is, the boundary curves of the $B_{s_n}^{t_n}$'s are never tangent to one another. The union $\bigcup_{n\in\mathbb N}B_{s_n}^{t_n}$ is then a surface as required.
\end{proof}

\subsection{Graphs and triangulations}\label{ssec: grapsh and triangulations}
A \emph{graph} is a pair $G=(V,E)$ where $V$ is a set, called the \emph{vertices}, and $E$ is a multiset of non-empty subsets of $V$ with at most two elements, called the \emph{edges}. In other words, our graphs may have \emph{loops} (edges with a single end-vertex) and several \emph{parallel} edges with the same pair of end-vertices. We can consider $G$ as a topological space by turning it into an 1-complex, i.e.\ by replacing each edge by a homeomorph of a unit interval with the same end-points.  

A \emph{metric graph} is a connected graph $ G$ together with an assignment of positive lengths to each edge. 
Such a graph is a metric space when equipped with the natural path metric.
A metric graph $G$ is \emph{locally finite} if it is proper as a metric space; equivalently, if every ball of finite radius meets finitely many edges.

An embedding of a graph $ G$ in a surface $\Sigma$ is a topological embedding of the corresponding $1$-complex. In the presence of an embedding, we will sometime abuse notation and treat $ G$ as a subset of $\Sigma$, and say that $ G\subset \Sigma$ is an an \emph{embedded graph}.
A \emph{face} of an embedded graph $G$ is a connected component of $\Sigma\smallsetminus G$. The \emph{intrinsic metric} of a face $F$ is the length metric $d_\ell^F$ inherited from $\Sigma$ via \eqref{dell}, and the \emph{intrinsic diameter} is the diameter with respect to the intrinsic metric. Note that the intrinsic metric may be arbitrarily larger than the restriction of the metric of $\Sigma$ to $F$, since there may be shortcuts via $\Sigma - F$.

\begin{remark}\label{rmk:metrizing embedded graphs}
  If $G\subset \Sigma$ is an embedded graph such that every edge $e\in E(G)$ is a rectifiable curve, then  $G$ can be made into a metric graph by assigning to every $e\in E(G)$ its length $\abs{e}$ in $(\Sigma,d_\Sigma)$. The arc-length parametrizations of the edges then define a $1$-Lipschitz embedding $G\hookrightarrow\Sigma$. In particular, the difficult part in finding embeddings that are quasi-isometric is the uniform lower bound on the metric distortion \eqref{quasiisom:1}.
\end{remark}

\begin{remark}
  Let $ G$ be a metric graph and $\sigma\colon  G\to\Sigma$ an embedding that is also Lipschitz. If the family of edges $\{\sigma(e)\mid e\in E( G)\}$ is \emph{locally finite} (\emph{i.e.}\ for every ball of finite radius $B\subseteq \Sigma$ there are only finitely many $e\in E( G)$ with $\sigma(e)\cap B\neq \emptyset$), then $ G$ is locally finite as a metric graph. If $\sigma$ is also a quasi-isometric embedding, then the converse holds as well.
\end{remark}

A \emph{cell decomposition} of $\Sigma$ is a locally finite embedded graph $G\subset \Sigma$ such that $\partial \Sigma\subseteq G$ and every face $F$ is homeomorphic to the unit open ball $B_1\subset\mathbb R^2$ via a homeomorphism $B_1\to\Sigma$ that extends to a continuous map of the  disc $\mathbb D^2=\overline{B_1}$ mapping $\partial \mathbb D^2$ into $G$ (the image of $\partial \mathbb D^2$ is $\partial F$).
This mapping defines a \emph{boundary path} in $G$, and the \emph{boundary size} of $F$ is the number of edges in its boundary path, counted with multiplicity. We say that $F$ is a \emph{$n$-gon} if it has boundary size $n$. A $n$-gon $F$ is \emph{non-degenerate} if the boundary path is a simple closed curve (i.e.\ it consists of $n$ distinct vertices and edges), otherwise it is \emph{degenerate}. A cell decomposition is a \emph{3-gonal decomposition} if every face is a $3$-gon. By a \emph{triangulation} we mean a $3$-gonal decomposition where every face is non-degenerate.

\begin{remark}
  3-gonal decomposition are called \emph{pseudo-triangulations} in \cite{Maillot}. We preferred to use the former because pseudo-triangulations of surfaces have a different meaning in computational geometry. In Hatcher's notation \cite{hatcher}, choosing a $3$-gonal decomposition would be equivalent to realizing $\Sigma$ as a (unordered) $\Delta$-complex. Other authors would call this a ``generalized triangulation'', or even a  ``triangulation''.
\end{remark}

\begin{remark}\label{rmk:barycentric subdivision of 3-gonal}
  If $G\subset \Sigma$ is a 3-gonal decomposition, we can realize the barycentric subdivision $G'$ as an embedded graph $G\subset G'\subset \Sigma$, so that $G'$ is a (simplicial) triangulation of $\Sigma$. Moreover, if $G$ is a metric graph such that the embedding $G\hookrightarrow \Sigma$ is a quasi-isometry, then we can achieve that the same holds for $G'\hookrightarrow \Sigma$.
\end{remark}

The following simple lemma converts any cell decomposition $G\subset\Sigma$ where every face has boundary size at most three into one where every face has boundary size exactly three. This is going to be very convenient in the sequel. However, when working with surfaces with boundary, we will need to assume that the intrinsic metric of the boundary is well-behaved with respect to the metric of the surface. That is, we need that $(\Sigma,d_\Sigma)$ satisfies the following:

\labtequ{comparable metric}{$d_\ell^{\partial\Sigma}$ and $d_\Sigma|_{\partial\Sigma}$ are \emph{locally Lipschitz-equivalent} metrics. That is, for every $R>0$ there is some $M_R\geq 1$ such that $d_\ell^{\partial\Sigma}(x,y)\leq M_R d_\Sigma(x,y)$ for every $x,y\in \partial\Sigma$ with $d_\Sigma(x,y)\leq R$ that belong to the same connected component of $\partial\Sigma$. 
}
(This condition is obviously vacuous if $\partial\Sigma =\emptyset$.)

\begin{lem}\label{lem: removing 2-gons}
  Given a cell decomposition $G\subset\Sigma$ where every face has boundary of size at most three and $\abs{V(G)}\geq 3$, there is a subgraph $H\subset G$ that defines a $3$-gonal decomposition of $\Sigma$. Moreover, if $\Sigma$ satisfies \eqref{comparable metric}, the following metric statements hold:
  \begin{enumerate}
    \item \label{ms i} if $G$ is a metric graph with edges of bounded length and $G\hookrightarrow \Sigma$ is a quasi-isometric embedding, then $H\hookrightarrow \Sigma$ is a quasi-isometric embedding as well;
    \item \label{ms ii} if each face of $G$ has intrinsic diameter at most $R$, then each face of  $H$ has intrinsic diameter at most $3R$.
  \end{enumerate}
\end{lem}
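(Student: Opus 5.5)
The idea is to obtain $H$ by deleting edges of $G$ so as to eliminate every face of boundary size one or two, while never creating faces of size larger than three. Faces of size at most two are $1$-gons, bounded by a loop, or $2$-gons, bounded by two parallel edges or by a single edge traversed twice. The configuration where a single edge is traversed twice can only occur when the face is bounded by a single edge, which would force the surface to be a sphere with at most two vertices; this is excluded by $\abs{V(G)}\geq 3$, and I would check that the same hypothesis also excludes faces of size $\le 2$ that bound closed surfaces degenerately.

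So each small face is either a disc bounded by a loop $e$, or a disc bounded by two parallel edges $e,e'$. In the second case I delete one of the two edges, say $e'$. The face merges with the face $F'$ on the other side of $e'$, and the new face has the same boundary size as $F'$ (the edge $e'$ is replaced by $e$). Moreover it is still an open disc, being the union of two discs glued along an arc. Here I should check that $F'\neq F$; this holds because otherwise the two edges would bound everything and the whole surface would be a sphere with two vertices.

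For a loop $e$ bounding a $1$-gon $F$, I delete $e$ together with the interior of $F$. The neighbouring face $F'$ has $e$ as one of its at most three boundary edges, so after the merge its boundary size drops by one. Iterating therefore terminates locally. To handle infinitely many faces simultaneously, I would use the local finiteness of $G$. I would group the faces into maximal "stacks": chains of nested $1$- or $2$-gons that share parallel edges or loops between a fixed pair of vertices (or a fixed vertex). Within each stack I keep exactly one edge per pair of vertices, chosen as the outermost one. Local finiteness makes these choices independent and finite in each ball, so the result is again a cell decomposition. Boundary edges in $\partial\Sigma$ must be kept; a small face always has an edge not in $\partial\Sigma$ that can be removed instead, unless $\Sigma$ itself is a small disc, which is again excluded by the vertex count. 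Once no face has size $\le2$, every face is a $3$-gon.

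For the metric statements, I would take the sizes of all faces of $H$ to be $3$ and describe each face of $H$ as a $3$-gon of $G$ with some $1$- and $2$-gons of $G$ glued along its edges. For (\ref{ms ii}), take a point $x$ in such a face. Either $x$ lies in the original $3$-gon, or it lies in a stack of glued $2$-gons attached along one side. The latter are all nested between two parallel edges between the same endpoints, and each has intrinsic diameter at most $R$. A path from $x$ moves within its $2$-gon to a boundary edge, which is within distance $R$ of the $3$-gon. Chaining at most three such pieces gives $3R$.

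This step is where I expect the main difficulty. An unbounded nest of $2$-gons could a priori increase the diameter. I would avoid this by choosing, inside a stack, to keep an edge so that every $2$-gon is adjacent to the surviving face. Alternatively, I would use the fact that all points of a stack between $u$ and $v$ lie within $R$ of the edges through $u$ and $v$, and connect through the shared endpoints $u$ or $v$; this gives distances at most $R+R+R$.

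For (\ref{ms i}), deleting edges only increases $d_\ell^H$ compared with $d_\ell^G$ on $V(G)$. However, each deleted parallel edge $e'$ has a surviving parallel $e$ with the same endpoints, and its length is bounded by the uniform edge bound $L$. Hence on vertices $d_\ell^H\le d_\ell^G+$ a multiplicative factor. More precisely, replacing every deleted edge in a path by its survivor multiplies length by at most $L/\min$, which is problematic since there is no lower bound on lengths. I would instead compare a path of $G$ with $k$ edges against its length. Points on deleted loops are within $L$ of $H$. Condition \eqref{comparable metric} is needed only if boundary edges are forced to be kept, to bound detours along $\partial\Sigma$. The outcome is that $H\hookrightarrow\Sigma$ is still a quasi-isometric embedding.
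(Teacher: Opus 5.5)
Your argument for \eqref{ms i} has a genuine gap, and it sits in the construction rather than in the estimate. Which parallel edge survives has to be decided by length, and your rule is not. ``Keep the outermost'' does not even determine a choice for a single $2$-gon, and for three parallel edges with the shortest in the middle it forces you to keep a longer one. With a bad choice \eqref{ms i} is simply false. Take a straight-line triangulation of a large square in $\R^2$ with mesh $\delta$, and double every edge $s$ by a parallel edge $t$ of length $1$ that zigzags in a thin neighbourhood of $s$, so that $s\cup t$ bounds a $2$-gon. All faces have size at most three, and thanks to the straight edges $G\hookrightarrow\R^2$ is a quasi-isometric embedding with constants independent of $\delta$. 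If $H$ keeps the zigzags, every edge of $H$ has length $1$ but Euclidean displacement of order $\delta$. So two vertices at Euclidean distance $1/2$ are at $H$-distance of order $1/\delta$, and placing such squares with $\delta\to0$ in one plane destroys any uniform constants. Your closing sentences (``multiplies length by at most $L/\min$ \dots\ problematic'', ``compare a path of $G$ with $k$ edges against its length'') identify exactly this obstruction but do not overcome it. The conclusion of \eqref{ms i} is therefore only asserted.

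The missing idea, which is what the paper uses, is to delete the \emph{longer} of two internal parallel edges, and the internal one when the other lies in $\partial\Sigma$. Deleted loops never matter for distances. A deleted non-loop edge $e'$ with endpoints $u,v$ falls into one of two cases:
\begin{enumerate}
\item It is replaced by a parallel edge of length at most $\abs{e'}$. That edge is in turn kept or replaced in the same way, and local finiteness makes such chains finite.
\item Otherwise $u,v$ lie on the same component of $\partial\Sigma\subseteq H$. Then \eqref{comparable metric} gives $d_H(u,v)\le d^{\partial\Sigma}_\ell(u,v)\le M_L\,d_\Sigma(u,v)\le M_L\abs{e'}$, where $L$ is the edge-length bound.
\end{enumerate}
Substituting edge by edge in $G$-paths yields $d_G\le d_H\le M_L\,d_G$ on $V(G)$, which gives \eqref{ms i}.

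The rest of your plan is fine in substance. Your second argument for \eqref{ms ii} is the paper's: each $G$-face absorbed into a face of $H$ has all its vertices among the three vertices $v_1,v_2,v_3$ of that face. Hence these vertices are pairwise $R$-close and $R$-dense, whatever the depth of nesting. Your ``stacks'' should be replaced by the paper's bookkeeping: enumerate $V(G)$, and at step $i$ delete edges at $v_i$ until no face of size $\le 2$ is incident with $v_i$. This matters because deleting a loop can turn a $3$-gon into a new $2$-gon and fuse stacks you had fixed in advance.
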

\begin{proof}  
  We proceed by removing parallel edges and loops bounding faces until no more are left. To do so, we enumerate $V(G)$ as $v_1, v_2, \ldots$, and perform the following procedure for $i=1,2,\ldots$, inductively assuming that after step $i-1$ each face incident with $v_1,\ldots, v_{i-1}$ has boundary of size at least three.

  Note that an edge $e$ is contained in only one face with multiplicity $1$ if and only if $e\subseteq \partial\Sigma$. Otherwise it belongs with multiplicity $1$ to two distinct faces or with multiplicity $2$ to one single face.
  
  We would like to remove all the \emph{bad} faces with boundary of size 0, 1 or 2; these can come in the following six types:
  \begin{enumerate}
    \item \label{0-gon} 0-gons ($\partial F$ is one vertex);
    \item \label{1-gon} 1-gons;
    \item \label{2-single-loop} 2-gons traversing a single loop twice;
    \item \label{2-loop} 2-gons traversing two distinct loops on the same vertex;
    \item \label{2-single-edge} 2-gons traversing a single edge in both directions;
    \item \label{2-gon} 2-gons on two distinct, parallel, edges.
  \end{enumerate}
  We start by observing that we will never meet faces of type \eqref{0-gon}, \eqref{2-single-loop} or \eqref{2-single-edge}, because each of those cases implies that $\overline F=\Sigma$, and hence $\abs{V(G)}\leq 2$ (cases \eqref{0-gon} and \eqref{2-single-edge} only happen if $\Sigma = \mathbb S^2$, while \eqref{2-single-loop} implies that $\Sigma = \mathbb PR^2$).
  
  We may now describe the inductive step. Suppose there is a bad face $F$ incident with $v_i$. It must be of type \eqref{1-gon}, \eqref{2-loop}, or \eqref{2-gon}.
  If $F$ is of type \eqref{1-gon}, we simply remove the loop $e$ contained in $\partial F$. Since $e$ is only traversed once in $\partial F$, by removing it we are merging $F$ with another face $F'$ having $e$ on its boundary ($e$ cannot be a boundary edge, as this would imply that $\Sigma=\overline F$ is a disk and $\abs{V(G)}=1$). This results in a face $F''$ that is still homeomorphic to an open ball, and has smaller boundary size than $F'$. This $F''$ is not a $3$-gon yet, but since it is still incident with $v_i$ it will be dealt with in a subsequent step of this process.

  If $F$ is of type \eqref{2-loop} or \eqref{2-gon}, let $e,e'$ be the two parallel edges in $\partial F$. As before, it cannot be that both of them are boundary edges, because this would imply $\abs{V(G)}\leq 2$. We are going to remove one of the non-boundary edges, say $e'$. If both $e$ and $e'$ are internal edges, we can assume that $e'$ is the longest one. As before, this merges $F$ with another face $F'$ having $e'$ on its boundary. Note that the resulting face is an open ball that has the same boundary size as $F'$.

  We repeat this procedure until there are no more bad faces incident with $v_i$. Note that eventually we must stop, because $G$ is locally finite and hence $v_i$ is incident with only finitely many edges, and hence faces.
  At that point all faces incident with $v_i$ (including those incident with any $v_j$, with $j<i$) are 3-gons.
  As the process goes through all $i$, it converges to a subgraph $H\subseteq G$ which is a $3$-gonal decomposition of $\Sigma$.

  Our two metric statements follow easily: for \eqref{ms i} we claim that there exists $M\geq 1$ such that $d_G(v_i,v_j)\leq d_{H}(v_i,v_j)\leq Md_G(v_i,v_j)$ for any choice of vertices $v_i,v_j$. Indeed, the leftmost inequality is obvious . For the rightmost inequality it suffices to observe that to obtain $H$ from $G$ we have:
  \begin{enumerate}
    \item removed loops;
    \item removed non-loop edges $e'$ leaving there a shorter parallel edge $e$;
    \item removed non-loop edges $e'$ connecting two vertices in a connected component of $\partial \Sigma$.
  \end{enumerate}
  The first two operations do not increase the path-distance between $v_i$ and $v_j$. The third operation may increase it, but by a bounded amount, because $\partial\Sigma$ is contained in $H$ and \eqref{comparable metric} ensures that the $d_\ell^{\partial\Sigma}$-distance between the endpoints of $e'$ is bounded by $M_R\abs{e'}$, where $R$ is an upper bound on the length of the edges in $G$.

  For \eqref{ms ii}, observe that a face $F$ of $H$ with vertices $v_1,v_2,v_3$ is obtained by glueing (finitely many) faces $F_i$ of $G$ whose vertex-sets are contained in $\{v_1,v_2,v_3\}$. Since all the $F_i$ have intrinsic diameter at most $R$, we deduce that $d(v_i,v_j)\leq R$ for every $i,j=1,2,3$ and that $\{v_1,v_2,v_3\}$ is $R$-dense in $F$. Our claim now follows from the triangle inequality.
\end{proof}

\subsection{Resolving intersections of curves}
We will later need to control how curves in a surface intersect. The following lemma allows as to push certain intersecting, but non-crossing, curves away from each other by taking a small perturbation in their neighbourhood as depicted in \cref{fig:tree intersection}. 

\begin{lemma}\label{lem: resolve intersections}
  Let $\Sigma$ be a complete Riemannian surface, possibly with boundary and corners, and let $T\subset \Sigma$ be an embedded finite rooted tree consisting of piecewise smooth curves of finite length. Let $o$ be the root and $\{p_i\mid i=1,\ldots n\}$ be the leaves. Then for every $\epsilon>0$ we may find piecewise smooth curves $\gamma_i$ joining $p_i$ to $o$ such that:
  \begin{enumerate}
    \item distinct $\gamma_i$ only meet at $o$;
    \item $\bigcup_i\gamma_i$ is within Hausdorff distance $\epsilon$ from $T$;
    \item $\abs{\abs{\gamma_i} - d_T(x_i,o)}\leq \epsilon$;
    \item \label{item:boundary condition} the curves $\gamma_i$ can only meet $\partial \Sigma$ at $o$.
  \end{enumerate}
  Moreover, if $T \cap \partial \Sigma$ consists of leaves and (possibly) the root, we may also replace \eqref{item:boundary condition} by 
  \begin{enumerate}
    \item[(4')] $(\bigcup_i\gamma_i)\cap\partial\Sigma = T\cap \partial \Sigma$.
  \end{enumerate}
\end{lemma}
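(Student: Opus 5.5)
We build the curves $\gamma_i$ by induction on the tree, "fattening" $T$ into a thin ribbon and routing the $\gamma_i$ as parallel strands inside it. The first step is a normalisation. Since $T$ is finite and consists of piecewise smooth curves, after subdividing edges we may assume every edge is a smooth embedded arc up to its endpoints. Moreover, at each vertex $v$ the incident edges have well-defined unit tangent directions. If two edges at $v$ are tangent, we first perturb them slightly near $v$ so that they leave $v$ in distinct directions. This changes lengths and Hausdorff distance by an arbitrarily small amount, and keeps embeddedness if the perturbation is supported in a small enough ball around $v$ away from the rest of $T$.

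Next choose $\delta\ll\epsilon$ so small that the following hold:
\begin{itemize}
\item the closed $\delta$-balls around the vertices of $T$ are disjoint, lie in coordinate charts (half-plane or quadrant charts at boundary points), and meet $T$ only in initial segments of the incident edges;
\item outside these balls, each edge $e$ has a tubular neighbourhood $N_e\cong e\times[-1,1]$ of width $\le\delta$, via the normal exponential map;
\item these tubes are pairwise disjoint away from the vertex balls.
\end{itemize}
This is possible by compactness of $T$ and smoothness of the edges.

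Orient everything towards $o$. For each edge $e$ let $k(e)$ be the number of leaves below $e$. Inside $N_e$ take $k(e)$ parallel curves $e\times\{t_j\}$ with distinct $t_j\in(-1,1)$. Inside the vertex ball at $v\neq o$, the strands arriving from the child edges must be joined to the strands of the parent edge. Since the child edges leave $v$ in distinct directions, their cyclic order around $v$ determines a planar, non-crossing way to connect the incoming strands to the outgoing ones inside the disc. Concretely, in the chart the boundary circle meets the strand bundles in disjoint arcs, and a non-crossing perfect matching between the incoming and outgoing endpoints (respecting cyclic order) is realised by disjoint smooth arcs. At the root $o$ all strands converge to $o$ along distinct directions. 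At a leaf $p_i$ there is one strand, which we simply start at $p_i$.

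Each $\gamma_i$ follows the path from $p_i$ to $o$ in $T$ at distance $\le\delta$. Its length differs from $d_T(p_i,o)$ by at most a constant (depending on the curvature of the edges and the number of vertices) times $\delta$. Indeed, parallel curves at normal distance $t$ have length $\int(1+O(t))$, and the vertex detours have length $O(\delta)$. Taking $\delta$ small gives (2) and (3), and disjointness gives (1).

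For the boundary conditions, the key point is that at points of $T\cap\partial\Sigma$ other than $o$ we push strands into the interior. In a boundary chart, the tube of an edge lying along $\partial\Sigma$ is taken to be one-sided, $e\times(0,1]$. Vertex discs at boundary points are half-discs or quadrant sectors in which the non-crossing matching is still realisable, since the boundary arc of the sector is a segment. This gives (4). For (4'), leaves on $\partial\Sigma$ must be kept as endpoints: the strand starts at $p_i$ and immediately enters the interior, which is possible because near $p_i$ the only other part of $T$ is the single edge at $p_i$.

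The main obstacle is making the vertex-ball step rigorous. We need the strand bundles to enter each disc along disjoint boundary arcs in the correct cyclic order, and the joining arcs to be smooth and short. The initial perturbation to transverse directions is exactly what guarantees this; handling tangencies and corner points of $\partial\Sigma$ is the delicate case.
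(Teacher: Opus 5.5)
Your proposal is correct and is essentially the paper's argument: you push each edge off itself with the normal exponential map into as many nearby copies as there are leaf-to-root paths through it, then reconnect these copies inside small balls around the vertices. Your non-crossing (rainbow) matching spells out the joining step the paper only sketches, while the paper uses a one-sided ``fan'' variation $\exp(stV(t))$ pinned at the root-ward endpoint instead of two-sided tubes. One point to tighten: a one-sided tube is needed for every edge that touches $\partial\Sigma$, not only those lying along it (the paper arranges this by subdividing so each edge meets $\partial\Sigma$ on one side only and always pushing towards the interior).
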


\medskip
The proof of this is one of the few occasions in the paper where the Riemannian structure is helpful.

\begin{proof}[Proof of \cref{lem: resolve intersections}]
  Orient the edges of $T$ so that they point away from $o$. Subdividing edges in $T$ if necessary, we may assume that every edge $e\subseteq T$ is smooth. Moreover, we may assume that $e$ only meets $\partial \Sigma$ on one side, so that we can construct a nearby path completely contained in the interior of $\Sigma$ by ``pushing it'' in the other direction.
  Formally, this can be done by appropriately choosing a smooth variation of $e$.  
  Say that $e$ is given a constant speed parametrization $e\colon [0,1]\to \Sigma$, with $e(0)$ closer to the root $o\in T$. We consider a unit vector field $V(t)$ along $e$ that is perpendicular to $\dot e(t)$ and points towards the interior of $\Sigma$.
  Fix a very small $\epsilon'>0$ (to be determined later).
  For every $t\in [0,1-\epsilon']$, if $s$ is small enough, $\exp(stV(t))$ is a point in $\Sigma$.\footnote{
    We only define the variation on $(0,1-\epsilon']$ to avoid difficulties if $e(1)$ is a sharp corner in $\partial \Sigma$ and to ensure that different edges give rise to disjoint variations.
  }
  It follows that if we fix $0<\delta\leq \epsilon'$ small enough, the mapping $(s,t)\mapsto \exp(stV(t))$ defines a smooth variation $E\colon [0,1-\epsilon']\times[0,\delta)\to \Sigma$ (\cite{doCarmo}*{Chapter 5-4}) such that if we let $e_s(t)\coloneqq E(t,s)$ then:
  \begin{itemize}
    \item $E$ is a smooth embedding on $(0,1-\epsilon']\times[0,\delta)$;
    \item  $e(t)=e_0(t)$ for every $t\in[0,1-\epsilon']$;
    \item  $E(0,s)=e(0)$ for every $s\in [0,\delta)$;
    \item $e_s$ is at Hausdorff distance at most $2\epsilon'$ from $e$;
    \item $\abs{\abs{e_s} - \abs{e}}<2\epsilon'$.
  \end{itemize}

  \begin{figure}
    \begin{tikzpicture}[y=1cm, x=1cm, yscale=\globalscale,xscale=\globalscale, every node/.append style={scale=1}, inner sep=0pt, outer sep=0pt]

      \path[draw=black!50,line cap=butt,line join=miter,miter limit=4.0] (4.1817, 28.6422) -- (4.0585, 28.626).. controls (4.0585, 28.626) and (4.1421, 28.1658) .. (4.2985, 28.0258).. controls (4.4423, 27.804) and (5.0869, 27.6316) .. (5.0869, 27.6316) -- (5.1243, 27.7845)(4.045, 26.649) -- (3.8765, 26.5963).. controls (3.8765, 26.5963) and (3.6935, 27.0927) .. (3.7447, 27.489).. controls (4.0497, 27.5736) and (4.3162, 27.8546) .. (4.3162, 27.8546) -- (4.2077, 27.947)(2.6611, 26.7781) -- (2.519, 26.8791).. controls (2.6199, 26.999) and (2.79, 27.1599) .. (3.0365, 27.2636).. controls (3.3721, 27.2212) and (3.6526, 27.3102) .. (3.6526, 27.3102) -- (3.6018, 27.4523)(2.3712, 28.2121).. controls (2.7034, 28.1858) and (3.0943, 28.3557) .. (3.0943, 28.3557) -- (3.0207, 28.5086)(3.0365, 27.2636).. controls (2.936, 27.7003) and (2.5853, 28.1631) .. (2.5853, 28.1631) -- (2.4617, 28.0773);

      \path[draw=black, densely dotted,miter limit=4.0] (4.5549, 28.0239)arc(0.0:90.0:0.2526 and -0.2526)arc(90.0:180.0:0.2526 and -0.2526)arc(180.0:270.0:0.2526 and -0.2526)arc(270.0:360.0:0.2526 and -0.2526) -- cycle(4.0047, 27.4801)arc(0.0:90.0:0.2526 and -0.2526)arc(90.0:180.0:0.2526 and -0.2526)arc(180.0:270.0:0.2526 and -0.2526)arc(270.0:360.0:0.2526 and -0.2526) -- cycle(3.2882, 27.2562)arc(0.0:90.0:0.2526 and -0.2526)arc(90.0:180.0:0.2526 and -0.2526)arc(180.0:270.0:0.2526 and -0.2526)arc(270.0:360.0:0.2526 and -0.2526) -- cycle(2.6238, 28.2121)arc(0.0:90.0:0.2526 and -0.2526)arc(90.0:180.0:0.2526 and -0.2526)arc(180.0:270.0:0.2526 and -0.2526)arc(-90.0:0.0:0.2526 and -0.2526) -- cycle;

      \path[draw=blue,line cap=butt,line join=miter,miter limit=4.0] (3.9526, 26.6236).. controls (3.8923, 26.805) and (3.8319, 26.9861) .. (3.7883, 27.231).. controls (3.7686, 27.314) and (3.6595, 27.3145) .. (3.5731, 27.3036).. controls (3.396, 27.2771) and (3.216, 27.2723) .. (3.0365, 27.2636)(5.1037, 27.7058).. controls (4.9092, 27.7515) and (4.7148, 27.8236) .. (4.5204, 27.8974).. controls (4.4051, 27.9259) and (4.3554, 27.9204) .. (4.1963, 27.7933).. controls (4.1594, 27.76) and (4.0456, 27.6723) .. (3.9668, 27.6135).. controls (3.9098, 27.5692) and (3.8442, 27.4669) .. (3.7948, 27.4332).. controls (3.723, 27.3843) and (3.6631, 27.3702) .. (3.5406, 27.3391).. controls (3.3799, 27.3026) and (3.2065, 27.2858) .. (3.0365, 27.2636)(4.1203, 28.6377).. controls (4.137, 28.52) and (4.1681, 28.3923) .. (4.2061, 28.2597).. controls (4.247, 28.0986) and (4.2684, 27.9334) .. (4.1562, 27.8183).. controls (4.083, 27.7484) and (4.0084, 27.6856) .. (3.9494, 27.6379).. controls (3.8853, 27.5808) and (3.8261, 27.5031) .. (3.7741, 27.4636).. controls (3.7188, 27.4216) and (3.643, 27.4096) .. (3.5233, 27.3728).. controls (3.3651, 27.3256) and (3.1997, 27.2975) .. (3.0365, 27.2636)(2.584, 26.8275).. controls (2.7138, 26.9792) and (2.8699, 27.123) .. (3.0365, 27.2636)(3.0568, 28.425).. controls (2.9286, 28.37) and (2.8031, 28.3145) .. (2.6213, 28.2707).. controls (2.5236, 28.2449) and (2.5308, 28.1168) .. (2.572, 28.0629).. controls (2.76, 27.8085) and (2.8919, 27.5338) .. (3.0365, 27.2636);

      \path[draw=black,line cap=butt,line join=miter] (4.2985, 28.0258) -- (5.2671, 27.7365) (3.7447, 27.489) -- (4.0812, 26.5365)(3.0365, 27.2636) -- node[pos=0.5,yshift=15]{$T$}(3.7447, 27.489) -- (4.2985, 28.0258) -- (4.1665, 28.7749)(3.1264, 28.5554).. controls (2.5941, 28.2988) and (2.3712, 28.2121) .. (2.3712, 28.2121) -- (3.0365, 27.2636)node(root)[bnode]{} -- (2.5853, 26.6797);

      \draw (root) node[below right, yshift = -4]{$o$};

      \path[draw=black,fill=white,thick,miter limit=4.0] (1.9831, 28.4025) node[yshift=24,xshift=8]{$\partial \Sigma$} ellipse (0.4787cm and 0.3252cm);

      \path[draw=black,line cap=butt,line join=miter,thick] (3.713, 28.9769) node[yshift=4,xshift=14]{$\partial \Sigma$}.. controls (3.8015, 28.8831) and (3.9604, 28.8194) .. (4.1692, 28.7656).. controls (4.5638, 28.6639) and (4.9698, 28.7161) .. (4.9698, 28.7161).. controls (5.0344, 28.37) and (5.2394, 28.1026) .. (5.5217, 27.8787).. controls (5.4265, 27.856) and (5.3022, 27.7914) .. (5.2582, 27.7339).. controls (5.0038, 27.4016) and (5.2013, 26.8968) .. (5.3706, 26.7658);
    \end{tikzpicture}
    \caption{Transforming a rooted tree $T$ into a collection of disjoint paths meeting at the root (blue lines in the picture).}\label{fig:tree intersection}
  \end{figure}
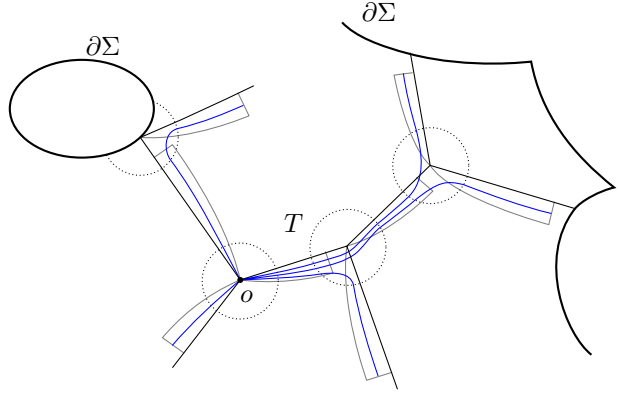

  Note that, as $s>0$ varies, the smooth curves $e_s$ only meet at $e(0)$ and are disjoint from $\partial\Sigma$ except perhaps at $e(0)$.
  Moreover, taking $\delta$ small enough we may also arrange that the curves $e_s$ and $e'_{s'}$ are disjoint as the edges $e$, $e'$ vary.

  Pick $0<s_1<\cdots < s_{n_e}$, where $n_e$ is the number of paths in $T$ going from the leaves to the root $o$ that cross $e$. As $e\in E(T)$ and $i=0,\ldots, n_e$ vary, we may now arbitrarily join up all the curves $e_{s_i}$ with appropriately chosen piecewise smooth paths contained in $2\epsilon'$-balls around the vertices of $T$ (see \cref{fig:tree intersection}). The difference in length between these new paths and the paths in $T$ is bounded by $6\abs{V(T)}\epsilon'$.
  
  The ``moreover'' part of the statement is proven by letting $s_1=0$ for every edge $e$ and judiciously joining the curves at their endpoints.
\end{proof}

\begin{remark}
  \cref{lem: resolve intersections} is the key occasion where we need the assumption that $\Sigma$ be a Riemannian surface: all the other arguments hold in much greater generality. The central property that we need for it is that every curve can be perturbed without increasing its length much. An easy example of a surface equipped with a proper geodesic metric for which this property fails was communicated to us by Dimitrios Ntalampekos. This  is constructed by equipping the plane $\mathbb R^2$ with the (discontinuous) Riemannian metric tensor obtained multiplying the Euclidean tensor by $1/2$ on the segment $[0,1]\times \{0\}$.  Then the geodesic joining $(0,0)$ to $(1,0)$ has length $1/2$, but any perturbation thereof has length at least $1$.
\end{remark}

\subsection{Arcs in minimal positions and $\epsilon$-geodesics}\label{ssec: epsilon geodesics}
Suppose that $\Sigma$ is homeomorphic to the disc $\mathbb D^2$. A curve $\gamma\colon [0,1]\to\Sigma$ is an \emph{arc} if $\{0,1\}=\gamma^{-1}(\partial \Sigma)$. A \emph{simple arc} is an arc that is a simple curve (it may be a simple closed curve meeting $\partial \Sigma$ at its endpoint).
We say that two simple arcs $\alpha$, $\beta$  are in \emph{minimal position} if $\alpha\cap\beta\smallsetminus\partial \Sigma$ consists of one point if the endpoints of $\alpha$ and $\beta$ separate each other in $\partial \Sigma\cong \mathbb S^1$, and empty otherwise (this includes the case where $\alpha$ and $\beta$ share an endpoint).

A curve $\gamma$ connecting two points $x,y\in \Sigma$ is an \emph{$\epsilon$-geodesic} if it is piecewise smooth and $\abs{\gamma} < d(x,y)+\epsilon$.

\begin{prop}\label{cor: epsilon geodesics in minimal position}
  Let $\alpha_1,\ldots, \alpha_n$ be arcs in a surface $\Sigma$ homeomorphic to $\mathbb D^2$. Then there exist piecewise smooth $\epsilon$-geodesics $\beta_i$ which are simple arcs with the same endpoints as $\alpha_i$ and are pairwise in minimal position.
\end{prop}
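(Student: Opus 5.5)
We first choose, for each $i$, a piecewise smooth $(\epsilon/2)$-geodesic $\gamma_i$ with the same endpoints as $\alpha_i$. Such curves exist by completeness and the definition of $d_\Sigma$ as an infimum of lengths of piecewise smooth curves. Next we make each $\gamma_i$ a simple arc. Whenever $\gamma_i$ self-intersects, we cut out the loop between the two visits; this only shortens it. Whenever it touches $\partial\Sigma$ at an interior time, we push it slightly into the interior, as in the proof of \cref{lem: resolve intersections}, at a length cost that can be made arbitrarily small. After a small generic perturbation (again costing arbitrarily little length) we may also assume that the $\gamma_i$ are in general position with respect to one another: they meet transversally in finitely many interior points, away from corners. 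None of this changes the endpoints.

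The key step is to remove excess intersections by exchange arguments that do not increase total length. Suppose two arcs $\gamma_i,\gamma_j$ meet in two interior points $x,y$. Let $a\subseteq\gamma_i$ and $b\subseteq\gamma_j$ be the subarcs between $x$ and $y$. Swap them: replace $a$ by $b$ in $\gamma_i$ and $b$ by $a$ in $\gamma_j$, or, if $|a|\neq|b|$, replace the longer one by the shorter in its curve. Either way, the sum $\sum_k|\gamma_k|$ does not increase, and each curve still has length less than $d+\epsilon/2$ up to the small perturbation errors we allow.

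Afterwards a small perturbation near $x$ and $y$ removes the two intersection points, and the curve made simple again by loop deletion. Since $\Sigma\cong\mathbb D^2$ is simply connected, the union $a\cup b$ bounds a bigon, so we can choose $x,y$ to be an innermost pair, meaning no other arc enters the bigon across $a$ or $b$ in a way that creates new crossings. With that choice the total number of transverse intersection points strictly decreases. It follows that after finitely many steps any two arcs meet at most once in the interior.

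Finally we must check that this is exactly minimal position. Because the disc is simply connected, two simple arcs meeting at most once meet exactly once if their endpoints separate each other in $\partial\Sigma\cong\mathbb S^1$, by the Jordan curve theorem. They also cannot meet an odd number of times if the endpoints do not separate, so in that case they are disjoint. Shared endpoints are handled by a slight perturbation near $\partial\Sigma$, as in part (4') of \cref{lem: resolve intersections}.

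The main obstacle is the bookkeeping in the exchange step. We must ensure that the swap plus smoothing really decreases the intersection count without creating new crossings with third arcs; the innermost-bigon choice is what I would use to guarantee this. We must also track that the accumulated perturbation errors stay below $\epsilon/2$, which we arrange by using geometrically decreasing perturbation sizes; there are finitely many steps since the intersection count is finite.
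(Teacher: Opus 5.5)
Your overall strategy is a legitimate alternative to the paper's, but the case of arcs with a common endpoint does not work as written. The strategy is: start from short arcs in general position, then remove excess crossings by innermost-bigon surgery, always replacing the longer side of the bigon by a push-off of the shorter one.

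\textbf{The gap.} Minimal position requires two arcs sharing an endpoint $v\in\partial\Sigma$ to be disjoint in the interior. Your parity argument gives nothing here: for such a pair, the parity of the number of interior crossings is not determined by the endpoints alone, but also by the order of the germs at $v$. Concretely, if $\gamma_i$ and $\gamma_j$ leave $v$ in the ``wrong'' order, they may cross exactly once, at a point $x$ far from $\partial\Sigma$. Your bigon step needs two interior intersection points, so it never touches this pair. No perturbation supported near $\partial\Sigma$ can remove $x$ either: re-ordering the germs at $v$ creates a new crossing near $v$ rather than destroying $x$. This is not a marginal case. In \cref{prop: triangulate polygons}, fans of diagonals at a common vertex (e.g.\ at $w_m$) are exactly what this proposition must make interior-disjoint.

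\textbf{How to repair it in your framework.} Also perform the surgery on \emph{half-bigons}, bounded by the subarcs $a\subseteq\gamma_i$ and $b\subseteq\gamma_j$ from $v$ to $x$. Replace the longer one by a push-off of the shorter one, to the side away from the half-bigon; this lowers the crossing count by one. ``Innermost'' must then mean minimal under inclusion among all bigons \emph{and} half-bigons of all pairs. Otherwise a third arc $\gamma_k$ that emanates from $v$ between $a$ and $b$ and leaves through the retained side acquires a new crossing with the modified arc, and the count does not drop. Alternatively, first re-order the germs at every boundary vertex consistently, accepting new crossings near $\partial\Sigma$, and only then run the interior bigon removal; ``at most one crossing, even parity'' then gives disjointness.

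\textbf{Making ``innermost'' precise for ordinary bigons.} The same notion (minimal under inclusion over all pairs) is what your count needs there too. It guarantees that every third arc crosses the bigon from $a$ to $b$. It also guarantees that $\gamma_i\cap b=\{x,y\}$. Hence pushing off the whole of $b$, not just perturbing near $x$ and $y$, keeps the modified arc simple.

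\textbf{Comparison with the paper.} Once repaired, your route gives cleaner length control: individual lengths never increase beyond the push-off costs. The paper's \cref{lem: resolving two arcs} needs no transversality or bigon bookkeeping. It reroutes $\beta$ along $\alpha$ between their \emph{first} and \emph{last} common points, which automatically covers a shared endpoint. The price is that errors add up as $\epsilon_\alpha+\epsilon_\beta+\delta$ over the pairwise iteration.
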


For the proof of this we will need

\begin{lem}\label{lem: resolving two arcs}
  Let $\Sigma$ be homeomorphic to $\mathbb D^2$ and $\alpha$, $\beta$ be piecewise smooth simple arcs that are $\epsilon_{\alpha}$ resp.\  $\epsilon_\beta$-geodesics. Let $\gamma_1,\ldots, \gamma_n$ be simple arcs that are in minimal position with respect to both $\alpha$ and $\beta$.
  Then for every $\delta>0$ there is a piecewise smooth simple arc $\beta'$ with the same endpoints as $\beta$ that is an $(\epsilon_{\alpha}+\epsilon_\beta+\delta)$-geodesic and is in minimal position with $\alpha$ and all the $\gamma_i$.
\end{lem}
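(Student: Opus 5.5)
We plan to build $\beta'$ by surgery on $\beta$ along $\alpha$. The key tool is the standard innermost-bigon argument, together with the observation that swapping subarcs between two $\epsilon$-geodesics changes length by a controlled amount.

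\textbf{Step 1: general position.} After an arbitrarily small piecewise smooth perturbation of $\beta$ (fixing endpoints and changing length by less than $\delta/2$), we arrange that $\beta$ meets $\alpha$ and every $\gamma_i$ transversally in finitely many interior points. We make sure this perturbation does not create new bigons with the $\gamma_i$. Since $\beta$ is already in minimal position with each $\gamma_i$, it suffices to take the perturbation supported away from $\bigcup_i\gamma_i$, or at least away from the finitely many intersection points with the $\gamma_i$.

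\textbf{Step 2: a single surgery.} If $\alpha$ and $\beta$ are not in minimal position, then, since $\Sigma\cong\mathbb D^2$ and both are simple arcs, there are two points $x,y\in\alpha\cap\beta\smallsetminus\partial\Sigma$ bounding a bigon. Concretely, the subarcs $\alpha_{xy}\subseteq\alpha$ and $\beta_{xy}\subseteq\beta$ bound a disc $D$. We may choose the bigon innermost, so that the interior of $\alpha_{xy}$ is disjoint from $\beta$. We then replace $\beta_{xy}$ by $\alpha_{xy}$ and push slightly off $\alpha$ to the side away from $D$; this uses \cref{lem: resolve intersections}, or just a normal variation as in its proof. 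This strictly reduces $\abs{\alpha\cap\beta}$.

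For lengths, we compare the modified curve $\beta^{\mathrm{new}}$ to the original $\beta$ rather than iterating. Every piece of $\beta^{\mathrm{new}}$ is either a subarc of $\beta$ or a subarc of $\alpha$. For any $\epsilon$-geodesic, each subarc is an $\epsilon$-geodesic between its endpoints. Hence $\abs{\alpha_{xy}}<d(x,y)+\epsilon_\alpha\le\abs{\beta_{xy}}+\epsilon_\alpha$, and the replacement increases length by at most $\epsilon_\alpha$.

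\textbf{Step 3: controlling the total loss.} Iterating Step 2 naively would add $\epsilon_\alpha$ at every step. To avoid this, we perform all the surgery at once. Let $x$ be the first point of $\beta$ on $\alpha$ and $y$ the last. If the endpoints of $\alpha,\beta$ separate each other, there is a single final crossing to keep; otherwise we take $\beta'$ to be the initial segment of $\beta$ up to $x$, then the subarc of $\alpha$ from $x$ to $y$, then the final segment of $\beta$ from $y$, pushed off $\alpha$ to one side. The middle portion satisfies $\abs{\alpha_{xy}}\le\abs{\beta_{xy}}+\epsilon_\alpha$, as shown in Step 2. Therefore $\abs{\beta'}\le\abs{\beta}+\epsilon_\alpha+\delta/2<d+\epsilon_\alpha+\epsilon_\beta+\delta$. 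In the separating case we route along $\alpha$ to the appropriate side near the unique needed crossing, which yields a single transverse intersection. The resulting curve is simple and meets $\alpha$ at most once.

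\textbf{Step 4: minimal position with the $\gamma_i$.} This is the main obstacle. The pieces of $\beta'$ are subarcs of $\beta$ and of $\alpha$, each of which meets $\gamma_i$ at most once. Since the union is a simple arc in a disc, we must check that it cannot form a bigon with $\gamma_i$. We would argue via separation. In a disc, a simple arc $\eta$ and an arc $\gamma$ are in minimal position if and only if $\abs{\eta\cap\gamma}$ equals the parity count determined by whether the endpoints separate. Thus it suffices to show $\abs{\beta'\cap\gamma_i}\le1$.

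Suppose $\gamma_i$ met both the $\alpha$-portion and a $\beta$-portion. Then $\gamma_i$ would cross the disc bounded by $\beta_{xy}\cup\alpha_{xy}$, entering and exiting it. Minimality with $\alpha$ and $\beta$ forbids it from crossing each of $\alpha_{xy}$ and $\beta_{xy}$ more than once. We then count crossings with the closed curve $\beta_{xy}\cup\alpha_{xy}$ (even, by Jordan), together with the crossings of the retained segments of $\beta$, to get a contradiction or an equality. If this count fails in some configuration, the fallback is to perform the surgery along the innermost bigon of $\alpha\cup\beta$ and then remove any $\gamma_i$-bigons created by a further exchange. Such an exchange changes length by at most $\delta/2$, because the $\gamma_i$ cross the pushed-off copy within a thin neighbourhood.
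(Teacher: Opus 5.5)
This is essentially the paper's proof. Your Step~3 curve (follow $\beta$ to its first point on $\alpha$, run along $\alpha$ to the last such point, resume $\beta$, then push off $\alpha$) is exactly the paper's $\beta''$, with the same $\epsilon_\alpha$ length bound. Your Step~4 parity count is the paper's separation argument for the $\gamma_i$, and Steps~1--2 are not needed.

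Step~4 as written is left hedged, but the count closes outright, so commit to it. If $\gamma_i$ meets $\alpha_{xy}$, the mod-$2$ intersection number of $\gamma_i$ with the closed curve $\alpha_{xy}\cup\beta_{xy}$ forces $\gamma_i$ to meet $\beta_{xy}$ as well. (This curve need not be simple, so Jordan is the wrong tool; mod-$2$ intersection is what applies.) By minimal position with $\beta$, $\gamma_i$ then misses the retained ends of $\beta$, giving $\abs{\beta'\cap\gamma_i}\le 1$.

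Drop the fallback, which would not work: bigons with $\gamma_i$ need not be small, and $\gamma_i$ carries no length control.
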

\begin{proof}
  If $\alpha\cap\beta\smallsetminus\partial \Sigma$ is empty there is nothing to do. Otherwise, let $\beta''$ be the arc obtained by following $\beta$ until it first meets $\alpha$, then following $\alpha$ until its last intersection with $\beta$, and then resuming to follow $\beta$ until its other end.
  This is an $(\epsilon_\alpha+\epsilon_\beta)$-geodesic that intersects $\alpha$ in one subsegment $\xi\subseteq \alpha$.

  We claim that $\beta''$ is in minimal position with each $\gamma_i$. Let $\beta''=\beta_-\cup\xi\cup\beta_+$, and $\beta=\beta_-\cup\beta_0\cup\beta_+$. If $\gamma_i\cap\xi=\emptyset$, then $\beta''\cap \gamma_i\subseteq\beta\cap \gamma_i$ and there is nothing to show. Otherwise, $\gamma_i\cap\xi=\gamma_i\cap \alpha$ consists of exactly one point and the endpoints of alpha separate those of $\gamma_i$. But then $\gamma_i$ must intersect $\beta_0$, because the path following $\alpha$ until it reaches $\beta_0$, then $\beta_0$, and then $\alpha$ again separates the endpoints of $\gamma_i$. Since $\beta$ is in minimal position with $\gamma_i$, the same is true for $\beta''$.

  We can now apply the technique of \cref{lem: resolve intersections} to ``push $\beta''$ away from $\alpha$'': we thus find an arbitrarily small variation $\beta'$ of $\beta''$ that is in minimal position with $\alpha$. This can be done so that $\abs{\beta'\cap\gamma_i}=\abs{\beta''\cap\gamma_i}$ for every $i=1,\ldots, n$.
\end{proof}

\begin{proof}[Proof of \cref{cor: epsilon geodesics in minimal position}]
  Let $\overline \alpha_i$ be geodesics in $\Sigma$ with the same endpoints of $\alpha_i$.
  To begin with, we apply \cref{lem: resolve intersections} to each $\overline \alpha_i$  separately find arbitrarily small variations thereof that only meet $\partial P$ at their endpoints. These are simple arcs that are $\delta$-geodesic for arbitrarily small $\delta>0$.
  
  We may then iteratively fix $i=1,\ldots,n-1$ and apply \cref{lem: resolving two arcs} to each pair $\overline\alpha_i$, $\overline\alpha_j$ with $j>i$ to put all the $\overline\alpha_j$  in minimal position with $\overline\alpha_i$, while maintaining it in minimal position with all the $\overline\alpha_k$ with $k<i$. Doing this for every $i$ concludes the proof.
\end{proof}

\section{Metric triangulation results}
The main technical result of this section shows that sufficiently fine embedded graphs in a Riemannian surface can be completed to quasi-isometric 3-gonal decompositions. The precise statement requires a few technical conditions, which we introduce now.

Let $\Sigma$ be a complete Riemannian surface, possibly with boundary and corners. We fix once and for all two constants:
\labtequ{parameters}{Fix (small) parameters $\theta$ and $\Theta$ with $0<\theta<\Theta/2$.}
\begin{remark}
  We could \emph{e.g.}\ fix $\theta =\Theta/3$, but we prefer to keep the parameters independent as they represent two different quantities: the density of vertices vs.\ the maximal length of edges.
\end{remark}
We also fix a ``selection of vertices'':
\labtequ{X net}{let $X\subset \Sigma$ be a locally finite subset that is $\theta$-dense and such that $X\cap \partial\Sigma$ is $\theta$-dense on each connected component of $\partial\Sigma$ with respect to the intrinsic metric (the last condition is vacuous if $\partial\Sigma=\emptyset$).}

Let $\overline G\subset \Sigma$ be a locally finite embedded graph such that $\partial \Sigma \subset \overline G$ and $X\subseteq V(\overline G)$.
We further assume that
\labtequ{nice metric graph}{when equipped with its arc-length metric, $\overline G$ has edge-lengths at most $\Theta$ and $X$ is $\Theta/2$ dense in it.}

One key extra property that we require on $\overline G$ is that it is sufficiently fine to represent homotopy classes of curves in the following sense:

\labtequ{homotopy approximation}{
  There exist constants $M,A$ such that if $\gamma\colon [0,1]\to \Sigma$ is a rectifiable curve with endpoints in $X$, then there exists a path $\bar\gamma$ in $\overline G\subset \Sigma$ such that
  \begin{enumerate}
    \item $\gamma$ and $\bar\gamma$ are homotopic within the $2\Theta$-neighbourhood of $\gamma$;
    \item 
      \(
      \abs{\bar\gamma}\leq M\abs{\gamma} + A.
      \)
  \end{enumerate}

}

We observe that this property already implies that $\overline G$ is quasi-isometric to $\Sigma$:

\begin{lemma}\label{lem: quasi isometry}
  The embedding $(G,d_\ell^G)\hookrightarrow (\Sigma, d_\Sigma)$ is a quasi-isometry.
\end{lemma}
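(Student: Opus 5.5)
We verify the two conditions of an $(M',A')$-quasi-isometry for the inclusion $\iota\colon(\overline G,d^{\overline G}_\ell)\hookrightarrow(\Sigma,d_\Sigma)$; here $G$ in the statement stands for $\overline G$. The upper bound is the easy direction. As in \cref{rmk:metrizing embedded graphs}, every path in $\overline G$ is a curve in $\Sigma$ of the same length, so $d_\Sigma(x,y)\leq d^{\overline G}_\ell(x,y)$ and $\iota$ is $1$-Lipschitz. Coarse surjectivity follows from \eqref{X net}: $X\subseteq V(\overline G)$ is $\theta$-dense in $\Sigma$, so every point of $\Sigma$ lies within $\theta$ of the image.

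The substantive step is the lower bound $d^{\overline G}_\ell(x,y)\leq M' d_\Sigma(x,y)+A'$. We first treat $x,y\in X$. Since $\Sigma$ is a complete connected length space (Riemannian, possibly with boundary), for every $\eta>0$ there is a rectifiable curve $\gamma$ from $x$ to $y$ with $\abs{\gamma}\leq d_\Sigma(x,y)+\eta$; we do not need a geodesic. Applying \eqref{homotopy approximation} to $\gamma$ yields a path $\bar\gamma$ in $\overline G$ joining $x$ to $y$ with $\abs{\bar\gamma}\leq M\abs{\gamma}+A$. The homotopy clause is not needed here, only that $\bar\gamma$ has the same endpoints. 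Therefore $d^{\overline G}_\ell(x,y)\leq M(d_\Sigma(x,y)+\eta)+A$, and letting $\eta\to0$ gives the bound on $X$.

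For arbitrary $p,q\in\overline G$ we reduce to $X$ using \eqref{nice metric graph}. Since $X$ is $\Theta/2$-dense in $(\overline G,d^{\overline G}_\ell)$, there are $x,y\in X$ with $d^{\overline G}_\ell(p,x),\,d^{\overline G}_\ell(q,y)\leq\Theta/2$. Because $\iota$ is $1$-Lipschitz, also $d_\Sigma(p,x),\,d_\Sigma(q,y)\leq\Theta/2$. The triangle inequality then gives
\[
d^{\overline G}_\ell(p,q)\leq \Theta+M\bigl(d_\Sigma(p,q)+\Theta\bigr)+A .
\]
Hence $\iota$ is an $(M',A')$-quasi-isometry with $M'=M$ and $A'=\max\{\theta,\,A+(M+1)\Theta\}$.

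We expect the only point requiring care to be the existence of near-length-minimising rectifiable curves between points of $X$. This is standard for a connected Riemannian surface with boundary and corners, since the distance $d_\Sigma$ is by definition an infimum of lengths of piecewise smooth curves. Everything else is bookkeeping of constants.
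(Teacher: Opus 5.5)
Your proof is correct and follows the paper's argument: the embedding is $1$-Lipschitz, the lower bound on $X$ comes from applying \eqref{homotopy approximation} to a curve between points of $X$, and the bound extends to all points because $X$ is coarsely dense in both $\overline G$ and $\Sigma$. The only differences are that you use near-length-minimising curves instead of a geodesic and write the constants out explicitly, and neither changes the argument.
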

\begin{proof}
  By construction, we already know that the embedding is $1$-Lipschitz. Conversely, let $x,y$ be points in $X$. Applying \eqref{homotopy approximation} to a geodesic connecting them in $\Sigma$, we deduce that
  \begin{equation}\label{eq:coarse embedding}
  (d_{\overline G}(x,y) - A)/M \leq d_\Sigma(x,y)\leq d_{\overline G}(x,y).
  \end{equation}
  The claim follows, because $X$ is coarsely dense in both $\overline G$ and $\Sigma$.
\end{proof}

A graph $\overline{ G}$ satisfying all these hypotheses can be constructed by fixing an appropriate net $X$ and ``drawing in'' paths connecting nearby points in $X$ (see \cref{ssec:construction of G} below). Once such a graph is given, we prove:

\begin{thm}\label{thm: filling triangulation}
 Given $X\subset \overline G\subset \Sigma$ as above, $\overline G$ defines a cell decomposition of $\Sigma$.
 It is possible to add edges of bounded length to $\overline G$ to obtain a locally finite cell decomposition  $G'\subset\Sigma$ such that
 \begin{enumerate}
  \item \label{G' iv} every face of $G'$ has at most three edges;
  \item \label{G' iii} the faces $F$ of $G'$ have uniformly bounded diameter with respect to their intrinsic metric;
  \item  $ G'\hookrightarrow \Sigma$ is a quasi\=/isometry. 
 \end{enumerate}
 Moreover, if $\partial \Sigma$ satisfies \eqref{comparable metric}, then we can remove edges from $G'$ to obtain a $3$-gonal decomposition $G\subset \Sigma$ satisfying the same properties.
\end{thm}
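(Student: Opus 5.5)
The argument splits into three parts. First I show that the faces of $\overline G$ are bounded open discs. Then I triangulate each face using $\epsilon$-geodesics in minimal position. Finally I apply \cref{lem: removing 2-gons}.

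\emph{Faces of $\overline G$ are discs of bounded size.} Let $F$ be a face and $p\in F$. By $\theta$-density there is $x\in X$ with $d_\Sigma(p,x)\le\theta$. Every point of $F$ is therefore close to $\overline G$, and in fact close to $X$, since $X$ is $\Theta/2$-dense in $\overline G$. Now let $c$ be a simple closed curve in $F$. Choose points along $c$ at spacing about $\theta$, join each one to a nearby point of $X$ by a short segment, and apply \eqref{homotopy approximation} to the resulting short curves between consecutive points of $X$. This homotopes $c$, through a controlled neighbourhood, into a closed path in $\overline G$. That closed path is built from bounded-length loops, each of which is null-homotopic within a $2\Theta$-neighbourhood. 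I want to conclude that $c$ bounds a disc in $\overline F$. The natural route is to show that $\pi_1$ of each end of $F$ injects into $\Sigma$ and dies near $\overline G$. Any nontrivial topology of $F$, such as genus, an essential annulus, or an end, would produce a curve far from $\overline G$ or a curve that is not null-homotopic. This contradicts density together with the homotopy condition. I expect this step to be the main obstacle. It must rule out annular faces around holes of $\Sigma$ whose boundary curves are short but which are homotopically essential. I would handle it by applying \eqref{homotopy approximation} to a curve that winds around such a face: the approximating path $\bar\gamma$ is homotopic to it within a $2\Theta$-neighbourhood, and this forces the neighbourhood of the face to contain the hole. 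So any non-disc face must contain a region that is not filled by $\overline G$, which is impossible because $\overline G$ is $\Theta$-dense. Local finiteness then gives compactness of $\overline F$, so $F$ is an open disc whose boundary path has finitely many edges. The intrinsic diameter of $F$ is also bounded by some $D=D(\theta,\Theta,M,A)$, because a face of large intrinsic diameter would contain a long boundary subpath between two points that are close in $\Sigma$, and \eqref{homotopy approximation} would shortcut it in a way that contradicts $F$ being a single face.

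\emph{Triangulating each face.} Fix a face $F$ and consider its closure as a disc $P$ (cut open along its boundary path), whose boundary has $n_F$ vertices with $n_F$ uniformly bounded, because $\overline G$ is locally finite and $\Theta$-dense and $F$ has bounded diameter. Choose a combinatorial triangulation of the $n_F$-gon by diagonals, for instance a fan from one vertex. Among all such triangulations I would take one whose diagonals join vertices at bounded $d^P_\ell$-distance, for example by greedily cutting ears of bounded intrinsic size. Apply \cref{cor: epsilon geodesics in minimal position} inside $P$ to realize the diagonals as $\epsilon$-geodesic simple arcs that are pairwise in minimal position. Since the diagonals are non-crossing, they are pairwise disjoint away from the boundary. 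They therefore cut $P$ into $3$-gons, and each new edge has length at most $D+\epsilon$. Each resulting face lies in $F$ and is bounded by three arcs of bounded length, so its intrinsic diameter is bounded. Doing this in every face gives $G'$: it is locally finite, every face has at most three edges, and the faces have uniformly bounded intrinsic diameter, which proves items \eqref{G' iv} and \eqref{G' iii}.

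\emph{Quasi-isometry and the "moreover" part.} Since $\overline G\subseteq G'$ and the embedding of $G'$ is $1$-Lipschitz by \cref{rmk:metrizing embedded graphs}, we have $d_{G'}\le d_{\overline G}$ on $X$. Combined with \cref{lem: quasi isometry} and the fact that $X$ remains coarsely dense in $G'$ (new edges have bounded length), this shows that $G'\hookrightarrow\Sigma$ is a quasi-isometry. For the final claim, apply \cref{lem: removing 2-gons} to $G'$. Its hypotheses hold: edges have bounded length and the embedding is a quasi-isometry. The lemma yields the $3$-gonal decomposition $G\subseteq G'$, which is still a quasi-isometric embedding by \eqref{ms i}. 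The faces of $G$ have intrinsic diameter at most three times that of $G'$ by \eqref{ms ii}, and $G\hookrightarrow\Sigma$ is coarsely surjective because $X\subseteq V(G)$.
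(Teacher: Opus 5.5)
Your third step (adding edges only improves the quasi-isometry bound, then apply \cref{lem: removing 2-gons}) matches the paper. The first two steps, however, have genuine gaps.

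\textbf{Faces are bounded discs.} You never actually show that the faces of $\overline G$ are bounded discs. The claim that local finiteness gives compactness of $\overline F$ is false: a locally finite graph can have unbounded faces, for example a thin infinite strip of width less than $\theta$. Such a face is simply connected and every point of it is $\theta$-close to $X$. So your dichotomy (``a curve far from $\overline G$ or a curve that is not null-homotopic'') does not exclude it. Nor does the sketch that a non-disc face ``must contain a region not filled by $\overline G$''.

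The paper needs a real crossing argument here (\cref{prop:bounded diameter}). Take a simple curve $c\subset F$ joining two points at $d_\Sigma$-distance $13\Theta$, and build nested regular neighbourhoods $N_0\subset N_1\subset N_2$ of its middle part. If $N_1\smallsetminus c_1$ is connected, take a loop at a point of $X$ meeting $c_1$ exactly once. Otherwise, take a path between points of $X$ lying in different components of $N_1\smallsetminus c_1$. In either case, applying \eqref{homotopy approximation} inside these neighbourhoods, together with a parity/separation argument, forces an edge of $\overline G$ to cross $c$. Only then does the disc property follow. It comes from \cref{lem:paths null-homotopic in the quotient}: a path in $\widehat F$ with endpoints on $\partial\widehat F$ becomes null-homotopic in $\widehat F/\partial\widehat F\cong\Sigma/(\Sigma\smallsetminus F)$ once it is homotoped into $\overline G$. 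Combine this with \cref{lem:recognize disks}.

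\textbf{Triangulating each face.} This step rests on two claims that are false under the theorem's hypotheses.
\begin{enumerate}
\item That $n_F$ is uniformly bounded. It is not, because $X$ is only locally finite, not uniform.
\item That faces have intrinsic diameter at most some $D(\theta,\Theta,M,A)$. Here is a counterexample. Take a fine grid in $\R^2$. Inside disjoint discs of radius $\Theta/4$, replace the grid by an accordion-shaped corridor face with $N_k\to\infty$ folds, and subdivide the pockets between the folds. Any two nearby vertices are joined by a graph path of length $O(\Theta)$ that leaves its pocket and goes around the small disc. Hence \eqref{homotopy approximation} holds with uniform $M,A$, while the corridor faces have intrinsic diameter of order $N_k\Theta$.
\end{enumerate}
Your ``shortcut'' argument only shows that one side of a short chord lies inside a short loop. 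That bounds extrinsic size, not intrinsic size, which is exactly why the paper remarks that \cref{prop:bounded diameter} does not control intrinsic diameter.

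Without $D$, ``greedily cutting ears of bounded intrinsic size'' is unjustified. The lengths of successive ear diagonals can compound. Moreover, for non-thin polygons (e.g.\ a round disc with many boundary vertices) no bounded-length triangulation by diagonals exists at all. Yet you never use thinness. The missing ingredient is precisely the $\theta$-thinness of $\widehat F$, exploited in \cref{prop: triangulate polygons}. That proof takes a maximal family of pairwise disjoint $\epsilon$-geodesic diagonals of length less than $12(3\theta+\Theta)$. It then shows that any non-triangular complementary face would let one enlarge the family. The tools are \cref{lem: thin geodesic polygons} and an exchange argument along short paths $\beta_m$ from points of a long diagonal to nearby vertices.
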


Judiciously choosing the starting data, we obtain the following.

\begin{thm} \label{thm: triangulation}
 Let $\Sigma$ be a complete Riemannian surface, possibly with boundary and corners.
 For every $\Xi>0$ there exists a locally finite metric graph $G$ with a $1$-Lipschitz embedding $ G\hookrightarrow \Sigma$ such that: 
 \begin{enumerate}
  \item \label{G iv} $G$ defines a $3$-gonal decomposition of $\Sigma$;
  \item \label{G ii} every edge of $G$ has length at most $\Xi$;
  \item \label{G iii} every face $F$ of $G$ has diameter at most $\Xi$ with respect to its intrinsic metric $d^F_\ell$.
 \end{enumerate}
 Moreover, if $\partial \Sigma$ satisfies \eqref{comparable metric}, then
 \begin{enumerate}\setcounter{enumi}{3}
  \item \label{G i}  $ G\hookrightarrow \Sigma$ is a quasi\=/isometry. 
 \end{enumerate}
\end{thm}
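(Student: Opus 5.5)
The plan is to build data $X\subset\overline G\subset\Sigma$ satisfying \eqref{parameters}, \eqref{X net}, \eqref{nice metric graph} and \eqref{homotopy approximation}, with $\Theta$ small compared to $\Xi$. We then apply \cref{thm: filling triangulation} and track constants.

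\textbf{Choosing the vertex set.} Fix $\Theta\le \Xi/C$ for a universal constant $C$ to be determined, and set $\theta=\Theta/3$. Let $X$ be a maximal $\theta$-separated subset of $\Sigma$ that contains a maximal $\theta$-separated subset, with respect to $d_\ell^{\partial\Sigma}$, of each boundary component. Such an $X$ is $\theta$-dense and, by completeness and local compactness, locally finite.

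\textbf{Building $\overline G$.} For each pair $x,y\in X$ with $d_\Sigma(x,y)\le 3\theta$, join them by a piecewise smooth $\epsilon$-geodesic. Add $\partial\Sigma$, subdivided at $X\cap\partial\Sigma$ and further subdivided so that its edges have length at most $\Theta$. These curves need to be made into an embedded graph. Locally only finitely many curves meet, so I would apply \cref{lem: resolve intersections} together with the local perturbation technique of \cref{cor: epsilon geodesics in minimal position} to put them in general position (transverse, finitely many crossings). Each crossing is then declared a new vertex. The new edges are subarcs of curves of length less than $3\theta+\epsilon<\Theta$. Every point of an edge is within $\Theta/2$ of an endpoint of its original curve, and that endpoint lies in $X$, so \eqref{nice metric graph} holds.

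\textbf{Verifying \eqref{homotopy approximation}; this is the main obstacle.} Let $\gamma$ run from $x\in X$ to $y\in X$. Subdivide $\gamma$ into pieces of length at most $\theta$, and pick points $x_i\in X$ within $\theta$ of the subdivision points. Consecutive $x_i$ are then within $3\theta$ of each other, so $\overline G$ contains a drawn curve $\beta_i$ joining them, of length at most $3\theta+\epsilon$. Concatenating the $\beta_i$ gives $\bar\gamma$ with $|\bar\gamma|\le (3\theta+\epsilon)(|\gamma|/\theta+1)$, so $M=3+\epsilon/\theta$ and $A=3\theta+\epsilon$ work.

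The homotopy is the delicate part. Each loop formed by $\beta_i$, the short segments from $x_i$ and $x_{i+1}$ to $\gamma$, and the corresponding piece of $\gamma$ has length about $6\theta$. We need it null-homotopic inside the $2\Theta$-neighbourhood of $\gamma$. This is not automatic, since a small loop can be essential, for instance around a thin handle or a short closed geodesic. To handle this I would additionally require $\Theta$ to be smaller than the local convexity/injectivity scale. On a noncompact surface that scale need not be uniform, so instead I would make the perturbation parameters local: choose $\epsilon$-geodesics within $\epsilon$ of genuine geodesics, chosen locally as minimizers. Then the loop has length less than $2\Theta$ and lies in the $\Theta$-neighbourhood of $\gamma$, hence is contained in a metric ball $B$ of radius less than $\Theta$ around a point of $\gamma$, and $B$ lies in the $2\Theta$-neighbourhood. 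If this triviality argument fails, the fallback is to simply add to $\overline G$ a fine cell decomposition refining a given triangulation. Its small cells force every short loop formed from graph edges to be contractible within small neighbourhoods, so the homotopy is taken through those cells.

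\textbf{Applying \cref{thm: filling triangulation} and recovering \eqref{G iv}--\eqref{G i}.} The theorem yields $G'$ whose faces have at most three edges and bounded intrinsic diameter. Inspecting its proof, I expect the bounds to be $O(\Theta)$; choosing $C$ large gives \eqref{G ii} and \eqref{G iii}. To reach the exact $3$-gonal decomposition \eqref{G iv} in general, apply \cref{lem: removing 2-gons} to $G'$; only the ``moreover'' metric conclusions of that lemma require \eqref{comparable metric}. \cref{lem: removing 2-gons} enlarges face diameters by at most a factor $3$, which is absorbed into $C$, and it only deletes edges, so edge lengths are preserved. The $1$-Lipschitz property holds by \cref{rmk:metrizing embedded graphs}. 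Finally, under \eqref{comparable metric}, the quasi-isometry \eqref{G i} follows from \cref{lem: quasi isometry} combined with \eqref{ms i} of \cref{lem: removing 2-gons}.
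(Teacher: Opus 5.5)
There is a genuine gap, exactly at the step you flag as the main obstacle. With a single short curve drawn between each pair of nearby points of $X$, the homotopy half of \eqref{homotopy approximation} is false in general, and neither of your repairs fixes it.

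Concretely, take the flat cylinder $\mathbb R\times(\mathbb R/r\mathbb Z)$ with $r\ll\theta$. Let $X$ consist of points of $\mathbb R\times\{0\}$ with spacing slightly below $2\theta$; this is a maximal $\theta$-separated set. With the obvious straight choices, all your drawn curves lie in a thin band around $\mathbb R\times\{0\}$. Then $\Sigma\smallsetminus\overline G$ contains an unbounded strip, so the conclusion of \cref{prop:bounded diameter} fails, and hence so must \eqref{homotopy approximation}. Indeed, a curve winding around the cylinder is homotopic near itself to no path of $\overline G$.

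Your first repair only shows that the small loop (drawn curve, two connecting segments, piece of $\gamma$) lies in a ball of radius $<\Theta$. Containment does not give null-homotopy, since such balls need not be simply connected, which is the whole problem. Making $\epsilon$ local is irrelevant, because the size of that loop is governed by $\theta$, not $\epsilon$.

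Your fallback claims that adding a fine cell decomposition makes short loops of graph edges contractible. But contractibility of a loop is a property of $\Sigma$, not of the graph: the short loop around the cylinder stays essential whatever cells you add. At best, a fine triangulation supplies graph paths in the right homotopy class via cellular approximation. Those paths come with no uniform length bound, so you lose a single $\bar\gamma$ satisfying both parts of \eqref{homotopy approximation}.

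The missing idea, which is what the paper does, is to draw one short representative of \emph{every} relevant local homotopy class, not one curve per pair.
\begin{enumerate}
\item For each pair $x,y\in X$ with $d_\Sigma(x,y)<\Theta$, fix a compact subsurface $\Sigma_{\{x,y\}}$ containing $B(x;\Theta)\cap B(y;\Theta)$ and contained in its $\theta/2$-neighbourhood.
\item For each homotopy class of $x$--$y$ curves in $\Sigma_{\{x,y\}}$ that contains a curve of length $<\Theta$, include a piecewise smooth representative of length $<\Theta$.
\item By compactness there are only finitely many such classes, so local finiteness survives.
\item In \cref{lem:close homotopy}, each piece $\alpha_k^{-1}\gamma_k\alpha_{k+1}$ has length $\le\Theta$ and lies in $\Sigma_{\{x_k,x_{k+1}\}}$. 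By construction it is homotopic inside $\Sigma_{\{x_k,x_{k+1}\}}$ to one of the chosen curves. This gives the homotopy and the length bound for the same path at once.
\end{enumerate}

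With this in place, the rest of your outline is the paper's argument: apply \cref{thm: filling triangulation}, then \cref{lem: removing 2-gons}, with $\Theta$ a fixed fraction of $\Xi$ such as $\Xi/138$.

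Two minor slips. With $\theta=\Theta/3$ you get $3\theta+\epsilon>\Theta$, so take $\theta$ smaller. Also, a $d_\Sigma$-separated set containing intrinsic boundary nets need not exist, so take a union of nets instead, as the paper does.
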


By \cref{rmk:barycentric subdivision of 3-gonal}, applying a barycentric subdivision to the $3$-gonal decomposition of \cref{thm: triangulation}, we obtain:

\begin{cor}
  Under the hypotheses of \cref{thm: triangulation}, we can further achieve that $G$ induces a triangulation of $\Sigma$.
\end{cor}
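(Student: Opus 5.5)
The plan is to apply \cref{thm: triangulation} and then refine the resulting $3$-gonal decomposition $G\subset\Sigma$ by a barycentric subdivision, as in \cref{rmk:barycentric subdivision of 3-gonal}, checking that no metric property is lost. In each face $F$ of $G$ (a $3$-gon with boundary path through at most three vertices and three edges, possibly repeated) I choose a point $b_F\in F$, then join $b_F$ by disjoint arcs inside $F$ to the corners and to the midpoints of the edges of its boundary path, taking corners and edges with multiplicity. I also subdivide each edge of $G$ at its midpoint. Every original triangle is thus cut into six triangles. Since we pass through the barycentric subdivision, all new vertices are distinct and every new face has three distinct vertices and edges, including for degenerate faces of $G$ (loops, repeated vertices): each corner of the boundary path is now separated by a new vertex. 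The result $G'$ is therefore a genuine triangulation, that is, a non-degenerate $3$-gonal decomposition.

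For the metric properties, the main choice is how to draw the arcs. Because the intrinsic diameter of $F$ is at most $\Xi$, any two points of $F$, in particular $b_F$ and any point of $\partial F$ approached from inside, can be joined by a curve in $F$ of length less than $\Xi+\delta$. I take such near-geodesic curves, one to each corner and midpoint occurrence, and make them simple and pairwise disjoint except at $b_F$. For this I pull back to the closed disc via the characteristic map of $F$: the curves form a star from $b_F$, so \cref{lem: resolve intersections} (or \cref{cor: epsilon geodesics in minimal position} applied after cutting the disc at $b_F$) perturbs them into disjoint piecewise smooth curves meeting $\partial F$ only at their endpoints, with length increasing by at most $\delta$. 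Then every new edge has length at most $\Xi+2\delta$, and halves of old edges have length at most $\Xi/2$. Each new face lies inside one face $F$ of $G$, so its intrinsic diameter is at most that of $F$ up to $O(\Xi)$. Rescaling $\Xi$ at the start gives bounds \eqref{G ii}, \eqref{G iii} with the original $\Xi$. The embedding remains $1$-Lipschitz because edges are assigned their $\Sigma$-length.

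For \eqref{G i}: $G\subseteq G'$ as a subset of $\Sigma$ with the same arc-length metric on $G$, so $d_{G'}\le d_G$ on $G$. Every new point is within $\Xi+2\delta$ of $G$ inside $G'$. Hence $G'$ is coarsely dense, and distances in $G'$ are bounded below by $d_\Sigma$ and above by $d_G$ plus a constant, so the quasi-isometry passes from $G$ to $G'$. I expect the main obstacle to be the degenerate faces, where the boundary path repeats vertices or edges, so that the closure of $F$ is not a closed disc. The perturbation must be done in the characteristic disc, so that arcs to two occurrences of the same vertex enter from different sides and stay disjoint. I would handle this by applying the resolution lemma in $\mathbb D^2$ with the pulled-back length structure and pushing forward, using that the characteristic map is injective on the open disc.
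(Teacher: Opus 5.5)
Your proposal is correct and follows the paper's argument: there the corollary is obtained in one line by applying \cref{thm: triangulation} and then the barycentric subdivision of \cref{rmk:barycentric subdivision of 3-gonal}, and you supply the bookkeeping the paper leaves implicit (short arcs in the completed face, edge-length and quasi-isometry bounds, degenerate faces handled in the characteristic disc). Two details need tightening. First, your near-geodesics from $b_F$ may cross one another, so before invoking \cref{lem: resolve intersections} you must make their union a tree, e.g.\ by keeping each new curve only from its last point on the tree already built, at the cost of a bounded factor in length. Second, intrinsic diameter is not monotone under passing to a subface, so the bound for a new face should come from the diameter bound on $F$ together with the bounded perimeter of the new face: replace each excursion of a short path out of the subface by the shorter arc of its boundary.
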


\begin{remark}
  It is plausible that \cref{thm: triangulation} remains true without the assumption \eqref{comparable metric}, but this would introduce substantial technical difficulties. We thus decided to work under this restriction, which is satisfied by all natural surfaces we could think of.
\end{remark}

The rest of this section is devoted to the proof of \cref{thm: filling triangulation,thm: triangulation}.

\subsection{Bounding the extrinsic diameter of the faces}
We shall now show that $\Sigma\smallsetminus \overline G$ has connected components of uniformly bounded diameter, which hence have compact closure.

\begin{prop}\label{prop:bounded diameter}
  Each face of $\overline G\subset \Sigma$ has $d_\Sigma$-diameter at most $13\Theta$.
\end{prop}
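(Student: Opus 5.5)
The plan is to show that every face $F$ of $\overline G$ stays close to a single point of $X$. Fix $p\in F$ and choose $x\in X$ with $d_\Sigma(p,x)\leq\theta$, which \eqref{X net} allows. Note that $x\in X\subseteq V(\overline G)$, so $x\notin F$. I will aim to prove that $F\subseteq B(x;6\Theta+\theta)$. Then for $p,q\in F$ the triangle inequality gives a $d_\Sigma$-diameter of roughly $12\Theta+2\theta<13\Theta$, using $\theta<\Theta/2$. A first easy observation is that every point of $F$ lies within $\theta$ of $X\subseteq\overline G$, so faces are ``thin''. The difficulty is only to prevent a face from being a long thin corridor.

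Suppose, towards a contradiction, that $F$ contains a point $q$ with $d_\Sigma(x,q)$ large. Since $F$ is open and connected, there is a piecewise smooth curve $c\subseteq F$ from $p$ to $q$. I would choose points $p=c_0,c_1,\dots,c_k=q$ along $c$ with consecutive distances at most $\Theta$, together with points $x_i\in X$ satisfying $d_\Sigma(c_i,x_i)\leq\theta$. Then $d_\Sigma(x_i,x_{i+1})\leq\Theta+2\theta<2\Theta$. Applying \eqref{homotopy approximation} to a geodesic $\sigma_i$ from $x_i$ to $x_{i+1}$ gives a path $\bar\sigma_i\subseteq\overline G$ that is homotopic to $\sigma_i$ inside the $2\Theta$-neighbourhood of $\sigma_i$. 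Consequently, each loop
\[
\lambda_i\coloneqq[c_i,x_i]\ast\bar\sigma_i\ast[x_{i+1},c_{i+1}]\ast c|_{[c_{i+1},c_i]}
\]
is contained in a ball of radius about $4\Theta$ around $c_i$. It is also null-homotopic there, being homotopic to the small triangle formed by $[c_i,x_i]$, $\sigma_i$ and $[x_{i+1},c_{i+1}]$ closed up by the short arc of $c$, all of which lie in a ball of radius about $2\Theta$ around $c_i$.

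The key step, and the one I expect to be the main obstacle, is to turn these local null-homotopies into a topological obstruction. I would attempt this via an intersection-number argument. Consider the arc $\rho_i=[c_i,x_i]$. It starts inside $F$ and ends on $\overline G$, so it must first exit $F$ at a point of $\partial F\subseteq\overline G$. I would track how the concatenated $\overline G$-paths $\bar\sigma_0\ast\cdots\ast\bar\sigma_{k-1}$ sit relative to $c$. Since $c$ lies in the complement of $\overline G$, the curve $c$ never crosses these paths. I would then try to show that the union of the $\bar\sigma_i$ together with the short arcs $\rho_i$ bounds, locally near each $c_i$, a region containing the portion of $c$ near $c_i$. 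If this works, the face $F$ is confined to the union of these local regions. Combined with the fact that $F$ cannot contain any $x_i$, this would force $c$ to turn back before travelling more than about $6\Theta$ from $x$.

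If a clean separation statement is hard to extract on a general (possibly non-planar) surface, my fallback is to work locally. Because everything happens inside balls of radius at most about $6\Theta$, I would first apply \cref{lem:regular neighbourhood} to pass to a small subsurface where the relevant loops are null-homotopic. There I would use a Jordan-curve-type argument to separate points, taking care to choose the $\rho_i$ simple and disjoint from one another, perturbing them with \cref{lem: resolve intersections} if necessary. The constant $13\Theta$ would then come from adding the $2\Theta$ homotopy-neighbourhood radius, the length at most $2\Theta$ of the $\sigma_i$, and the $\theta$-offsets on each side.
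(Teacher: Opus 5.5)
There is a genuine gap, and it sits exactly at the step you flag as the main obstacle. The $\overline G$-paths $\bar\sigma_i$ you build run \emph{alongside} $c$. None of the facts you derive from them can confine $F$: the loops $\lambda_i$ are short and null-homotopic in small balls, $c$ never meets $\bar\sigma_i$, and $F$ contains no $x_i$. For instance, let $F$ be a long planar strip of width less than $\theta$ between two lines of $\overline G$. Let $c$ run inside the strip, closer to one line, so that every nearest point $x_i$ lies on that line. Then each $\bar\sigma_i$ can be taken to be a segment of that line, and everything you establish holds, while $c$ is as long as you like. The regions you hope to find near each $c_i$ would just chain together into a neighbourhood of all of $c$, and nothing forces $c$ to turn back. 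What excludes such a strip is \eqref{homotopy approximation} applied to a short curve that \emph{crosses} $c$ transversally. Your argument never uses the hypothesis in that way, so no contradiction can arise.

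That is precisely what the paper does. Take $c\subset F$ with endpoints $13\Theta$ apart, and split it as $c_-\cup c_m\cup c_+$ with $d_\Sigma(c_-,c_+)\geq 2\theta$. Build nested compact subsurfaces $N_0\subset N_1\subset N_2$ around $c_m$, as successive thickenings by $2\theta$, $2\Theta+\theta$ and $2\Theta$. Each $N_i$ then meets $c$ in a proper arc $c_i$ with both ends on $\partial N_i$; this is what the length $13\Theta$ is for. The argument then splits into two cases.
\begin{enumerate}
\item If $N_1\smallsetminus c_1$ is connected, take a loop in $N_1$ based at a point of $X$ that crosses $c_1$ exactly once. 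Its $\overline G$-approximant is homotopic to it inside $N_2$. A mod $2$ intersection count with the proper arc $c_2$ then shows the approximant meets $c$.
\item If $N_1\smallsetminus c_1$ is disconnected, pick two points on the boundary circle of $N_0$ lying on opposite sides of $c$ and at distance more than $\theta$ from $c$. The nearest points $x_\pm\in X$ then lie in different components of $N_1\smallsetminus c_1$. Join $x_-$ and $x_+$ by a curve in the $\theta$-neighbourhood of $N_0$. Its approximant stays in $N_1$ and joins the two components, so it meets $c$.
\end{enumerate}
Either way an edge of $\overline G$ meets $c\subset F$, a contradiction. The first case is essential and is not covered by your Jordan-curve fallback. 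On a general surface you cannot pass to a bounded-size subsurface in which the relevant loops are null-homotopic, since balls of radius comparable to $\Theta$ may contain handles. In such a neighbourhood a long arc need not separate anything, so separation alone cannot give the contradiction.
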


\begin{proof}
 Suppose to the contrary there is a face $F$  containing two points $p,p'$ with $d_\Sigma(p,p')=13\Theta$ and pick a smooth, simple curve $c\subset F$ joining $p$ to $p'$. We will reach a contradiction by showing that there is an edge in $\overline G$ crossing $c$.
 
 The constant $13\Theta$ is large enough that we can split $c$ into sub-segments as $c= c_-\cup c_m\cup c_+$ such that
 \begin{enumerate}
  \item\label{item:segments are apart} $d_\Sigma(c_-,c_+)\geq 2\theta $;
  \item\label{item:segments are long} both the endpoints of $c_-$ and those of $c_+$ are at $d_\Sigma$-distance greater than $(4\Theta+3\theta)$ from one another.
 \end{enumerate}
 
 Let $N_0$ be the closed $2\theta$\=/neighbourhood of $c_m$. Enlarging $N_0$ a little if necessary, we may assume that it is a (compact) subsurface of $\Sigma$ (\cref{lem:regular neighbourhood}). Let $c_0$ be the closure of the connected component of $c\smallsetminus \partial N_0$ containing $c_m$.
 
 Let $N_1$ be the closed $(2\Theta+\theta)$\=/neighbourhood of $N_0$ (\cref{fig:construction}). We again assume that it is a submanifold of $\Sigma$ and let $c_1$ be the closure of the connected component of $c\smallsetminus \partial N_1$ containing $c_0$.
 Similarly, $N_2$ is the $2\Theta$\=/neighbourhood of $N_1$ and $c_2$ is the closure of the component of $c\smallsetminus \partial N_2$ containing $c_1$.
 Note that condition \eqref{item:segments are long} implies that the curves $c_i$ are properly nested, and that for each $i=0,1,2$ the curve $c_i$ meets the boundary $\partial N_i$ in two points $p_i^-\in c_-$ and $p_i^+\in c_+$.

 \smallskip

 We now distinguish two cases depending on whether $N_1\smallsetminus c_1$ is connected.
 
 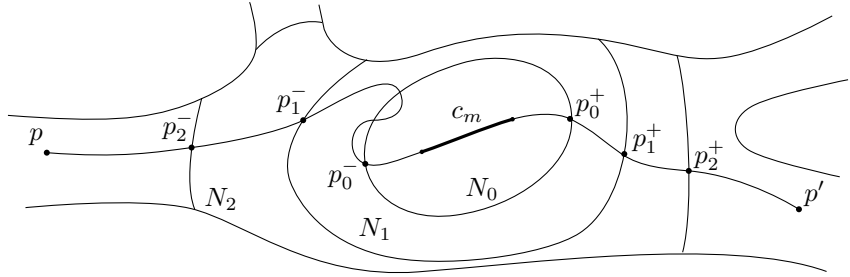
\begin{figure}
  \begin{tikzpicture}[y=1cm, x=1cm, yscale=\globalscale,xscale=\globalscale, every node/.append style={scale=1}, inner sep=0pt, outer sep=0pt]
    \path[draw=black,line cap=butt,line join=miter,] (6.8797, 24.0147).. controls (6.2294, 24.2502) and (4.8806, 24.0653) .. (4.1639, 24.0121).. controls (3.4473, 23.959) and (3.001, 24.3511) .. (2.6531, 24.4335).. controls (2.3051, 24.516) and (1.5549, 24.4394) .. (1.5549, 24.4394)(7.0227, 25.2904).. controls (7.0227, 25.2904) and (6.564, 25.2015) .. (6.4007, 25.1226).. controls (6.2374, 25.0436) and (6.2382, 24.8986) .. (6.3625, 24.8222).. controls (6.4867, 24.7457) and (6.9674, 24.6445) .. (6.9674, 24.6445)(3.5054, 25.7896).. controls (3.54, 25.6441) and (3.5263, 25.6299) .. (3.5758, 25.5596).. controls (3.6254, 25.4892) and (3.7971, 25.3281) .. (4.1953, 25.4663).. controls (4.5935, 25.6044) and (4.9299, 25.6196) .. (5.3311, 25.5646).. controls (5.7323, 25.5097) and (5.9247, 25.3965) .. (6.2131, 25.4397).. controls (6.5014, 25.4828) and (6.9172, 25.7149) .. (6.9172, 25.7149)(1.4383, 25.0532).. controls (2.0582, 25.0061) and (2.3472, 25.02) .. (2.626, 25.1235).. controls (2.9048, 25.227) and (3.081, 25.3659) .. (3.0879, 25.5116).. controls (3.0947, 25.6573) and (3.0447, 25.8062) .. (3.0447, 25.8062);

    \path[draw=black,line cap=butt,line join=miter,] (5.8864, 25.4487).. controls (5.9596, 25.1788) and (6.0013, 24.4411) .. (5.923, 24.1426)(3.0879, 25.4912).. controls (3.201, 25.6142) and (3.3686, 25.7014) .. (3.527, 25.6706)(2.6797, 24.4292)node[yshift=4,xshift=10]{$N_2$}.. controls (2.5846, 24.6663) and (2.7272, 25.1681) .. (2.7272, 25.1681);

    \path[draw=black,line cap=butt,line join=miter,] (3.8254, 25.4242).. controls (3.3837, 25.1349) and (3.2438, 24.808) .. (3.3137, 24.6155).. controls (3.3689, 24.4637) and (3.5421, 24.2187) .. (3.9141, 24.1236)node[yshift=10,xshift=-2]{$N_1$}.. controls (4.3075, 24.023) and (5.0098, 24.1192) .. (5.2405, 24.2719).. controls (5.5939, 24.5059) and (5.6691, 25.2855) .. (5.3617, 25.5621);

    \path[draw=black,line cap=butt,line join=miter,] (4.459, 25.4017).. controls (4.1524, 25.3116) and (3.9105, 25.1586) .. (3.8345, 24.9445).. controls (3.7194, 24.6203) and (3.9893, 24.3311) .. (4.4136, 24.3893)node[yshift=10,xshift=10]{$N_0$}.. controls (4.8379, 24.4475) and (5.1558, 24.7178) .. (5.1826, 24.9673).. controls (5.2289, 25.3987) and (4.7656, 25.4918) .. (4.459, 25.4017) -- cycle;

    \path[draw=black,line cap=butt,line join=miter,] (4.7924, 25.0276).. controls (4.9333, 25.0689) and (5.0907, 25.0724) .. (5.1747, 25.03)node[bnode]{}node[xshift=8,yshift=6]{$p_0^+$}.. controls (5.34, 24.9467) and (5.401, 24.8768) .. (5.5178, 24.7963)node[bnode,xshift=1]{}node[xshift=10,yshift=6]{$p_1^+$}.. controls (5.6452, 24.7085) and (5.8169, 24.6968) .. (5.9637, 24.684)node[bnode]{}node[xshift=8,yshift=6]{$p_2^+$}.. controls (6.3547, 24.6499) and (6.6962, 24.4286) .. (6.6962, 24.4286)node[bnode]{}node[xshift=6,yshift=6]{$p'$};

    \path[draw=black,line cap=butt,line join=miter, very thick,miter limit=4.0] (4.1894, 24.8104)node[sbnode]{}.. controls (4.3923, 24.8793) and (4.5928, 24.969) .. node[pos=0.5,yshift=8]{$c_m$}(4.7924, 25.0276)node[sbnode]{};

    \path[draw=black,line cap=butt,line join=miter,] (1.6966, 24.8051)node[bnode]{}node[xshift=-4,yshift=6]{$p$}.. controls (2.1925, 24.7665) and (2.4321, 24.8095) .. (2.659, 24.8391)node[bnode]{}node[xshift=-5,yshift=8]{$p_2^-$}.. controls (2.8751, 24.8672) and (3.2323, 24.9364) .. (3.401, 25.021)node[bnode]{}node[xshift=-4,yshift=8]{$p_1^-$}.. controls (3.64, 25.1408) and (3.9729, 25.3674) .. (4.0441, 25.206).. controls (4.0884, 25.1055) and (4.0307, 25.021) .. (3.8683, 25.021).. controls (3.706, 25.021) and (3.6765, 24.7804) .. (3.8137, 24.7306)node[bnode]{}node[xshift=-8,yshift=-4]{$p_0^-$}.. controls (3.9038, 24.698) and (4.0378, 24.759) .. (4.1894, 24.8104);
  \end{tikzpicture}
  \caption{Constructing nested surfaces around the middle portion of a very long curve $c$.}\label{fig:construction}
 \end{figure}

\smallskip

  \emph{Case I}.
 Assume $N_1\smallsetminus c_1$ is connected.
 Since $X=V(\overline{G})$ is $\theta$-dense, we can choose a point $x\in X\cap N_1$. We can also choose a simple closed curve $\gamma\subset N_1$ that starts at $x$ and intersects $c_1$ at exactly one point (head towards $c_1$, cross it, and return to $x$ using that $N_1\smallsetminus c_1$ is path connected).
 Let $\bar\gamma$ be the path in $\overline G$ given by \eqref{homotopy approximation}. By construction, the homotopy between $\gamma$ and $\bar\gamma$ takes place within $N_2$. Since $c_2$ is a simple arc joining two points in $\partial N_2$ and $\gamma\cap c_2=\gamma\cap c_1$ is one point, it follows that $\bar\gamma$ must also intersect $c_2$ in an odd number of points (this can be seen in several ways, for instance with a homological argument, or by taking the double of $N_2$ in order to prolong $c_2$ to a closed curve and apply standard intersection theory see e.g.\ \cite{guillemin2025differential}*{Section 2.4}). Hence $c$ crosses an edge of $\overline G$, contradiction.
 
\smallskip

 \begin{figure}
  \begin{tikzpicture}[y=1cm, x=1cm, yscale=\globalscale,xscale=\globalscale, every node/.append style={scale=1}, inner sep=0pt, outer sep=0pt]
    \path[draw=black,thick,line cap=butt,line join=miter,] (3.4932, 23.2845).. controls (2.9261, 23.2634) and (2.5171, 23.0689) .. (2.5042, 22.6309).. controls (2.4914, 22.1928) and (3.0107, 21.9226) .. (3.4241, 21.9289).. controls (3.9908, 21.9374) and (4.4958, 22.1944) .. (4.5161, 22.5752)node[xshift=12,yshift=-22]{$C\subseteq \partial N_0$}.. controls (4.5404, 23.0327) and (4.0603, 23.3056) .. (3.4932, 23.2845) -- cycle;

    \path[draw=black,dashed,miter limit=4.0] (3.4932, 23.2845)node[bnode]{}node[yshift=8]{$q_+$} circle (0.3571cm);

    \path[draw=black,dashed,miter limit=4.0] (3.3832, 21.929) circle (0.3571cm)node[bnode]{}node[xshift=6,yshift=-8]{$q_-$};

    \path[draw=black,line cap=butt,line join=miter,] (2.3534, 23.4943).. controls (1.852, 23.2528) and (1.606, 22.4767) .. (1.9338, 22.0466).. controls (2.2616, 21.6165) and (2.9272, 21.4971) .. (3.3262, 21.4954).. controls (4.2319, 21.4918) and (4.7862, 21.7668) .. (5.1009, 22.0657).. controls (5.3481, 22.3005) and (5.3546, 23.1531) .. (4.6137, 23.4986)node[yshift=6,xshift=8]{$N_1$}.. controls (3.8276, 23.8652) and (2.8547, 23.7358) .. (2.3534, 23.4943) -- cycle;

    \path[draw=black,line cap=butt,line join=miter,] (3.93, 22.6487).. controls (4.2177, 22.6731) and (4.4243, 22.6451) .. (4.5107, 22.6284)node[bnode]{}node[xshift=8,yshift=8]{$p_0^+$}.. controls (4.7204, 22.5877) and (4.9957, 22.5462) .. (5.2384, 22.5573)node[bnode]{}node[xshift=8,yshift=8]{$p_1^+$}.. controls (5.3706, 22.5634) and (5.5347, 22.566) .. (5.5347, 22.566);

    \path[draw=black,line cap=butt,line join=miter,very thick,miter limit=4.0] (3.1179, 22.6172)node[sbnode]{}.. controls (3.3737, 22.6307) and (3.681, 22.6276) .. (3.93, 22.6487)node[sbnode]{};

    \path[draw=black,line cap=butt,line join=miter,] (1.5155, 22.736).. controls (1.5992, 22.7389) and (1.7509, 22.7614) .. (1.8065, 22.7631)node[bnode]{}node[xshift=-4,yshift=8]{$p_1^-$}.. controls (2.125, 22.7726) and (2.1933, 22.7826) .. (2.5141, 22.6985)node[bnode]{}node[xshift=-4,yshift=9]{$p_0^-$}.. controls (2.6279, 22.6687) and (2.835, 22.6024) .. (3.1179, 22.6172);

    \path[draw=black,line cap=butt,line join=miter,thick] (3.1772, 21.7762)node[bnode]{}node[yshift=-6,xshift=6]{$x_-$}.. controls (3.1719, 21.9219) and (3.3348, 22.1249) .. (3.4359, 22.2169).. controls (3.537, 22.3089) and (3.6299, 22.4765) .. (3.6607, 22.5507)node[xshift=4,yshift=-8]{$\gamma$}.. controls (3.7283, 22.7139) and (3.6881, 23.0718) .. (3.7491, 23.1798)node[bnode]{}node[xshift=-8,yshift=-6]{$x_+$};
  \end{tikzpicture}
  \caption{Constructing a curve $\gamma$ with endpoints in $X$ that cut $c$ in the case that $N_1\smallsetminus c_1$ is disconnected.}\label{fig:disconnected case}
 \end{figure}
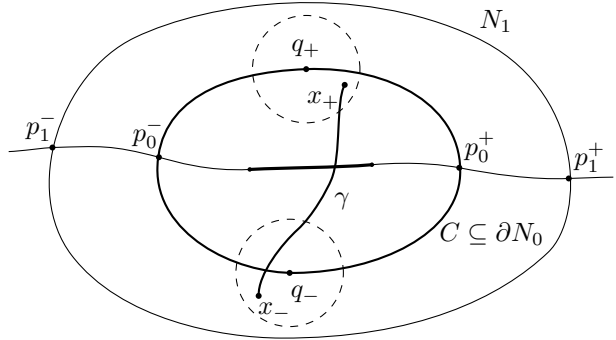
 \emph{Case II} (\cref{fig:disconnected case}).
 Assume $N_1\smallsetminus c_1$ is disconnected.
 Then $N_0\smallsetminus c_0$ is a fortiori disconnected (every point in $N_1$ is connected to $N_0$ via a path avoiding $c$). It follows that $p_0^+$ and $p_0^-$ must belong to the same component $C$ of $\partial N_0$ (if a simple arc $c'$ in a connected surface $\Sigma'$ meets a boundary component $C'$ at a single point, then $\Sigma'\smallsetminus c'$ is path connected because so is $C'\smallsetminus c'$).
 
 Note that $C$ is a circle and $p_0^+$ and $p_0^-$ cut it into two segments, each of which connects $c_-$ to $c_+$. By continuity, \eqref{item:segments are apart} implies that each of these segments contain points that are at distance greater than $\theta$ from both $c_-$ and $c_+$.
 Let $q$ and $q'$ be two such points, one for each segment.
 Since every point on $C$ is at distance greater than $\theta$ from $c_m$, it follows that $q$ and $q'$ are at distance greater than $\theta$ from $c=c_-\cup c_m\cup c_+$.
 Let $x_-$ and $x_+$ be points in $X$ closest to $q_-$ and $q_+$ respectively. Since the balls $B(q;\theta)$ and $B(q';\theta)$ are contained in $N_1\smallsetminus c$, the points $x$ and $x'$ belong to different components of $N_1\smallsetminus c_1$.

 We arbitrarily choose a curve $\gamma$ joining $x_-$ with $x_+$ without leaving the (closed) $\theta$\=/neighbourhood of $N_0$, and apply \eqref{homotopy approximation} to obtain a curve $\bar \gamma$ in $\overline G$ homotopic to it. The homotopy between $\gamma$ and $\bar\gamma$ takes place in $N_1$ by construction, and in particular $\bar\gamma$ is a curve connecting $x$ and $x'$ in $N_1$. Since $x$ and $x'$  are in two distinct components of $N_1\smallsetminus c_1$, it follows that $\bar\gamma$ intersects $c$.
\end{proof}

\subsection{Proving that faces are discs}\label{ssec: bounding topology}
We shall now show that $\overline{G}$ defines a cell decomposition of $\Sigma$ (\cref{ssec: grapsh and triangulations}).
To prove this, we start with a few preliminary observations, which will be of use both here and in the next section.

Suppose $G\subset \Sigma$ is some embedded graph with $\partial \Sigma \subseteq G$ whose edges are piecewise smooth and have finite length.
Given a face $F$ of $G$ with compact closure, we can consider it with its own intrinsic path-metric $d_\ell^F$ and denote by $\widehat{F}$ its completion. 
Observe that $\widehat{F}$ will generally differ from the closure $\overline F\subseteq\Sigma$, and it is a compact Riemannian surface with boundary and corners (taking the completion of the interior with respect to the intrinsic metric has the effect of ``opening up'' non-trivial glueings of the boundary). In particular, $\partial\widehat F\coloneqq \widehat{F}\smallsetminus F$ is homeomorphic to a disjoint union of loops.
Note that the inclusion $F\hookrightarrow\Sigma$ extends to a $1$-Lipschitz surjective map $p\colon(\widehat{F},d_F)\to (\overline F, d_\Sigma)$ such that $p^{-1}(\partial F)=\partial\widehat F$. In turn, $p$ descends to a homeomorphism when quotienting out the boundary
\[
\widehat{F}/\partial \widehat{F}\cong \overline{F}/\partial F \cong \Sigma/(\Sigma\smallsetminus F).
\]

\smallskip

Let now $F$ be a face of the embedded graph $\overline{G}$ constructed in \cref{ssec:construction of G}. To prove that $\overline{G}$ is a cell decomposition it is enough to show that $\widehat{F}$ is homeomorphic to the disc $\mathbb D^2$.
Let $\pi\colon \widehat{F} \to\widehat{F}/\partial \widehat{F}$ be the quotient map.
We observe the following.

\begin{lemma}\label{lem:paths null-homotopic in the quotient}
  If $\gamma\colon I\to \widehat F$ is a path with endpoints in $\partial \widehat F$, then the closed loop $\pi\circ \gamma\colon I \to \widehat{F}/\partial \widehat{F}$ is null-homotopic.
\end{lemma}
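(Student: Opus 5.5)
The plan is to push the loop out of $\widehat F$ and into $\Sigma$, where \eqref{homotopy approximation} can be applied. Recall the homeomorphism $\widehat{F}/\partial\widehat F\cong \Sigma/(\Sigma\smallsetminus F)$. Under it, $\pi$ corresponds to the composition $q\circ p$, where $p\colon \widehat F\to\overline F\subseteq\Sigma$ is the $1$-Lipschitz extension of the inclusion and $q\colon \Sigma\to\Sigma/(\Sigma\smallsetminus F)$ is the quotient map. It therefore suffices to show that $q\circ p\circ\gamma$ is null-homotopic.

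\textbf{Step 1: reduce to a path with endpoints in $X$.} Without loss of generality $\gamma$ is rectifiable; otherwise, first homotope it rel endpoints to a piecewise smooth path in $\widehat F$, which is a compact Riemannian surface with corners. The endpoints $p(\gamma(0))$ and $p(\gamma(1))$ lie on $\partial F\subseteq\overline G$. By \eqref{nice metric graph}, each lies within $\overline G$-distance $\Theta/2$ of a point of $X$. Prolong $p\circ\gamma$ at both ends by paths inside $\overline G$ to obtain a rectifiable curve $\gamma'$ in $\Sigma$ with endpoints in $X$. Since $q$ collapses $\overline G\supseteq\partial F$ to the basepoint, $q\circ\gamma'$ is a reparametrization of $q\circ p\circ\gamma$ with constant pieces attached at both ends. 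So it is enough to prove that $q\circ\gamma'$ is null-homotopic.

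\textbf{Step 2: apply \eqref{homotopy approximation}.} This gives a path $\bar\gamma\subseteq\overline G$ and a homotopy $H$ rel endpoints from $\gamma'$ to $\bar\gamma$. Composing with $q$, the homotopy $q\circ H$ fixes the endpoints at the basepoint $*$. Hence $q\circ\gamma'$ is homotopic as a loop to $q\circ\bar\gamma$. But $\bar\gamma\subseteq\overline G\subseteq \Sigma\smallsetminus F$, so $q\circ\bar\gamma$ is the constant loop at $*$. Therefore $q\circ\gamma'$, and hence $\pi\circ\gamma$, is null-homotopic.

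\textbf{Main obstacle.} The delicate point is bookkeeping, and I expect it to be the only real issue. One concern is whether the homotopy in \eqref{homotopy approximation} fixes endpoints; I would read it that way, since both curves are paths between the same points of $X$. The other concern is checking that the identification $\widehat F/\partial\widehat F\cong\Sigma/(\Sigma\smallsetminus F)$ really intertwines $\pi$ with $q\circ p$. This holds because $p$ restricts to the identity on $F$ and maps $\partial\widehat F$ onto $\partial F$, which is collapsed.

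Notably, the $2\Theta$-localization of the homotopy plays no role here: null-homotopy in the quotient is a global statement. Finally, the lemma itself only asks that $\pi\circ\gamma$ be null-homotopic in $\widehat F/\partial\widehat F$, which is exactly what the homeomorphism transfers.
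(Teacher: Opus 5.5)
Your proposal is correct and follows the paper's proof essentially step for step. You identify $\pi$ with the quotient map composed with $p$, prolong $p\circ\gamma$ along $\overline G$ to endpoints in $X$, apply \eqref{homotopy approximation}, and note that the resulting path lies in $\overline G\subseteq\Sigma\smallsetminus F$ and so collapses to the basepoint. Your extra remarks fill in details the paper leaves implicit: the reduction to a rectifiable $\gamma$, and the need for the homotopy to fix endpoints, which the construction in \cref{lem:close homotopy} indeed provides.
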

\begin{proof}
  Consider the path $p\circ \gamma$ in $\Sigma$ and note that if we identify $\widehat F/\partial\widehat F\cong \Sigma/(\Sigma\smallsetminus F)$ then $\pi\circ\gamma = \pi_\Sigma\circ p\circ \gamma$, where $\pi_\Sigma$ is the quotient map $\Sigma\to\Sigma/(\Sigma\smallsetminus F)$. We may prolong $p\circ\gamma$ along $\overline G$ to obtain a path $ \gamma'$ with endpoints in $X$, and we observe that $\pi\circ p\circ \gamma$ and $\pi\circ \gamma'$ are homotopic as closed loops.
  By \eqref{homotopy approximation}, $\gamma'$ is homotopic to a path $\overline\gamma'\subseteq\overline G\subseteq \Sigma\smallsetminus F$.
  But then we are done, because $\pi\circ\overline\gamma'$ is constant in $\Sigma/(\Sigma\smallsetminus F)$. 
\end{proof}

Given the the classification of compact surfaces, the following fact is an exercise in algebraic topology (which can be solved using either fundamental groups or homology computations).

\begin{fact}\label{lem:recognize disks}
  A connected compact surface $\Delta$ with $\partial \Delta\neq \emptyset$ is homeomorphic to a disk if and only if the image of every path $\gamma\colon I\to \Delta$ with endpoints in $\partial \Delta$ under the quotient map $\pi\colon \Delta\to\Delta/\partial\Delta$ is null-homotopic (\emph{i.e.}\ $\pi\circ\gamma=0\in \pi_1(\Delta/\partial\Delta)$).
\end{fact}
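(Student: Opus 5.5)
The plan is to use the classification of compact surfaces: write $\Delta$ as a connected sum of $g$ tori or $k$ projective planes with $b\geq1$ open discs removed, and compute $\pi_1(\Delta/\partial\Delta)$ together with the images of arcs.

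The ``only if'' direction is immediate. If $\Delta\cong\mathbb D^2$, then $\partial\Delta=\mathbb S^1$ and $\Delta/\partial\Delta\cong\mathbb S^2$. This space is simply connected, so every loop in it, in particular every $\pi\circ\gamma$, is null-homotopic.

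For the ``if'' direction we argue by contrapositive. Suppose $\Delta$ is not a disc. We will exhibit an arc $\gamma$ with endpoints on $\partial\Delta$ such that $\pi\circ\gamma$ is nontrivial. It is most convenient to detect nontriviality in homology. Since $(\Delta,\partial\Delta)$ is a good pair, $\tilde H_*(\Delta/\partial\Delta)\cong H_*(\Delta,\partial\Delta)$. If a loop is null-homotopic, its class in $H_1$ vanishes. So it suffices to find $\gamma$ whose class $[\gamma]\in H_1(\Delta,\partial\Delta;\mathbb Z/2)$ is nonzero. We work with $\mathbb Z/2$ coefficients to treat the orientable and non-orientable cases uniformly.

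By Lefschetz duality with $\mathbb Z/2$ coefficients, the intersection pairing
\[
H_1(\Delta,\partial\Delta;\mathbb Z/2)\times H_1(\Delta;\mathbb Z/2)\to\mathbb Z/2
\]
is nondegenerate. We split into two cases.

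\emph{Case $b\geq2$.} Take $\gamma$ to be a simple arc joining two distinct boundary components. Choose a simple closed curve $\delta$ parallel to one of these boundary components, which meets $\gamma$ exactly once. Then the intersection number is $[\gamma]\cdot[\delta]=1$, so $[\gamma]\neq0$.

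\emph{Case $b=1$ and $\Delta$ not a disc.} Then $g\geq1$ or $k\geq1$. Take a simple closed curve $\delta$ in the interior that is a handle curve (in the orientable case) or a one-sided curve (in the non-orientable case). There is a dual simple closed curve $\delta'$ meeting $\delta$ exactly once; in the non-orientable case take $\delta'=\delta$ itself, perturbed, whose mod-$2$ self-intersection is $1$. Cut $\delta'$ at a point and connect both ends to $\partial\Delta$ by a path disjoint from $\delta$. This produces an arc $\gamma$ whose endpoints lie on $\partial\Delta$ and which meets $\delta$ an odd number of times. Hence $[\gamma]\cdot[\delta]=1$ and again $[\gamma]\neq0$.

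In both cases $\pi\circ\gamma$ represents a nonzero class in $H_1(\Delta/\partial\Delta;\mathbb Z/2)$, so it is not null-homotopic, proving the contrapositive.

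The main obstacle is the identification, under the isomorphism $\tilde H_1(\Delta/\partial\Delta)\cong H_1(\Delta,\partial\Delta)$, of the homology class of the loop $\pi\circ\gamma$ with the relative class $[\gamma]$. This follows from naturality of the quotient map $(\Delta,\partial\Delta)\to(\Delta/\partial\Delta,\ast)$. A second point needing care is constructing the connecting path from $\delta'$ to $\partial\Delta$ so that it avoids $\delta$. This is possible because $\Delta\smallsetminus\delta$ is connected and contains $\partial\Delta$, for $\delta$ non-separating.
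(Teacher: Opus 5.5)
Your proof is correct and carries out the route the paper has in mind. The paper writes no proof; it only says that, given the classification of compact surfaces, the fact is an exercise via fundamental groups or homology, and you do it with the classification, $H_1(\Delta,\partial\Delta;\mathbb Z/2)\cong \tilde H_1(\Delta/\partial\Delta;\mathbb Z/2)$, and mod-$2$ intersection with an interior closed curve $\delta$ (note that you only need this intersection count to be a well-defined homomorphism on relative classes, not the nondegeneracy coming from Lefschetz duality).
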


Combining \cref{lem:recognize disks} with \cref{lem:paths null-homotopic in the quotient}, we obtain:

\begin{cor}\label{cor: cell decomposition}
  $\overline G\subset\Sigma$ defines a cell decomposition of $\Sigma$.
\end{cor}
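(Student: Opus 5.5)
To prove that $\overline G$ is a cell decomposition, I would check the defining conditions one face at a time. Local finiteness and $\partial\Sigma\subseteq\overline G$ are built into the hypotheses on $\overline G$. So the real content is that every face $F$ is an open disc whose closure map sends $\partial\mathbb D^2$ onto $\partial F\subseteq \overline G$.

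\emph{Step 1: reduce to a statement about $\widehat F$.} By \cref{prop:bounded diameter}, each face $F$ has $d_\Sigma$-diameter at most $13\Theta$. Since $\Sigma$ is complete, hence proper, the closure $\overline F$ is compact. Local finiteness of $\overline G$ then implies that $\partial F$ meets only finitely many edges, each piecewise smooth of finite length. So the preliminary discussion applies: the completion $\widehat F$ is a compact connected surface with boundary and corners, and it carries the $1$-Lipschitz map $p\colon\widehat F\to\overline F$ with $p^{-1}(\partial F)=\partial\widehat F$. We must also rule out $\partial\widehat F=\emptyset$. This would force $F$ to be a whole closed component of $\Sigma$ disjoint from $\overline G$, which is impossible since $\Sigma$ is connected and $X\neq\emptyset$ is $\theta$-dense.

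\emph{Step 2: recognise the disc.} Take any path $\gamma$ in $\widehat F$ with endpoints in $\partial\widehat F$. \Cref{lem:paths null-homotopic in the quotient} shows that $\pi\circ\gamma$ is null-homotopic in $\widehat F/\partial\widehat F$. \Cref{lem:recognize disks} then gives $\widehat F\cong\mathbb D^2$. Its interior is $F$, because $\partial\widehat F=\widehat F\smallsetminus F$ and $F$ is an open surface. Hence $F$ is homeomorphic to the open unit ball. The homeomorphism $\mathbb D^2\cong\widehat F$ composed with $p$ is continuous on the closed disc and maps $\partial\mathbb D^2$ onto $\partial F\subseteq\overline G$. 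This is exactly the required extension, and it yields the boundary path of $F$.

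\emph{Main obstacle.} The subtle point is Step 1, namely that $\widehat F$ really is a compact surface with boundary whose boundary is a finite union of circles. This needs $\overline F$ compact, which \cref{prop:bounded diameter} supplies, and $\partial F$ to be a finite union of piecewise smooth arcs, which local finiteness supplies. I would make this precise by covering $\partial F$ with small coordinate charts in which only finitely many piecewise smooth edges appear. Within such a chart, the side-wise completion of each local component of $F$ is a sector, so $\widehat F$ is locally a half-plane with corners. A secondary check is that the proof of \cref{lem:paths null-homotopic in the quotient} goes through: prolonging $p\circ\gamma$ along $\overline G$ to points of $X$ uses that every vertex of $\overline G$ is joined within $\overline G$ to $X$, which follows from \eqref{nice metric graph}.
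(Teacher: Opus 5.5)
Your proposal is correct and follows the paper's route. You get compactness of $\overline F$ from \cref{prop:bounded diameter}, then combine \cref{lem:paths null-homotopic in the quotient} with \cref{lem:recognize disks} to conclude that $\widehat F\cong\mathbb D^2$. The extra bookkeeping you supply is what the paper leaves implicit when it says it suffices to show $\widehat F$ is a disc: that $\partial\widehat F\neq\emptyset$, that $\widehat F$ is connected, that its interior is $F$, and that composing with $p$ gives the required extension.
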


\subsection{Cutting into a 3-gonal decomposition}
Having just proved that $\overline{G}\subset \Sigma$ induces a cell decomposition, it is now straightforward to obtain a 3-gonal decomposition $G\subset \Sigma$ by adding extra edges if necessary. However, since \cref{prop:bounded diameter} does not bound the \emph{intrinsic} diameter, the newly added edges may be too long, thus spoiling the metric properties of the embedding. The aim of this subsection is to address this difficulty and establish \eqref{G iii} of \cref{thm: triangulation} while preserving \eqref{G ii}.
\smallskip

Let $F$ be a face of $\overline{G}$, let $\widehat{F}$ be its completion with respect to the intrinsic metric $d^F_\ell$, and $p\colon \widehat{F}\to \overline F$ as in \cref{ssec: bounding topology}.
Observe that the graph structure of $\partial F\subseteq \overline G$ lifts to a graph structure on $\partial\widehat{F}$, where vertices are preimages of vertices and edges are lifts of  edges. Moreover, every edge $e\in \partial \widehat{F}$ has the same length as its image $p(e)\in E(\overline G)$.
Note that $\widehat F$ is a \emph{piecewise smooth polygon} with edge-lengths bounded by $\Theta$. That is, it is a disc whose boundary (considered as a graph) is a cycle consisting of finitely many edges, each of which is piecewise smooth and has length bounded by $\Theta$. Moreover, since $X$ is $\theta$-dense, the polygon $\widehat F$ is also \emph{$\theta$-thin}; that is, every point in $\widehat{F}$ is within distance $\theta$ from $\partial \widehat{F}$.
To conclude the proof of \cref{thm: triangulation}, it is then enough to triangulate such a polygon by edges of controlled length.

\smallskip

Recall that a curve is an $\epsilon$-geodesic if its length realizes the distance of its endpoints up to $\epsilon$ (\cref{ssec: epsilon geodesics}).

\begin{lem}\label{lem: thin geodesic polygons}
If $e\subset\partial P$ is an edge of a $\theta$-thin polygon $P$, and $e$ is an $\epsilon$-geodesic, then every point in $e$ is within distance $2\epsilon+3\theta$ from $\partial P\smallsetminus e$.  
\end{lem}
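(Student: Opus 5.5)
The plan is to argue by separation inside the disc $P$ (with its intrinsic metric $d$), using thinness to find a single point $y$ that is simultaneously close to $\partial P\smallsetminus e$ and close to $e$ on \emph{both} sides of a given point $z\in e$; the $\epsilon$-geodesic property then forces $z$ itself to be close to $y$.

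\emph{Step 1: a closed cover.} Fix $z\in e$ and set
\[
A=\{y\in P\mid d(y,e)\le\theta\},\qquad B=\{y\in P\mid d(y,\partial P\smallsetminus e)\le\theta\}.
\]
Both sets are closed. Since $P$ is $\theta$-thin, every point is within $\theta$ of $\partial P=e\cup(\partial P\smallsetminus e)$, so $A\cup B=P$. If $z\in B$ there is nothing to prove, so assume $z\notin B$.

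\emph{Step 2: the frontier of $W$.} Let $W$ be the connected component of $P\smallsetminus B$ containing $z$. Since $\partial P\smallsetminus e\subseteq B$, the frontier $\Phi$ of $W$ in $P$ separates $z$ from $\partial P\smallsetminus e$, and $\Phi\subseteq A\cap B$. The endpoints $a,b$ of $e$ lie in $\partial P\smallsetminus e$ (in its closure), and $P$ is a disc. By a Jordan/unicoherence-type argument for the disc, some connected piece $\Phi_0\subseteq\Phi$ meets $e$ on both sides of $z$, i.e.\ it runs from the subarc $[a,z)\subseteq e$ to the subarc $(z,b]\subseteq e$.

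\emph{Step 3: a point with nearest points on both sides.} For $y\in\Phi_0$, let $L$ (resp.\ $R$) be the set of $y$ admitting a point $w$ of the subarc $[a,z]$ (resp.\ $[z,b]$) with $d(y,w)\le\theta$. Both $L$ and $R$ are closed by compactness, their union is $\Phi_0$ (because $\Phi_0\subseteq A$), and each is nonempty by the choice of $\Phi_0$. Connectedness of $\Phi_0$ then yields $y\in L\cap R$. So we get $w_1\in[a,z]$ and $w_2\in[z,b]$ with $d(y,w_i)\le\theta$, and moreover $d(y,\partial P\smallsetminus e)\le\theta$ since $y\in B$.

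\emph{Step 4: the estimate.} The subarc of $e$ from $w_1$ to $w_2$ is again an $\epsilon$-geodesic, because subarcs of $\epsilon$-geodesics are $\epsilon$-geodesics, and it passes through $z$. Hence
\[
d(w_1,z)+d(z,w_2)\le d(w_1,w_2)+\epsilon\le 2\theta+\epsilon,
\]
so $\min_i d(z,w_i)\le\theta+\epsilon/2$. By the triangle inequality through $w_i$ and then $y$,
\[
d(z,\partial P\smallsetminus e)\le(\theta+\epsilon/2)+\theta+\theta\le 2\epsilon+3\theta.
\]

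The main obstacle is Step 2: extracting a connected piece of the frontier that links the two sides of $z$ along $e$. I would do this by contradiction using the disc structure. If no component of $\Phi$ did so, then $z$ could be joined to $\partial P\smallsetminus e$ avoiding $\Phi$, by going around the frontier components, which would contradict the separation. Concretely, I would use the fact that in a disc a closed set separating a boundary point from a boundary arc must contain a continuum joining the two complementary boundary arcs (the ``crosscut'' lemma, i.e.\ unicoherence of $\mathbb D^2$). If this turns out to be delicate, I would first replace $B$ by a slightly larger piecewise smooth subsurface via \cref{lem:regular neighbourhood}, so that the frontier is a finite union of arcs, at the cost of an arbitrarily small loss in the constants.
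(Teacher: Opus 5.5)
Your proof is correct, and it takes a genuinely different route from the paper.

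\textbf{The paper's route.} It localises first. It takes a subsurface $N$ with $B(x;\epsilon+2\theta)\subseteq N\subseteq B(x;2\epsilon+2\theta)$ and assumes $N$ misses $\partial P\smallsetminus e$. It then works on the boundary circle $\gamma$ of $N$ through $x$. An arc $\gamma'\subseteq\gamma\smallsetminus e$ joins $e_-$ to $e_+$, the two parts of $e$ outside the $(\epsilon/2+\theta)$-ball around $x$. The $\epsilon$-geodesic property shows that no point of $\gamma'$ is $\theta$-close to both $e_-$ and $e_+$. By connectedness, some point of $\gamma'$ is $\theta$-close to neither, hence $\theta$-close to $\partial P\smallsetminus e$. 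Closeness to $x$ is automatic because $\gamma'\subseteq N$.

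\textbf{Your route.} You reverse the order. Your connected set is a component of the frontier of the region of points at distance $>\theta$ from $\partial P\smallsetminus e$, and every point of it is $\theta$-close to both $e$ and $\partial P\smallsetminus e$. Connectedness produces a point $\theta$-close to both sides of $z$. Only then do you use the $\epsilon$-geodesic property, on the subarc from $w_1$ to $w_2$, to pull this back to $z$.

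\textbf{What each buys.} Your route gives a sharper constant, $3\theta+\epsilon/2$ instead of $2\epsilon+3\theta$. The price is heavier point-set topology. Your $\Phi$ is an arbitrary compact set, so you need unicoherence of the disc: in a locally connected unicoherent continuum, a closed set separating two points has a component separating them. The paper only needs that a boundary component of a compact subsurface is a simple closed curve.

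\textbf{Completing your Step 2.} It goes through as you sketch, with one point worth making explicit. $W$ misses the open $\theta$-neighbourhood of $\partial P\smallsetminus e$, so $\Phi$ lies at distance exactly $\theta$ from $\partial P\smallsetminus e$. Hence $\Phi$ misses the connected set $\overline{\partial P\smallsetminus e}$, which contains $a$ and $b$, and it also misses $z$. A component $\Phi_0$ separating $z$ from this set must then meet both $[a,z)$ and $(z,b]$. Otherwise one of the two subarcs of $e$ would join $z$ to $a$ or $b$ while avoiding $\Phi_0$.

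Your fallback via a smoothed version of $B$ also works. The slack between $3\theta+\epsilon/2$ and $2\epsilon+3\theta$ absorbs the arbitrarily small loss.
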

\begin{proof}
  Given $x\in e$, let $N$ be a subsurface with $B(x;\epsilon+2\theta)\subseteq N\subseteq B(x;2\epsilon+2\theta)$, provided by \cref{lem:regular neighbourhood}. If $N$ intersects $\partial P\smallsetminus e$, there is nothing to do.
  If that is not the case, note that $\partial N$ is contained in $e\cup (P\smallsetminus B(x;\epsilon+2\theta))$ (here $\partial N$ denotes the surface boundary, not the topological boundary of the subset $N\subset P$).
  Let $e_0$ be the component of $e \cap B(x;\epsilon/2+\theta)$ containing $x$, and decompose $e$ into (non-empty) segments as $e=e_-\cup e_0\cup e_+$.
  
  Observe that no point in $\partial N\smallsetminus e$ can be within distance $\theta$ from both $e_-$ and $e_+$, because otherwise $e$ would not be an $\epsilon$-geodesic since $\abs{e_0}\geq 2\theta+\epsilon$. On the other hand the component of $\partial N$ containing $x$ is a simple closed curve $\gamma$ that contains $e_0$. It follows that $\gamma\smallsetminus e$ must have a component $\gamma'$ that joins a point in $e_-$ to a point in $e_+$.
  Since $\gamma'$ is contained in $P\smallsetminus B(x;\epsilon+2\theta)$, it contains points at distance greater than $\theta$ from $e_-$ and $e_+$ respectively, and since it is connected it must also contain a point that is at distance greater than $\theta$ from both $e_-$ and $e_+$ simultaneously.
  That point is also at distance greater than $\theta$ from $e_0\subset B(x;\epsilon/2+\theta)$, so it is not $\theta$-close to $e$, and must hence be in the $\theta$-neighbourhood of $\partial P\smallsetminus e$.
\end{proof}

\begin{prop}\label{prop: triangulate polygons}
  Let $\theta<\Theta/2$ and let $P$ be a $\theta$-thin piecewise smooth $n$-gon with edge-length bounded by $\Theta$ and $n>  3$.
  We may triangulate $P$ by adding finitely many arcs of length at most $12(3\theta + \Theta)$ and no new vertices.
\end{prop}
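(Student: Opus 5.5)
The plan is to argue by induction on $n$: I will find a single diagonal of controlled length that cuts $P$ into two polygons with fewer sides, each again $\theta$-thin with edge-lengths bounded by $\Theta'$. To keep the induction from blowing up the constants, I will use diagonals that are $\epsilon$-geodesics (with $\epsilon$ tiny, via \cref{cor: epsilon geodesics in minimal position}). Their length will be bounded by a fixed quantity $L\le 12(3\theta+\Theta)$, independent of the original edge lengths. The resulting pieces have edges that are either original edges (length $\le\Theta$) or $\epsilon$-geodesic diagonals (length $\le L$). Since thinness is inherited by subpolygons only in a weakened form, I will instead choose all diagonals \emph{at once} in the original $P$. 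Concretely, I will first select a family of vertex pairs forming a combinatorial triangulation of the $n$-gon in which every chosen pair is at intrinsic distance at most $L-\epsilon$. I will then realize all chosen pairs simultaneously by simple $\epsilon$-geodesic arcs pairwise in minimal position, using \cref{cor: epsilon geodesics in minimal position}. Because the pairs come from a planar triangulation, no two of them have separating endpoints, so minimal position means the arcs are interior-disjoint. They therefore cut $P$ into triangles.

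The combinatorial core is the following claim: \emph{if $n>3$, there exist two non-adjacent vertices $u,v$ of $P$ with $d_P(u,v)\le L'$ (for suitable $L'$) such that each of the two boundary arcs between them has controlled metric thinness}. For the existence of a short chord I will use thinness as follows. Pick a boundary vertex $v$ and walk along $\partial P$. By thinness, every point of $P$ is $\theta$-close to $\partial P$. Take a near-geodesic path from the midpoint region of an edge $e$ adjacent to $v$ inward. Alternatively, and more cleanly, apply \cref{lem: thin geodesic polygons} to a geodesic-like edge. If the edges of $P$ themselves are not $\epsilon$-geodesic, I first replace each edge by an $\epsilon$-geodesic with the same endpoints; this is legitimate since $\Theta$ bounds each edge length and hence the chord length. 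This replacement produces thin 2-gon slivers that are handled separately. Then every point of an edge $e$ lies within $2\epsilon+3\theta$ of $\partial P\smallsetminus e$. By a continuity argument along $e$ (walking from one endpoint to the other), some point of $e$ is close to a part of $\partial P\smallsetminus e$ that is not one of the two edges adjacent to $e$. Otherwise the closeness set switches directly between the two neighbors, yielding a point near both neighbors; that situation gives a vertex-to-vertex chord skipping $e$. In either case I obtain two non-adjacent vertices at distance at most roughly $2\Theta + 2(3\theta+2\epsilon)$, by moving along edges of length $\le \Theta$ to vertices. This yields the constant $12(3\theta+\Theta)$ with room to spare.

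To iterate, I expect to repeat the short-chord argument inside each subpolygon. The main obstacle is that subpolygons are bounded by chords of length up to $L$, not $\Theta$, and that thinness with respect to the new boundary is fine, since the boundary only grew. The edge-length bound, however, degrades. I will handle this by always measuring distances in the original $P$ and noting that each chord is an $\epsilon$-geodesic. \cref{lem: thin geodesic polygons} applies precisely to $\epsilon$-geodesic edges, so the chord-finding argument uses only the $\epsilon$-geodesic property of long sides plus $\Theta$ for original sides. To keep the bound from accumulating, I will choose each new chord to have an endpoint that is an endpoint of an original (short) edge or of an $\epsilon$-geodesic side, with length controlled by $2(3\theta+2\epsilon)$ plus at most one original edge on each side. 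The factor 12 gives slack for this bookkeeping.
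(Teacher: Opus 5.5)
Your reduction is sound: if you can choose a combinatorial triangulation of the $n$-gon all of whose diagonals join vertices at $d_P$-distance at most $L-\epsilon$, with $L\le 12\kappa$ and $\kappa=3\theta+\Theta$, then \cref{cor: epsilon geodesics in minimal position} realises all of them at once by interior-disjoint simple $\epsilon$-geodesic arcs, and the faces are triangles. But the existence of such a choice is the entire content of the proposition, and you do not prove it.

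Your short-chord argument is empty. For $n>3$ the vertices $v_i,v_{i+2}$ are already non-adjacent and at distance at most $2\Theta$. All the difficulty lies in the iteration, which your third paragraph only asserts. The concrete obstruction is that a side $\alpha$ of a subpolygon $Q$ may itself be a long chord. \cref{lem: thin geodesic polygons} applied in $Q$ then only says that points of $\alpha$ are close to $\partial Q\smallsetminus\alpha$. Near the middle of $\alpha$ this may be \emph{another} long chord, with no vertex nearby. So your claimed control ``$2(3\theta+2\epsilon)$ plus at most one original edge on each side'' has no justification.

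Unguided admissible choices really do get stuck, whatever target bound $L$ you fix. Take a flat strip of width $w<\theta$ and length $\ell$ with $\ell+\epsilon\le L<\sqrt{\ell^2+w^2}$. Place vertices at most $\Theta$ apart along both long sides, with at least one interior vertex on each. The two chords joining the end-vertices of the top side and of the bottom side are admissible. Yet they cut out a quadrilateral, bounded by the two chords and the two short end edges, both of whose diagonals have length $\sqrt{\ell^2+w^2}>L$. So some global or corrective mechanism is indispensable, and your proposal supplies none.

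This is exactly what the paper's proof provides. It takes a family $A$ of interior-disjoint $\epsilon$-geodesic diagonals of length less than $12\kappa$ of \emph{maximum cardinality*}, and shows that a non-triangular face $F$ yields a larger family.

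\begin{itemize}
\item Such an $F$ has a side $\alpha_0\in A$ of length at least $6\kappa$.
\item \cref{lem: thin geodesic polygons} is applied not in $F$ but in the piece $P'$ of the original polygon cut off by $\alpha_0$. The other sides of $P'$ are original edges of length at most $\Theta$. So every $m\in\alpha_0$ at distance at least $\kappa$ from both endpoints has a vertex $w_m$ within distance $\kappa$.
\item The chords of $A$ crossing the short path $\beta_m$ from $m$ to $w_m$ form a forest. They are deleted and their endpoints are reconnected to $w_m$; the new arcs are short because $w_m$ is a closest vertex to $m$.
\item An Euler count, plus a continuity argument in $m$ when the forest is connected, shows that the cardinality strictly increases.
\end{itemize}

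The idea missing from your proposal is an exchange argument of this kind, which handles the chords separating a long side from the nearest original vertex.
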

\begin{proof}
  Let $\kappa\coloneqq 3\theta + \Theta$.
  We arbitrarily fix a small $0<\epsilon< \Theta/4$.
  Let $A=\{\alpha_i\mid i=1,\ldots ,n\}$ be a family of curves of maximal cardinality such that
  \begin{itemize}
    \item each $\alpha_i$ is a piecewise smooth $\epsilon$-geodesic of length less than $12\kappa$ that join non-adjacent vertices in $\partial P$ and is otherwise contained in the interior of $P$;
    \item no pair of vertices in $\partial P$ is joined by more than one curve in $A$;
    \item the curves $\alpha_i$ do not intersect in the interior of $P$.
  \end{itemize}
  We claim that $A$ yields the required triangulation of $P$. 
  
  We prove the claim by contradiction, assuming that $P\smallsetminus A$ has a face $F$ that is not a triangle.
  Observe that  
  \labtequ{alpha_zero}{$\partial F$ must contain an edge $\alpha_0$ of length at least $6\kappa$,} because otherwise we may dissect $F$ further using $\epsilon$-geodesics of length less than $12\kappa$ by joining non-consecutive vertices in $\partial P$ at distance less than $12\kappa$ (\cref{cor: epsilon geodesics in minimal position}).
  Since the edges in $\partial P$ have length at most $\Theta$, we deduce that $\alpha_0$ must be one of the curves in $A$.

  The $\epsilon$-geodesic $\alpha_0$ cuts $P$ into two piecewise smooth polygons, $P'$ and $P''$, both of which are $\theta$-thin. We may assume that $F$ is contained in $P'$. Let $v_-,v_+\in\partial P'$ be the endpoints of $\alpha_0$.  
  By \cref{lem: thin geodesic polygons} applied to $P'$, every point $m\in\alpha_0$ is within distance $3\theta+\epsilon$ from $\partial P'\smallsetminus \alpha_0$. Therefore, there must be a vertex in $\partial P'$ that is at distance at most 
  \begin{equation}\label{eq:distave of w_m}
    3\theta +\Theta/2+\epsilon < \kappa
  \end{equation}
  from $m$.
  Let $w_m$ be a vertex of $P'$ that is closest to $m$. 

  \begin{figure}    
    \begin{tikzpicture}[y=1cm, x=1cm, yscale=\globalscale*1.2,xscale=\globalscale*1.2, every node/.append style={scale=1}, inner sep=0pt, outer sep=0pt]
      \path[draw=black,line cap=butt,line join=miter,] (8.2649, 27.3177)node[bnode]{}.. controls (8.2851, 27.5292) and (8.388, 27.7222) .. (8.1711, 27.9865)node[bnode]{}.. controls (8.4372, 28.0852) and (8.4728, 28.3184) .. (8.5264, 28.5411)node[bnode]{}.. controls (8.7804, 28.3985) and (9.0364, 28.2922) .. (9.3038, 28.3821)node[bnode]{}.. controls (9.5459, 28.3402) and (9.7814, 28.4058) .. (10.0089, 28.6039)node[bnode]{}.. controls (10.1601, 28.4521) and (10.312, 28.3011) .. (10.5464, 28.2594)node[bnode]{}.. controls (10.4124, 28.1221) and (10.298, 27.975) .. (10.403, 27.7189)node[bnode]{}.. controls (10.2297, 27.6097) and (10.0345, 27.5318) .. (9.9572, 27.2855)node[bnode]{}.. controls (9.5287, 27.3099) and (9.2745, 27.1695) .. (8.9975, 27.0505)node[bnode]{}.. controls (8.7897, 27.2455) and (8.5451, 27.3336) .. (8.2649, 27.3177) -- cycle;

      \fill[black!15](8.5264, 28.5411).. controls (8.6749, 28.0487) and (8.5334, 27.6604) .. node[pos=0.5,black,xshift=8]{$F$}(8.2649, 27.3177)
        .. controls (8.9068, 27.5757) and (9.6297, 27.6916) .. (10.403, 27.7189)
        .. controls (10.1624, 27.9639) and (9.9745, 28.2331) .. (10.0089, 28.6039)
        .. controls (9.5323, 27.8984) and (9.0355, 27.9808) .. (8.5264, 28.5411);

      \path[draw=black,thick,line cap=butt,line join=miter,] (8.2649, 27.3177).. controls (8.9068, 27.5757) and (9.6297, 27.6916) .. node[pos=0.3,yshift=-8]{$\alpha_0$}(10.403, 27.7189);

      \path[draw=black,thick,line cap=butt,line join=miter,] (8.5264, 28.5411).. controls (8.6749, 28.0487) and (8.5334, 27.6604) .. (8.2649, 27.3177);

      \path[draw=black,thick,line cap=butt,line join=miter,] (10.0089, 28.6039).. controls (9.5323, 27.8984) and (9.0355, 27.9808) .. (8.5264, 28.5411);

      \path[draw=black,thick,line cap=butt,line join=miter,] (10.403, 27.7189).. controls (10.1624, 27.9639) and (9.9745, 28.2331) .. (10.0089, 28.6039);

      \path[draw=black,thick,line cap=butt,line join=miter,] (8.9975, 27.0505).. controls (9.6698, 27.5962) and (9.9949, 27.5918) .. (10.403, 27.7189);

      \path[draw=black,line cap=butt,line join=miter,] (9.3265, 27.6075)node[bnode]{}node[yshift=-8]{$m$}.. controls (9.2432, 27.9339) and (9.2942, 28.3171) .. node[pos=0.3,xshift=-8]{$\beta_m$}(9.3038, 28.3821)node[yshift=8]{$w_m$};


      \draw  (10.75, 27.8)node{$\Longrightarrow$};

      \begin{scope}[xshift=80]
        
      \path[draw=black,line cap=butt,line join=miter,] (8.2649, 27.3177)node[bnode]{}.. controls (8.2851, 27.5292) and (8.388, 27.7222) .. (8.1711, 27.9865)node[bnode]{}.. controls (8.4372, 28.0852) and (8.4728, 28.3184) .. (8.5264, 28.5411)node[bnode]{}.. controls (8.7804, 28.3985) and (9.0364, 28.2922) .. (9.3038, 28.3821)node[bnode]{}.. controls (9.5459, 28.3402) and (9.7814, 28.4058) .. (10.0089, 28.6039)node[bnode]{}.. controls (10.1601, 28.4521) and (10.312, 28.3011) .. (10.5464, 28.2594)node[bnode]{}.. controls (10.4124, 28.1221) and (10.298, 27.975) .. (10.403, 27.7189)node[bnode]{}.. controls (10.2297, 27.6097) and (10.0345, 27.5318) .. (9.9572, 27.2855)node[bnode]{}.. controls (9.5287, 27.3099) and (9.2745, 27.1695) .. (8.9975, 27.0505)node[bnode]{}.. controls (8.7897, 27.2455) and (8.5451, 27.3336) .. (8.2649, 27.3177) -- cycle;

      \path[draw=black,thick,line cap=butt,line join=miter,] (8.2649, 27.3177).. controls (8.9068, 27.5757) and (9.6297, 27.6916) .. node[pos=0.3,yshift=-8]{$\alpha_0$}(10.403, 27.7189);

      \path[draw=black,thick,line cap=butt,line join=miter,] (8.5264, 28.5411).. controls (8.6749, 28.0487) and (8.5334, 27.6604) .. (8.2649, 27.3177);


      \path[draw=black,thick,line cap=butt,line join=miter,] (10.403, 27.7189).. controls (10.1624, 27.9639) and (9.9745, 28.2331) .. (10.0089, 28.6039);

      \path[draw=black,thick,line cap=butt,line join=miter,] (8.9975, 27.0505).. controls (9.6698, 27.5962) and (9.9949, 27.5918) .. (10.403, 27.7189);

      \path[draw=black,line cap=butt,line join=miter,densely dotted] (9.3265, 27.6075)node[bnode]{}node[yshift=-8]{$m$}.. controls (9.2432, 27.9339) and (9.2942, 28.3171) .. node[pos=0.3,xshift=-8]{}(9.3038, 28.3821)node[yshift=8]{$w_m$};

      \path[draw=black,line cap=butt,line join=miter,thick,miter limit=4.0] (8.2649, 27.3177).. controls (8.7551, 27.5867) and (9.1397, 27.9187) .. (9.3038, 28.3821).. controls (9.4787, 28.0831) and (9.6034, 27.7637) .. (10.403, 27.7189);
      \end{scope}
    \end{tikzpicture}
    \caption{The thick lines are disjoint $\epsilon$-geodesics. In the case where the curves crossing the short path $\beta_m$ are not connected, removing them and adding $\epsilon$-geodesics from their endpoints to $v_m$ increases the cardinality of the family}\label{fig:constructing beta_m}
  \end{figure}
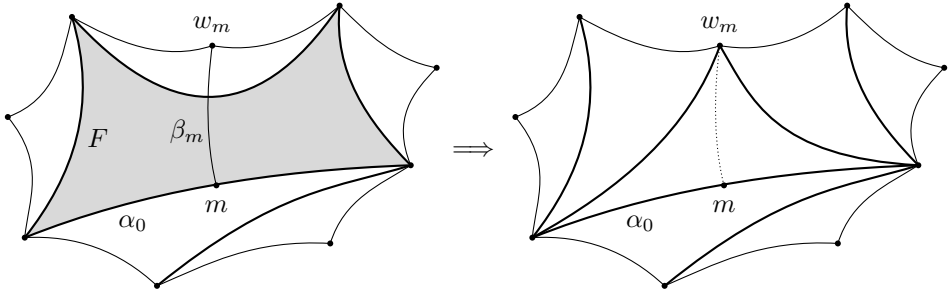

  Observe that if $m$ is chosen so that both $d(v_-,m)$ and $d(m,v_+)$ are at least $\kappa$, then $w_m$ is neither $v_-$ nor $v_+$, and is hence some other vertex in $\partial P'\smallsetminus \alpha_0$.
  Given such a point $m$, let $\beta_m$ be a piecewise smooth $\delta$-geodesic connecting $m$ and $w_m$ in $P'$, where $\delta>0$ is small enough that we may apply \cref{cor: epsilon geodesics in minimal position} to modify curves in $A\smallsetminus\{\alpha_0\}$ so that they are still $\epsilon$-geodesic and are in minimal position with respect to $\beta_m$. Moreover, by \eqref{eq:distave of w_m}, $\delta$ can be chosen small enough that   
  \begin{equation}\label{eq:lenth of beta}
    \abs{\beta_m}+\epsilon\leq (3\theta +\Theta/2+\epsilon) +\delta+\epsilon < \kappa = 3\theta +\Theta.
  \end{equation}

  Let $B_m\subseteq A$ be the set of curves that intersect $\beta_m$ outside $\partial P$.
  We observe that, as a graph, $B_m$ contains no closed loop, \emph{i.e.}\ it is a forest, and it is non-empty because $\alpha_0$ belongs to it. By Euler's formula, we thus have
  \[
    \abs{V(B_m)}=\abs{B_m} + \abs{\{\text{components of }B_m\}}\geq \abs{B_m}+1.
  \]
  Also note that if $\alpha\in B_m$ intersects $\beta_m$ in some point $x$ which splits $\alpha$ as $\alpha_-\cup \alpha_+$, then
  \labtequ{shortest intersection}{both $\abs{\alpha_-}$ and $\abs{\alpha_+}$ are at least  $d(x,w_m)$,}
  because otherwise $w_m$ would not be a closest vertex to $m$. One can hence perform the following construction:
  \labtequ{replacing curves}{
  Remove every curve in $B_m$ from $A$. In their place, add for every vertex in $V(B_m)$ which is not adjacent to $w_m$ in $\partial P'$ one arc connecting it with $w_m$.
  Then add back $\alpha_0$ and apply \cref{cor: epsilon geodesics in minimal position}, to make these curves into piecewise smooth $\epsilon$-geodesic arcs in minimal position (\cref{fig:constructing beta_m}). 
  Denote by $A_m$ be the resulting family of curves.
  }

  By \eqref{shortest intersection}, the curves in $A_m$ can be taken to have length less than $12\kappa$: if $v$ is an endpoint of $\alpha\in B_m$, it can be joined to $w_m$ by following $\alpha$ until it reaches $\beta_m$ and then following the latter until $w_m$.  
  We would like to reach a contraction by showing that $\abs{A_m}>\abs{A}$.

  We argue by cases. If $w_m\in\partial F$ then $B_m$ consisted uniquely of the curve $\alpha_0$. Since $F$ is not a triangle, at least one between $v_-$ and $v_+$ is was not adjacent to $w_m$ in $\partial F$, thus $A_m$ now has strictly more curves than $A$, so we are done.
  We may hence assume that $w_m\notin\partial F$, and hence $\abs{B_m}>1$. Since $w_m$ is adjacent to at most two vertices, we see that 
  \begin{equation}\label{eq:number of curves}
    \abs{A_m}\geq \abs{A\smallsetminus B_m} + (\abs{V(B_m)}+1)-2\geq \abs{A}.
  \end{equation}
  If $B_m$ is not connected, the above inequality is strict.

  It remains to deal with the case where $\abs{B_m}>1$ and $B_m$ is connected. This case is the most delicate, and to deal with it we may have to make different choices of $m$ (see below).

  Let $\alpha_m\in A_m$ be the first curve encountered by $\beta$ after $\alpha_0$. Since $\alpha_m$ disconnects $P'$ and $\beta$ intersects it exactly once (because they are in minimal position), we deduce that $\alpha_m$ must meet $\alpha_0$ at one of its endpoints, for otherwise $B_m$ would not be connected. Let $v_m\in\partial F$ denote the other end point of $\alpha_m$.

  If $\alpha_m\cap \alpha_0=v_+$, then $v_m$ is not adjacent to $v_-$ in $\partial F$, so if $d_{F}(v_-,v_m)$ was less than $12\kappa$ we could add another curve to $A_m$ and conclude the proof. A symmetric argument applies if $\alpha_m\cap \alpha_0=v_-$.
  This reduces to the following case: for every $m\in \alpha_0$ with $d(v_-,m),d(m,v_+)\geq \kappa$, the curve $\beta_m$ meets some $\alpha_m\subset \partial F$ connecting one of $v_-$ or $v_+$ to a point $v_m$ at $d_F$-distance at least $12\kappa$ from the other one.
  
  We claim that if $d_F(v_-,m) =\kappa$, then $\alpha_m$ must intersect $\alpha_0$ at $v_-$. 
  Suppose this is not the case, and let $x_m= \alpha_m\cap \beta_m$. Then 
  \begin{align*}
    d_F(x_m,v_+) &\geq d_F(v_+,m) - d_F(x_m,m) \\
     &\geq (\abs{\alpha_0} -\epsilon - \kappa) -\abs{\beta_m} \\
     &\geq 6\kappa - \kappa - (\abs{\beta}+\epsilon) \geq 4\kappa,
  \end{align*}
  where we used $\epsilon$-geodesicity of $\alpha_0$ at the second step.
  But then
  \begin{align*}
    d_F(v_-,v_m)&\leq d_F(v_-,x_m) + d_F(x_m,v_m) \\
      &\leq (d_F(v_-,m) + d_F(m,x_m)) + (\abs{\alpha_m} - d_F(x_m,v_+)) \\
      &\leq (\kappa + \abs{\beta}) + (12\kappa - 4\kappa) < 12\kappa.
  \end{align*}
  The symmetric holds if $d_F(v_+,m) =\kappa$.

  By continuity, this implies that there must be some point $m\in\alpha_0$ for which there are two possible choices of $w_m$, one of which yields as $\alpha_m$ a curve $\alpha_-\in A$ containing $v_-$, and the other yields as $\alpha_m$ a curve $\alpha_+\in A$ containing $v_+$. We denote those vertices $w_m^-$ and $w_m^+$ respectively, and  define $\beta_m^-$, $\beta_m^+$  similarly (\cref{fig:two beta_m}).

  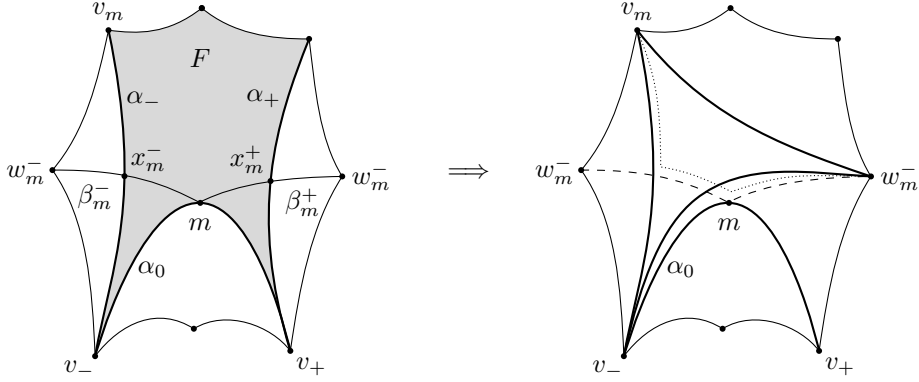
\begin{figure}
    \begin{tikzpicture}[y=1cm, x=1cm, yscale=\globalscale,xscale=\globalscale, every node/.append style={scale=1}, inner sep=0pt, outer sep=0pt]

      \fill[black!15](10.1393, 26.2547).. controls (9.88, 26.2533) and (9.6323, 26.2891) .. (9.426, 26.4597)node[bnode]{} .. controls (9.1977, 26.2815) and (9.0002, 26.2874) .. (8.8059, 26.3127)
      .. controls (9.0439, 25.1772) and (8.8248, 24.7465) .. (8.7169, 24.1443) .. controls (8.7169, 24.1443) and (9.0039, 25.1752) .. (9.4158, 25.164).. controls (9.7997, 25.1535) and (10.0146, 24.1795) ..(10.0146, 24.1795).. controls(9.7339, 25.0965) and (9.9145, 25.6911) ..  (10.1393, 26.2547);

      \path[draw=black,line cap=butt,line join=miter,]  (10.0146, 24.1795)   node[bnode]{}.. controls (10.0767, 24.6001) and (10.1488, 25.0143) .. (10.3595, 25.3399)node[bnode]{}.. controls (10.2041, 25.5906) and (10.1802, 25.9283) .. (10.1393, 26.2547)node[bnode]{}.. controls (9.88, 26.2533) and (9.6323, 26.2891) .. (9.426, 26.4597)node[bnode]{} .. controls (9.1977, 26.2815) and (9.0002, 26.2874) .. (8.8059, 26.3127)node[bnode]{}.. controls (8.7342, 25.9724) and (8.6521, 25.6382) .. (8.4331, 25.382)node[bnode]{}.. controls (8.6645, 25.0536) and (8.6977, 24.6033) .. (8.7169, 24.1443)node[bnode]{}.. controls (8.8887, 24.3695) and (9.1366, 24.478) .. (9.3732, 24.3268)node[bnode]{}.. controls (9.5913, 24.4601) and (9.8888, 24.3462) .. (10.0146, 24.1795);

      \path[draw=black,line cap=butt,line join=miter,thick] (8.7169, 24.1443).. controls (8.7169, 24.1443) and (9.0039, 25.1752) .. node[pos=0.6,yshift=-5,xshift=6]{$\alpha_0$}(9.4158, 25.164)node[bnode]{}node[yshift=-8]{$m$}node[yshift=55]{$F$}.. controls (9.7997, 25.1535) and (10.0146, 24.1795) ..(10.0146, 24.1795) ;

      \path[draw=black,line cap=butt,line join=miter,thick] (8.7169, 24.1443)node[yshift=-5,xshift=-6]{$v_-$}.. controls (8.8248, 24.7465) and (9.0439, 25.1772) .. node[pos=0.85,yshift=0,xshift=9]{$\alpha_-$} node[pos=0.65,bnode]{}node[pos=0.65,xshift=9,yshift=7]{$x_m^-$} (8.8059, 26.3127)node[yshift=9]{$v_m^-$};

      \path[draw=black,line cap=butt,line join=miter,thick] (10.1393, 26.2547).. controls (9.9145, 25.6911) and (9.7339, 25.0965) .. node[pos=0.23,yshift=0,xshift=-9]{$\alpha_+$}node[pos=0.52,bnode]{}node[xshift=-9,yshift=6]{$x_m^+$} (10.0146, 24.1795)node[yshift=-6,xshift=8]{$v_+$};

      \path[draw=black,line cap=butt,line join=miter,] (8.4331, 25.382)node[xshift=-10]{$w_m^-$}.. controls (8.9605, 25.3875) and (9.1922, 25.2797) .. node[pos=0.2,yshift=-9]{$\beta_m^-$}(9.4104, 25.1674).. controls (9.6709, 25.2994) and (10.0034, 25.335) .. node[pos=0.75,yshift=-9]{$\beta_m^+$}(10.3595, 25.3399)node[xshift=11]{$w_m^-$};

      \draw (11.2, 25.36) node{$\Longrightarrow$};

    \begin{scope}[xshift=100]
        \path[draw=black,line cap=butt,line join=miter,]  (10.0146, 24.1795)   node[bnode]{}.. controls (10.0767, 24.6001) and (10.1488, 25.0143) .. (10.3595, 25.3399)node[bnode]{}.. controls (10.2041, 25.5906) and (10.1802, 25.9283) .. (10.1393, 26.2547)node[bnode]{}.. controls (9.88, 26.2533) and (9.6323, 26.2891) .. (9.426, 26.4597)node[bnode]{} .. controls (9.1977, 26.2815) and (9.0002, 26.2874) .. (8.8059, 26.3127)node[bnode]{}.. controls (8.7342, 25.9724) and (8.6521, 25.6382) .. (8.4331, 25.382)node[bnode]{}.. controls (8.6645, 25.0536) and (8.6977, 24.6033) .. (8.7169, 24.1443)node[bnode]{}.. controls (8.8887, 24.3695) and (9.1366, 24.478) .. (9.3732, 24.3268)node[bnode]{}.. controls (9.5913, 24.4601) and (9.8888, 24.3462) .. (10.0146, 24.1795);

        \path[draw=black,line cap=butt,line join=miter,thick] (8.7169, 24.1443).. controls (8.7169, 24.1443) and (9.0039, 25.1752) .. node[pos=0.6,yshift=-5,xshift=6]{$\alpha_0$}(9.4158, 25.164)node[bnode]{}node[yshift=-8]{$m$}.. controls (9.7997, 25.1535) and (10.0146, 24.1795) ..(10.0146, 24.1795) ;

        \path[draw=black,line cap=butt,line join=miter,thick] (8.7169, 24.1443)node[yshift=-5,xshift=-6]{$v_-$}.. controls (8.8248, 24.7465) and (9.0439, 25.1772) .. (8.8059, 26.3127)node[yshift=9]{$v_m^-$};

        \path[draw=black,line cap=butt,line join=miter,thick] (10.0146, 24.1795)node[yshift=-6,xshift=8]{$v_+$};

        \path[draw=black,line cap=butt,line join=miter,dashed] (8.4331, 25.382)node[xshift=-10]{$w_m^-$}.. controls (8.9605, 25.3875) and (9.1922, 25.2797) ..(9.4104, 25.1674).. controls (9.6709, 25.2994) and (10.0034, 25.335) .. (10.3595, 25.3399)node[xshift=11]{$w_m^-$};

        \path[draw=black,line cap=butt,line join=miter,thick] (8.7169, 24.1443).. controls (8.8325, 24.6698) and (8.9775, 25.0145) .. (9.191, 25.1862).. controls (9.5156, 25.4473) and (9.9563, 25.3632) .. (10.3595, 25.3399)
        (10.3595, 25.3399).. controls (9.6953, 25.5618) and (9.232, 25.7644) .. (8.8059, 26.3127);

        \path[draw=black,line cap=butt,line join=miter,densely dotted] (8.8059, 26.3127).. controls (8.9198, 26.0344) and (8.9614, 25.8669) .. (8.9618, 25.4031).. controls (9.1626, 25.3717) and (9.3004, 25.3024) .. (9.4312, 25.2387).. controls (9.7812, 25.3553) and (9.8348, 25.352) .. (10.3595, 25.3399);
    \end{scope}

    \end{tikzpicture}
    \caption{The thick lines are disjoint $\epsilon$-geodesics. If there is a point $m$ connected to $\partial P$ via two short paths $\beta_m^\pm$ that intersect $\epsilon$-geodesics connected to the starting curve $\alpha_0$, we increase the number of disjoint $\epsilon$-geodesic by removing one of them and adding $\epsilon$-geodesics from the endpoints of the other one.}\label{fig:two beta_m}
  \end{figure}
  
  We may assume that $d(m,v_+)\leq d(m,v_-)$, which implies that 
  \[
    d(m,v_-)\geq (6\kappa-\epsilon)/2.
  \]
  We then perform the construction \eqref{replacing curves} with respect to $w_m^+$ to obtain $A_m$. Let $x_m^-$ be the intersection point of $\alpha_-$ and $\beta_m^-$, and let $v_m^-$ be the endpoint of $\alpha_-$ away from $v_-$. Observe that 
  \begin{align*}
  d_F(v_m^-,x_m^-)&\leq \abs{\alpha_m^-}-d_F(v_-,x_m^-) \\
    &\leq \abs{\alpha_m^-} - (d_F(v_-,m)-d_F(m,x_m^-))\\
    &\leq \abs{\alpha_m^-} - (6\kappa-\epsilon)/2 + \abs{\beta_m^{-}} \\
    &\leq \abs{\alpha_m^-} - 2\kappa.
  \end{align*}
  The curve following $\alpha_-$ from $v_m^-$ to $x_m^-$ and then going to $w_m$ following $\beta_m^-$ and $\beta_m^+$ has length at most
  \[
  d_F(v_m^-,x_m^-)+\abs{\beta_m^-}+\abs{\beta_m^+}\leq \abs{\alpha_m^-}.
  \]
  This means we can add one extra curve to $A_m$, which contradicts the maximality of $A$.
\end{proof}

\begin{proof}[Proof of \cref{thm: filling triangulation}]
{We already know} that $\overline G\subset \Sigma$ is a cell-decomposition (\cref{cor: cell decomposition}), and $\overline G$ is a metric graph with edge lengths bounded by $\Theta$,  and the embedding $\overline G\hookrightarrow\Sigma$ is a $1$-Lipschitz (\cref{rmk:metrizing embedded graphs}) quasi-isometry (\cref{lem: quasi isometry}).

  For every face $F$ of $\overline G\subset \Sigma$, the completion $\widehat F$ with respect to the intrinsic metric is a $\theta$-thin piecewise smooth polygon $P\coloneqq \widehat F$ with edge-length bounded by $\Theta$. Apply \cref{prop: triangulate polygons} to every such polygon to obtain a new piecewise smoothly embedded graph $G'\subset \Sigma$ whose edges have length at most $12(3\theta +\Theta) < 30\Theta$, and such that $G'\subset \Sigma$ is a cell decomposition where all the faces are $\theta$-thin polygons with boundary of size at most $3$. In particular, the intrinsic diameter of every face is bounded by $3/2\cdot 30\Theta +\theta < 46\Theta$.  
  Observe that $G'$ remains locally finite, because each face contributes finitely many new edges. Since $G'$ is obtained from $\overline{G}$ by adding extra edges, the $1$-Lipschitz embedding $G'\hookrightarrow \Sigma$ must be a quasi-isometry as well (adding new edges in this fashion can only improve the lower bound in \eqref{eq:coarse embedding}).

  Finally, we apply \cref{lem: removing 2-gons} to obtain a 3-gonal decomposition $G\subseteq G'\subset \Sigma$ so that the embedding $G'\hookrightarrow\Sigma$ is a quasi-isometry and the faces have intrinsic diameter bounded by $138\Theta$.
\end{proof}

\subsection{Constructing a quasi-isometrically embedded metric graph}\label{ssec:construction of G}
Proving \cref{thm: triangulation} essentially amounts  to exhibiting  $X\subset \overline{ G}\subset \Sigma$ satisfying the hypotheses of \cref{thm: filling triangulation}.

It is easy to see that such a set $X$ exists, for instance by choosing a $\theta$-net in $\Sigma$  (\emph{e.g.}\ picking a maximal $\theta$-separated subset) and adding to it a $\theta$-net of each connected component of $\partial \Sigma$. The resulting set is locally finite because every point in $\Sigma$ has a neighbourhood that intersects at most one component $C$ of $\partial \Sigma$ and this intersection has finite diameter in $(C,d_\ell^C)$.

We will construct a graph $\overline G$ obtained by uniting $\partial \Sigma$ with a family of curves of length at most $\Theta$ joining points in $X$. The key point is to choose sufficiently many curves to represent every (local) homotopy type of short curves.

Formally, enumerate the pairs of points $\{x,y\}\subset X$ with $d_\Sigma(x,y)<\Theta$, and for every such pair choose some compact subsurface (with boundary) $\Sigma_{\{x,y\}}\subseteq \Sigma$ that contains ${B(x;\Theta)}\cap B(y;\Theta)$ and is itself contained in a $\theta/2$-neighbourhood of it (\cref{lem:regular neighbourhood}).
Consider homotopy classes of curves $\gamma\colon [0,1]\to \Sigma_{\{x,y\}}$ joining $x$ to $y$. Observe that there are only finitely many such classes, because $\Sigma_{\{x,y\}}$ is a compact surface. Enumerate the homotopy classes that contain a curve $\gamma$ of length less than $\Theta$.

We will now iteratively choose a family $\Gamma_{\{x,y\}}$ of representatives for these homotopy classes.
Specifically, for every such homotopy class, we choose a representative $\overline \gamma$ that is a piecewise smooth curve of length less than $\Theta$ which may only meet $\partial \Sigma$ at its endpoints and intersects all the previously chosen representatives (including those belonging to $\Gamma_{\{x',y'\}}$ for previously considered pairs $\{x',y'\}\subset X$) in finitely many points. Such a representative can be found \emph{e.g.}\ by perturbing a shortest representative of the homotopy class using \cref{lem: resolve intersections}.

Let
\[
\Gamma_\Sigma\coloneqq \bigcup\bigbrace{\Gamma_{\{x,y\}}\bigmid \{x,y\}\subset X,\ d_\Sigma(x,y)<\Theta}.
\]
Since $X$ is locally finite, and each $\Gamma_{\{x,y\}}$ is a finite collection of curves, the collection $\Gamma_\Sigma$ is locally finite in $\Sigma$.

We let $\overline G$ be the graph traced by $\Gamma\cup \partial\Sigma$.
More precisely, we let 
\[
  V( \overline G)\coloneqq X\cup \bigcup\bigbrace{\overline\gamma\cap\overline\gamma'\bigmid \overline\gamma,\overline\gamma'\in \Gamma}\subset \Sigma
\]
and let $E(\overline G)$ be the set of all the subsegments of curves in $\Gamma$ or $\partial \Sigma$ that connect points in $V( \overline G)$ and do not contain any other vertex in their interior. Note that the edges coming from $\partial\Sigma$ have length bounded by $\Theta$ because  $X\cap\partial\Sigma$ is $\theta$-dense in the intrinsic metric.

{The graph $\overline{G}$} will generally have loops and multiple edges (especially if $\Theta$ is large). By construction, $\overline G$ is embedded in $\Sigma$. 

{
\begin{remark}\label{rmk: no extra vertices on boundary}
  Observe that, by construction, $V(\overline G)\cap \partial \Sigma = X\cap \partial \Sigma$.
\end{remark}
}
\medskip

It only remains to verify \eqref{homotopy approximation}.  This is the content of the next lemma.

\begin{lem}\label{lem:close homotopy}
  Let $\gamma\colon [0,1]\to \Sigma$ be a rectifiable curve with endpoints in $X\subseteq V(\overline G)$. Then there exists a path $\bar\gamma$ in $\overline G\subset \Sigma$ such that
  \begin{enumerate}
    \item $\gamma$ and $\bar\gamma$ are homotopic within the $2\Theta$-neighbourhood of $\gamma$;
    \item $\abs{\bar\gamma}$ satisfies
      \[
      \abs{\bar\gamma}\leq \frac{\Theta}{\Theta-2\theta}\abs{\gamma} + \Theta.
      \]
  \end{enumerate}
\end{lem}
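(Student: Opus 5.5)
The plan is to subdivide $\gamma$ into short pieces, snap the subdivision points to nearby points of $X$, and replace each short piece by a curve from the families $\Gamma_{\{x,y\}}$. Parametrize $\gamma$ by arc length and set $L=\abs{\gamma}$, $s=\Theta-2\theta>0$. Choose times $0=t_0<t_1<\dots<t_k=L$ with $t_{j+1}-t_j\le s$ and $k=\lceil L/s\rceil$. Put $x_0=\gamma(0)$ and $x_k=\gamma(L)$, which lie in $X$. For $0<j<k$, let $x_j\in X$ satisfy $d_\Sigma(x_j,\gamma(t_j))\le\theta$, using $\theta$-density. Let $\sigma_j$ be a geodesic (or a $\delta$-geodesic for a tiny $\delta$) from $\gamma(t_j)$ to $x_j$, of length at most $\theta$.

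Consider the concatenation $\eta_j=\bar\sigma_j\cdot\gamma|_{[t_j,t_{j+1}]}\cdot\sigma_{j+1}$. It runs from $x_j$ to $x_{j+1}$ and has length at most $s+2\theta=\Theta$. I would shrink slightly (e.g.\ $\delta$ in the geodesic choice, or choosing $s$ marginally smaller) so that $\abs{\eta_j}<\Theta$ strictly. Hence $d_\Sigma(x_j,x_{j+1})<\Theta$. Every point of $\eta_j$ is within distance $<\Theta$ of both endpoints, since the remaining length in either direction is $<\Theta$. So $\eta_j\subseteq B(x_j;\Theta)\cap B(x_{j+1};\Theta)\subseteq\Sigma_{\{x_j,x_{j+1}\}}$, and its homotopy class in $\Sigma_{\{x_j,x_{j+1}\}}$ contains a curve of length $<\Theta$. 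The construction therefore provides a representative $\overline\gamma_j\in\Gamma_{\{x_j,x_{j+1}\}}$ homotopic to $\eta_j$ inside $\Sigma_{\{x_j,x_{j+1}\}}$, with $\abs{\overline\gamma_j}<\Theta$. (If $x_j=x_{j+1}$, either the pair is excluded from the enumeration, or I allow loops; I would handle this by treating the degenerate pair as giving a loop class. If only unordered pairs of distinct points are enumerated, I would merge consecutive equal points by dropping the index, which only shortens the count.) The curve $\overline\gamma_j$ is a path in $\overline G$, because it is a union of edges of $\overline G$ between vertices.

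Set $\bar\gamma=\overline\gamma_0\cdots\overline\gamma_{k-1}$. Then $\abs{\bar\gamma}\le k\Theta\le(L/s+1)\Theta=\frac{\Theta}{\Theta-2\theta}\abs{\gamma}+\Theta$, which is the stated bound. For the homotopy, $\gamma$ is homotopic rel endpoints to $\eta_0\cdots\eta_{k-1}$, because the inserted backtracks $\sigma_j\bar\sigma_j$ cancel; this homotopy stays within $\theta$ of $\gamma$. Each $\eta_j$ is in turn homotopic to $\overline\gamma_j$ within $\Sigma_{\{x_j,x_{j+1}\}}$, which lies in the $\theta/2$-neighbourhood of $B(x_j;\Theta)$. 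Since $x_j$ is within $\theta$ of $\gamma$, this region lies within distance $\Theta+\theta+\theta/2<2\Theta$ of $\gamma$, using $\theta<\Theta/2$. Composing these homotopies gives the required one within the $2\Theta$-neighbourhood of $\gamma$.

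The main obstacle is bookkeeping rather than ideas. First, the strict inequality $\abs{\eta_j}<\Theta$ must hold so that the pair is enumerated and the class has a representative of length $<\Theta$. Second, the containment of $\eta_j$ in $\Sigma_{\{x_j,x_{j+1}\}}$ requires care, because balls are open. I would make both checks by choosing $s$ just below $\Theta-2\theta$ and absorbing the slack, noting that the stated bound still holds provided one uses $k\le L/s'+1$ with $s'$ arbitrarily close to $s$, or otherwise tolerating an arbitrarily small loss.
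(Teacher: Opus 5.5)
Your proposal is correct and is essentially the paper's proof: subdivide $\gamma$ at spacing $\Theta-2\theta$, snap the subdivision points to nearest points of $X$, replace each $\alpha_k^{-1}\gamma_k\alpha_{k+1}$ by its chosen representative in $\Gamma_{\{x_k,x_{k+1}\}}$, and count at most $\lfloor \abs{\gamma}/(\Theta-2\theta)\rfloor+1$ pieces of length at most $\Theta$. Your handling of the strict inequalities and of the $2\Theta$-neighbourhood is more careful than the paper's, and the slack costs nothing, since for $s'$ slightly below $s$ one still has $\lceil L/s'\rceil=\lfloor L/s\rfloor+1\le L/s+1$. The one thing to fix is the degenerate case $x_j=x_{j+1}$: you must use your first option and read the construction as including the pairs $\{x,x\}$, so that short loop classes are represented. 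Dropping the index does not work, because it either discards a possibly non-null-homotopic loop $\eta_j$, or it produces a merged piece of length up to $2\Theta-2\theta>\Theta$ that no family $\Gamma_{\{x,y\}}$ is guaranteed to cover.
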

\begin{proof}
  Parameterise $\gamma$ by arc length.
  Let $n= \lfloor \abs{\gamma}/(\Theta-2\theta)\rfloor$ and for every $0\leq k \leq n$ let $p_k \coloneqq\gamma(k(\Theta -2\theta))$. Finally, let $p_{n+1}\coloneqq y$.
  For every $0\leq k\leq n+1$ let $x_k$ be a point in $X$ nearest to $p_k$ (in particular, $x_0=p_0$ and $x_{n+1}=p_{n+1}$). Further let $\alpha_k$ be a geodesic path from $p_k$ to $x_k$, and note that $\abs{ \alpha_k} \leq \theta$ since $X$ is $\theta$-dense in $\Sigma$.

  Let $\gamma_k$ denote the segment of $\gamma$ between $p_{k}$ and $p_{k+1}$. The curve $\alpha_{k}^{-1}\gamma_k\alpha_{k+1}$ has length at most $\Theta$. 
  By construction, $\alpha_{k}^{-1}\gamma_k\alpha_{k+1}$ is contained in the subsurface $\Sigma_{\{x_k,x_{k+1}\}}$, and is hence homotopic within $\Sigma_{\{x_k,x_{k+1}\}}$ to one of the fixed representatives $\overline\gamma_k\in\Gamma_{\{x_k,x_{k+1}\}}$. In particular, the homotopy between $\alpha_{k}^{-1}\gamma_k\alpha_{k+1}$ and $\overline\gamma_k$
  is entirely contained in the $(\Theta+\theta)$-ball centered at $p_k$.
  By construction, $\bar\gamma_k$ is contained in $\overline G$. 
  
  We may then join the curves $\bar\gamma_k$ to obtain a curve $\bar \gamma$ that is homotopic to $\gamma$. Moreover, we have
  \[
  \abs{\bar \gamma} = \Sigma_{k<n} \abs{\bar \gamma_k} \leq (n+1)\Theta \leq \frac{\Theta}{\Theta-2\theta}\abs{\gamma} + \Theta.\qedhere
  \]
\end{proof}

\begin{proof}[Proof of \cref{thm: triangulation}]
  Make the above construction choosing a $\Theta\leq \Xi/138$ and apply \cref{thm: filling triangulation}.
\end{proof}

\begin{remark}\label{rmk:tame surfaces have bounded degrees}
  If $\Sigma$ is a surface where the surfaces $\Sigma_{\{x,y\}}$ can be chosen to be simply connected (\emph{e.g.}\ because they are convex), and $X$ is a uniform net, then the graph $\overline G$ has bounded degree.
\end{remark}

\section{Achieving $V(G)=X$, and comparison to Maillot's results}

In \cref{thm: triangulation} we started our construction of $G$ by picking a net $X$ of $\Sigma$ which became a subset of $V(G)$, but we also added further vertices to $G$. Maillot \cite{Maillot} used a similar construction in order to prove that virtual surface groups are exactly the groups quasi-isometric to a complete simply-connected Riemannian surface. In \cite{Maillot}, it was important that the vertex set of the 3-gonal decomposition $G$ coincides with a fixed net. The aim of this section is to adapt our above construction to achieve this restriction. We will thereby improve on some of Maillot's results.

\medskip

The \emph{modulus} of a cell decomposition $G\subset \Sigma$ is defined to be the supremum of the length of the $1$-cells. In particular, if $G\hookrightarrow \Sigma$ is a $1$-Lipschitz embedded metric graph with edge lengths bounded by $\Theta$ and defining a cell decomposition, then the decomposition has modulus bounded by $\Theta$.

\begin{thm} \label{thm Maillot}
Let $(\Sigma, d_\Sigma)$ be a complete Riemannian surface possibly with boundary and corners. For each net $X$ in $(\Sigma, d_\Sigma)$ such that $X\cap \partial \Sigma$ is a net in each component of $\partial\Sigma$ with its intrinsic metric, there exists a 3-gonal decomposition $\cd$ of $\Sigma$ that has finite modulus and whose $0$-skeleton is exactly $X$.

Moreover, if $\partial \Sigma$ satisfies \eqref{comparable metric} and $X$ is uniform, then $\cd$ can be chosen so that its $1$-skeleton equipped with the simplicial metric is quasi-isometric to $(\Sigma, d_\Sigma)$.
\end{thm}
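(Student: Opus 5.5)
Starting from the given net $X$, with separation $\theta_0$ and density $\theta_1$, I would apply the construction of \cref{ssec:construction of G}. I choose $\Theta$ large compared to $\theta_1$, say $\Theta>2\theta_1$, and use $X$ itself as the vertex selection in \eqref{X net}; this is allowed because $X$ is $\theta_1$-dense and $X\cap\partial\Sigma$ is dense in each boundary component. This produces the graph $\overline G\supseteq X$. By \cref{thm: filling triangulation}, we obtain a cell decomposition $G'$ all of whose faces have at most three sides and bounded intrinsic diameter, and whose edge lengths are bounded by $c\Theta$. In general $V(G')$ strictly contains $X$, because intersection points of curves in $\Gamma_\Sigma$ were declared vertices. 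The main task is to eliminate these extra vertices. On the boundary nothing needs to be done, since by \cref{rmk: no extra vertices on boundary} we already have $V(\overline G)\cap\partial\Sigma=X\cap\partial\Sigma$.

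To remove the extra vertices I would not subdivide at crossings; instead I would keep the curves of $\Gamma_\Sigma$ whole, in the manner of the pairwise resolution in \cref{lem: resolving two arcs}. Order the short curves joining pairs of $X$ (local finiteness gives a well-ordering that is locally finite). Then greedily select a maximal family $\mathcal A$ of simple curves with the following properties: each curve joins two points of $X$ and has length at most some constant $L$ ($L\approx 30\Theta$); the curves are pairwise disjoint away from $X$; and their interiors avoid $X$ and $\partial\Sigma$. Here \cref{lem: resolve intersections} lets me perturb curves off $X$. I then take $\mathcal D=\partial\Sigma\cup\bigcup\mathcal A$, an embedded graph with vertex set exactly $X$. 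The key claim is that every face of $\mathcal D$ is a disc with boundary size at most $3$ (degenerate faces allowed). I would prove it by contradiction, in the spirit of \cref{prop: triangulate polygons}. First, faces have bounded diameter: the argument of \cref{prop:bounded diameter} uses only property \eqref{homotopy approximation}, and \eqref{homotopy approximation} holds for $\mathcal D$ up to the homotopy classes of paths between nearby net points. Faces are discs by the argument of \cref{cor: cell decomposition}. Finally, a face of boundary size $\ge4$ with bounded intrinsic diameter admits a diagonal of length at most $L$ that can be added, which contradicts maximality. I expect the main obstacle to be establishing \eqref{homotopy approximation} for $\mathcal D$. The reason is that discarded curves crossed selected ones, so a homotopy class of short curves between $x,y\in X$ may not be represented in $\mathcal D$. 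My plan is to replace such a curve by a path in $\mathcal D$ that follows the selected curves it crosses. This requires a Davies-type argument: a short curve crossing a selected curve can be rerouted through an endpoint of that curve at bounded cost. I would try to bound the cost using that each crossing curve has length at most $L$ and, by local finiteness, only boundedly many crossings occur within a ball. Once this is done, \cref{lem: removing 2-gons} would clean up $1$- and $2$-gons without losing vertices, because it only deletes edges. This gives a $3$-gonal decomposition with $0$-skeleton $X$ and modulus at most $L$.

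For the moreover part, assume $X$ is uniform and \eqref{comparable metric} holds. Every edge has length at most $L$, and uniformity bounds the number of points of $X$ in $B(x;L)$. Hence each vertex has boundedly many neighbours, up to parallel edges. The metric graph $\mathcal D$ with its length metric is quasi-isometric to $\Sigma$: it is $1$-Lipschitz, and the lower bound follows from \eqref{homotopy approximation} exactly as in \cref{lem: quasi isometry}, and \cref{lem: removing 2-gons}\eqref{ms i} preserves this. It remains to compare the simplicial metric with the length metric on $X$. Since edge lengths are at most $L$, we have $d_\ell\le L\,d_{\mathrm{simp}}$. Conversely, a $d_\ell$-geodesic path in the graph between vertices can be chopped into pieces of length $\theta_0$. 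Since $X$ is $\theta_0$-separated, consecutive vertices visited within a piece lie in a ball of radius $\theta_0+L$ containing at most $n(\theta_0+L)$ points of $X$. Removing repeated vertices, each such piece therefore contributes boundedly many edges, which gives $d_{\mathrm{simp}}\le C\,d_\ell+C$. Together these show that the simplicial $1$-skeleton is quasi-isometric to $(\Sigma,d_\Sigma)$.
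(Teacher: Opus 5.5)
Your route differs from the paper's, and it has a genuine gap at exactly the step you flag as the main obstacle. Everything you need about $\mathcal D=\partial\Sigma\cup\bigcup\mathcal A$ is derived from \eqref{homotopy approximation} for $\mathcal D$: bounded face diameter via \cref{prop:bounded diameter}, and faces being discs via \cref{cor: cell decomposition}. The proposed rerouting does not deliver \eqref{homotopy approximation}. To replace a short curve $\gamma$ that crosses some $\alpha\in\mathcal A$ by a path in $\mathcal D$ homotopic to it inside a bounded neighbourhood, you must push the sub-arcs of $\gamma$ lying in faces of $\mathcal D$ onto the face boundaries. That is legitimate only if those faces are already known to be discs, so the argument is circular. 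Following $\alpha$ to an endpoint just produces a new short curve from that endpoint with the same problem, and nothing guarantees termination or a length bound. Local finiteness also gives no uniform bound on the number of crossings, and the first part of the theorem does not assume $X$ uniform.

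Proving directly from maximality that faces are discs runs into a length budget. A short essential arc through a non-disc face must be extended along edges of $\mathcal A$, of length up to $L$, to reach points of $X$. The result can exceed $L$, so maximality is not contradicted. The last step has the same problem. \cref{prop:bounded diameter} controls only the $d_\Sigma$-diameter of a face, not its intrinsic diameter, and a face whose sides have length close to $L$ need not admit a diagonal of length at most $L$. This is why \cref{prop: triangulate polygons} works inside one finite polygon with a family of maximal \emph{cardinality*}, and needs the exchange argument around a long side $\alpha_0$. A greedy, inclusion-maximal infinite family does not support that argument. You would also need a non-duplication rule, since otherwise parallel copies of a curve can always be added. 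And if maximality is relative to a fixed countable list, a newly drawn diagonal need not be on it.

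The missing idea is that you do not need a new graph at all. The paper keeps the $3$-gonal decomposition $G\supseteq X$ from \cref{thm: triangulation} and forms the $d_G$-Voronoi cells $G_x$ of $X$, each of diameter at most $\Theta$. It then contracts a geodesic spanning tree $T_x$ of each cell onto $x$, dragging the incident edges along inside a thin neighbourhood $N_x$ via \cref{lem: resolve intersections}. Faces of the resulting graph correspond bijectively to faces of $G$, so being a cell decomposition with faces of boundary size at most three is inherited rather than re-proved. Each new edge is at most an edge of $G$ plus two tree branches, so lengths stay bounded, and \cref{lem: removing 2-gons} finishes.

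For the quasi-isometry, non-loop edges have length between the separation constant of $X$ and a uniform upper bound, so the simplicial and length metrics agree coarsely. The affine upper bound on simplicial distance in terms of $d_\Sigma$ comes from the minor $H$ of $G$ obtained by contracting the cells $G_x$, using uniformity of $X$ to bound how many cells a path of unit length meets. Your comparison of simplicial and length metrics is fine in itself. It only becomes useful once $\mathcal D$ is known to be a quasi-isometrically embedded $3$-gonal decomposition, and that is the part that is missing.
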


If $\partial \Sigma=\emptyset$, the first statement is \cite{Maillot}*{Theorem 4.4} (called the `main technical
result' of that paper). The analogous fact with $\partial\Sigma\neq \emptyset$ is used in \cite{Maillot}*{Theorem 8.1}, where it is remarked that their proof of \cite{Maillot}*{Theorem 4.4} works in this case as well as in the boundary-free case, but details are missing.

The second statement bypasses Proposition 4.5 and Lemma~5.2 of \cite{Maillot} without assuming the additional conditions of planarity at infinity or lower-bounded curvature used there.

\begin{proof}
Suppose that $X$ is a $\theta$-dense net such that $X\cap \partial \Sigma$ is $\theta$-dense in each component with its intrinsic metric. We use this $X$ in the construction of \cref{ssec:construction of G} with some fixed $\Theta> 2\theta$.
Then \cref{thm: triangulation} yields a $3$-gonal decomposition $G\subseteq\Sigma$ of finite modulus with $X\subseteq V(G)$ and such that $(G,d_G)\hookrightarrow (\Sigma,d_\Sigma)$ is a 1-Lipschitz quasi-isometry.

Let $\cv= \{V_x \mid x\in X\}$ be the Voronoi decomposition of $V(G)$ with respect to $X$ in the metric $d_G$, \emph{i.e.}\ $V_x\coloneqq \{v \in V(G) \mid d_G(v,X)=d_G(v,x)\}$. By perturbing $d_G$ slightly we can break all ties, \emph{i.e.}\ ensure that the $V_x$ are pairwise disjoint. 

Let $G_x$ be the subgraph of $G$ induced by the vertices in $V_x$. Note that each $G_x$ is connected. Moreover, we have
\labtequ{Vi diam}{$\diam(G_x)\leq \Theta$ for every $x\in X$,}
because each vertex of $G$ has distance at most $\Theta/2$ from $X$.

For each $x\in X$ we pick a \emph{geodetic spanning tree} \emph{$T_x$} of $V_x$.\footnote{
  This means that for each $y\in V_x$ we have $d_{V_x}(x,y)=d_{T_x}(x,y)$. Such a $T_x$ can be obtained by ordering the vertices of $V_x$ according to their distance from $x$, and recursively joining the next closest vertex to the tree constructed so far.
  }
{Note that $T_x\cap \partial\Sigma$ is either empty or the single point $x$ (if $x\in X\cap \partial \Sigma$). This is because at no point of the construction did we add extra vertices to $\partial \Sigma$ (\cref{rmk: no extra vertices on boundary}). 
}

We can further choose a surface $N_x\subset\Sigma$ contained in an $\epsilon$-neighbourhood of $T_x$ so that $G\cap N_x$ is a tree (this has the effect of adding to $T_x$ a final segment of each edge $e\in G$ which ends in $T_x$ without being contained in it). If $x\in \partial\Sigma$, this tree will also contain a small segment in $\partial\Sigma$, which we do not wish to alter. Let then $T'_x$ be the tree $G\cap N_x$ excluding the edges in $\partial \Sigma$ (if present).
We can now apply \cref{lem: resolve intersections} to the tree $T'_x$  rooted at $x$ in the surface $N_x$, in such a way that $\partial N_x\cap T'_x$ is preserved. For each edge $e$ not contained in $T_x$ nor in $\partial \Sigma$ but having an end-vertex in $T_x$, we then extend $e \smallsetminus N_x$ by the $x$--$\partial N_x$~curve returned by \cref{lem: resolve intersections} ending at $e\cap \partial N_x$. 
Pictorially, this has the effect of `dragging' $T_x$ onto $x$ inside a small enough neighbourhood $N_x$ of $T_x$ in $\Sigma$, pulling all edges incident with $T_x$ as we drag (\cref{fig PT}).

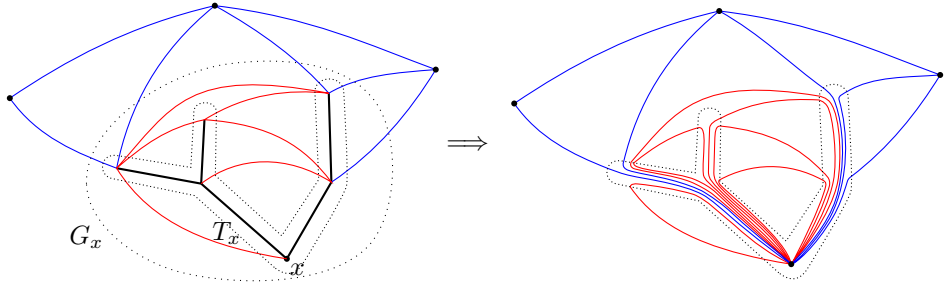
\begin{figure}
    \begin{subfigure}[m]{0.47\textwidth}
      \centering
      \begin{tikzpicture}[y=1cm, x=1cm, yscale=\globalscale*.7,xscale=\globalscale*.7, every node/.append style={scale=1}, inner sep=0pt, outer sep=0pt]
        \path[draw=black,line cap=butt,line join=miter,thick] (3.7597, 18.1762)node[bnode]{}node[xshift=4, yshift=-4]{$x$} -- (4.1829, 18.9045) -- (4.1582, 19.7497)(2.9464, 18.8904) -- (2.9775, 19.498)(3.7597, 18.1762) -- node[pos=0.7,yshift=-10]{$T_x$}(2.9464, 18.8904) -- (2.1437, 19.0372);
        \path[draw=blue,line cap=butt,line join=miter,] (5.1741, 19.9743)node[bnode]{}.. controls (4.8095, 19.9466) and (4.4437, 19.921) .. (4.1582, 19.7497)(3.076, 20.5819)node[bnode]{}.. controls (3.9865, 20.4743) and (4.5422, 20.2071) .. (5.1741, 19.9743).. controls (4.9217, 19.565) and (4.6141, 19.193) .. (4.1829, 18.9045)(2.1437, 19.0372).. controls (1.7672, 19.1824) and (1.3921, 19.3303) .. (1.1293, 19.703)node[bnode]{}.. controls (1.6667, 20.0647) and (2.2599, 20.3921) .. (3.076, 20.5819).. controls (2.5331, 20.1422) and (2.2591, 19.6154) .. (2.1437, 19.0372)(3.076, 20.5819).. controls (3.515, 20.3391) and (3.882, 20.0645) .. (4.1582, 19.7497);
        \path[draw=red,line cap=butt,line join=miter,] (2.1437, 19.0372).. controls (2.473, 18.4981) and (3.0715, 18.2832) .. (3.7597, 18.1762)(2.9775, 19.498).. controls (3.3785, 19.7482) and (3.7746, 19.7886) .. (4.1582, 19.7497).. controls (2.8222, 20.0004) and (2.6736, 19.633) .. (2.1437, 19.0372)(2.1437, 19.0372).. controls (2.3992, 19.2558) and (2.6141, 19.4199) .. (2.9775, 19.498).. controls (3.6148, 19.4555) and (3.9278, 19.2425) .. (4.1829, 18.9045).. controls (3.7642, 19.1693) and (3.2998, 19.1603) .. (2.9464, 18.8904);
        \path[draw=black,line cap=butt,line join=miter,,miter limit=4.0,dotted] (1.8573, 18.8896)node[yshift=-20]{$G_x$}.. controls (1.8408, 19.4559) and (2.6319, 19.9492) .. (3.193, 20.0268).. controls (3.6975, 20.0965) and (4.3339, 20.0589) .. (4.6086, 19.6558).. controls (4.8393, 19.3175) and (4.8138, 18.5869) .. (4.5289, 18.2928).. controls (4.1572, 17.9089) and (3.271, 17.9184) .. (2.7526, 18.0483).. controls (2.3546, 18.1481) and (1.8692, 18.4794) .. (1.8573, 18.8896) -- cycle;
        \path[draw=black,line cap=butt,line join=miter,densely dotted,miter limit=4.0] (3.6412, 18.0943) -- (2.8843, 18.7768) -- (2.1081, 18.9373).. controls (1.9635, 18.9616) and (1.9811, 19.1757) .. (2.1404, 19.1586) -- (2.848, 19.0446) -- (2.8771, 19.5616).. controls (2.8674, 19.6911) and (3.0892, 19.6908) .. (3.0911, 19.5687) -- (3.0811, 18.968) -- (3.7113, 18.3954) -- (4.0627, 18.9293) -- (4.0266, 19.7566).. controls (4.0201, 19.9149) and (4.2696, 19.9317) .. (4.299, 19.7796).. controls (4.299, 19.7796) and (4.3487, 18.9844) .. (4.3272, 18.8729).. controls (4.3055, 18.76) and (3.9084, 18.1254) .. (3.9084, 18.1254).. controls (3.8343, 18.0108) and (3.7409, 18.0143) .. (3.6412, 18.0943) -- cycle;
      \end{tikzpicture}    
    \end{subfigure}%
    \begin{subfigure}[m]{0.06\textwidth}
      $\Longrightarrow$
    \end{subfigure}%
    \begin{subfigure}[m]{0.47\textwidth}
      \centering
      \begin{tikzpicture}[y=1cm, x=1cm, yscale=\globalscale*.7,xscale=\globalscale*.7, every node/.append style={scale=1}, inner sep=0pt, outer sep=0pt]
        \path[draw=red,line cap=butt,line join=miter,,miter limit=4.0] (8.5345, 18.2863).. controls (8.5859, 18.3694) and (8.8216, 18.8425) .. (8.8617, 19.028).. controls (8.8709, 19.0704) and (8.8449, 19.0783) .. (8.8324, 19.0852).. controls (8.5082, 19.2627) and (8.1514, 19.2482) .. (7.8571, 19.0887).. controls (7.8425, 19.0839) and (7.8406, 19.0617) .. (7.8626, 19.0376).. controls (7.9429, 18.9497) and (8.4948, 18.5158) .. (8.5345, 18.2863)(8.5345, 18.2863).. controls (8.4451, 18.3002) and (8.3572, 18.316) .. (8.2713, 18.334).. controls (7.7518, 18.4429) and (7.2812, 18.6204) .. (7.0085, 19.0165).. controls (7.0039, 19.0232) and (7.0035, 19.0562) .. (7.08, 19.0484).. controls (7.2455, 19.0317) and (7.4431, 18.998) .. (7.6037, 18.9505).. controls (7.7283, 18.9057) and (7.8684, 18.7576) .. (7.9971, 18.6556).. controls (8.1732, 18.4947) and (8.3462, 18.3701) .. (8.5345, 18.2863)(8.5345, 18.2863).. controls (8.61, 18.3624) and (8.9139, 18.7571) .. (8.9752, 19.0654).. controls (9.0108, 19.2442) and (9.0096, 19.6806) .. (8.9061, 19.8214).. controls (8.8686, 19.8723) and (8.8051, 19.8827) .. (8.7984, 19.8838).. controls (7.6659, 20.0716) and (7.4552, 19.7677) .. (7.0152, 19.2579).. controls (7.0043, 19.2349) and (7.0037, 19.2033) .. (7.0513, 19.1908).. controls (7.2275, 19.1465) and (7.4187, 19.1315) .. (7.5798, 19.068).. controls (7.6374, 19.0452) and (7.7463, 18.9866) .. (7.7968, 18.9446).. controls (8.0587, 18.7264) and (8.2802, 18.5164) .. (8.5345, 18.2863)(8.5345, 18.2863).. controls (8.3074, 18.5222) and (8.062, 18.7362) .. (7.8257, 18.9612).. controls (7.7693, 19.0148) and (7.7012, 19.0543) .. (7.6242, 19.0883).. controls (7.4478, 19.1663) and (6.9975, 19.1768) .. (7.0424, 19.2508).. controls (7.2251, 19.3985) and (7.4027, 19.5135) .. (7.6514, 19.5833).. controls (7.7505, 19.6065) and (7.6839, 19.2891) .. (7.7001, 19.142).. controls (7.7008, 19.0918) and (7.7848, 19.0292) .. (7.8319, 18.9842).. controls (8.0661, 18.7516) and (8.3275, 18.5497) .. (8.5345, 18.2863)(8.5345, 18.2863).. controls (8.5998, 18.3669) and (8.8625, 18.715) .. (8.9345, 19.0761).. controls (8.9747, 19.2779) and (8.9355, 19.6974) .. (8.875, 19.7903).. controls (8.8447, 19.8367) and (8.7948, 19.847) .. (8.7948, 19.847).. controls (8.4901, 19.862) and (8.1844, 19.8414) .. (7.8689, 19.6752).. controls (7.813, 19.6531) and (7.7592, 19.6205) .. (7.7594, 19.5755).. controls (7.7536, 19.4471) and (7.7513, 19.3041) .. (7.7484, 19.1655).. controls (7.7473, 19.0899) and (7.8008, 19.0487) .. (7.8411, 19.0047).. controls (8.0723, 18.7652) and (8.3417, 18.5984) .. (8.5345, 18.2863)(8.5345, 18.2863).. controls (8.7332, 18.5679) and (8.8744, 18.9429) .. (8.8956, 19.0251).. controls (8.9161, 19.1043) and (8.8525, 19.1428) .. (8.8371, 19.1594).. controls (8.6178, 19.3947) and (8.3378, 19.5484) .. (7.8672, 19.5983).. controls (7.8127, 19.6068) and (7.811, 19.5328) .. (7.8085, 19.4647) -- (7.7994, 19.1544).. controls (7.8015, 19.0912) and (7.8238, 19.0611) .. (7.8525, 19.0249).. controls (8.0798, 18.7787) and (8.3822, 18.604) .. (8.5345, 18.2863);    
        \path[draw=blue,line cap=butt,line join=miter,,miter limit=4.0] (9.949, 20.0844)node[bnode]{}.. controls (9.6401, 20.061) and (9.3115, 20.0759) .. (9.0693, 19.9288).. controls (9.0403, 19.9111) and (8.9763, 19.8901) .. (8.9885, 19.8487).. controls (9.0774, 19.5452) and (9.0874, 19.0944) .. (8.9094, 18.7432).. controls (8.8353, 18.5968) and (8.607, 18.3112) .. (8.5345, 18.2863)(7.8509, 20.692)node[bnode]{}.. controls (8.7614, 20.5844) and (9.3171, 20.3173) .. (9.949, 20.0844).. controls (9.7263, 19.7234) and (9.4607, 19.3914) .. (9.1052, 19.1201).. controls (9.0991, 19.1155) and (9.0767, 19.1033) .. (9.0676, 19.0732).. controls (9.0284, 18.9422) and (8.9877, 18.8321) .. (8.9306, 18.7341).. controls (8.7658, 18.451) and (8.5656, 18.2811) .. (8.5345, 18.2863)node[bnode]{}(8.5345, 18.2863).. controls (8.5151, 18.2938) and (8.1753, 18.5466) .. (7.9539, 18.7415).. controls (7.8932, 18.7949) and (7.7662, 18.9086) .. (7.6939, 18.9466).. controls (7.5905, 19.0009) and (7.4868, 19.0264) .. (7.3923, 19.05).. controls (7.2341, 19.0895) and (7.0914, 19.1069) .. (6.9844, 19.1327).. controls (6.9076, 19.1512) and (6.7939, 19.1959) .. (6.7875, 19.1985).. controls (6.4564, 19.33) and (6.1364, 19.4838) .. (5.9041, 19.8132)node[bnode]{}.. controls (6.4416, 20.1749) and (7.0347, 20.5022) .. (7.8509, 20.692).. controls (7.3455, 20.2828) and (7.0731, 19.798) .. (6.9448, 19.2666).. controls (6.9432, 19.2598) and (6.9151, 19.2034) .. (6.989, 19.1778).. controls (7.0635, 19.152) and (7.222, 19.1311) .. (7.4026, 19.0838).. controls (7.5, 19.0583) and (7.6484, 19.008) .. (7.7088, 18.9695).. controls (7.8164, 18.9011) and (7.8817, 18.8559) .. (7.9757, 18.7683).. controls (8.2357, 18.5258) and (8.5377, 18.3019) .. (8.5345, 18.2863)(7.8509, 20.692).. controls (8.2436, 20.4749) and (8.5324, 20.1784) .. (8.8427, 19.9581).. controls (8.8496, 19.9532) and (8.9136, 19.9075) .. (8.9384, 19.8521).. controls (8.9922, 19.7316) and (9.0701, 19.3925) .. (8.9811, 18.987).. controls (8.9308, 18.7578) and (8.7726, 18.5262) .. (8.5345, 18.2863);
        \path[draw=black,line cap=butt,line join=miter,,miter limit=4.0,densely dotted] (8.4161, 18.2045) -- (7.6591, 18.887) -- (6.8829, 19.0474).. controls (6.7383, 19.0717) and (6.756, 19.2859) .. (6.9152, 19.2688) -- (7.6229, 19.1548) -- (7.6519, 19.6717).. controls (7.6423, 19.8013) and (7.864, 19.8009) .. (7.866, 19.6788) -- (7.856, 19.0782) -- (8.4862, 18.5056) -- (8.8375, 19.0395) -- (8.8014, 19.8667).. controls (8.795, 20.025) and (9.0444, 20.0418) .. (9.0738, 19.8898).. controls (9.0738, 19.8898) and (9.1236, 19.0945) .. (9.1021, 18.983).. controls (9.0803, 18.8701) and (8.6833, 18.2356) .. (8.6833, 18.2356).. controls (8.6091, 18.1209) and (8.5158, 18.1244) .. (8.4161, 18.2045) -- cycle;
      \end{tikzpicture}    
    \end{subfigure}
   \caption{\small Contracting the spanning tree $T_x$ to the point $x$. The subgraph $G_x\subset G$ is the union of $T_x$ with the red edges in the left hand side.}
   \label{fig PT}
\end{figure} 

By applying this procedure to each $x\in X$, we modify $G$ into a graph $\widetilde G$ embedded into $\Sigma$ with $V(\widetilde G)=X$. Note that there is an one-to-one correspondence between the faces of $G$ and the faces of $\widetilde G$. 
The latter generally fails to be a 3-gonal decomposition, because contracting the edges of the $T_x$ will decrease the boundary size of some of the faces.
However, we may apply \cref{lem: removing 2-gons} to obtain a 3-gonal decomposition $\cd$ of $\Sigma$. Observe that
\labtequ{upper bounded}{
the lengths of the edges in $\widetilde G$ are uniformly bounded above.
}
In fact, by construction every edge in $\widetilde G$ has length within $\epsilon$ from the length of a path consisting of the concatenation an edge in $G$ and two branches in some spanning trees $T_x$ (\cref{lem: resolve intersections}), and the latter is bounded because of \eqref{Vi diam}.
This completes the proof of the first statement. (Note that at this point it is not yet clear whether the embedding $\widetilde G\hookrightarrow\Sigma$ is a quasi-isometry.)

\medskip

For the second statement, let $\widetilde G^{(1)}$ denote the $1$-skeleton with its simplicial metric, \emph{i.e.}\ the graph $\widetilde{G}$ where every edge is given length one. Observe that the lengths of the non-loop edges in $\widetilde G$ are uniformly bounded below because they join distinct points in $X$, which is a net. Since the length of all the edges is uniformly bounded above by \eqref{upper bounded}, we see that 
\labtequ{QI to 1-skeleton}{$\widetilde G^{(1)}$ and $\widetilde G\subset\Sigma$ are quasi-isometric via the identity map.}
{Note that if we prove that $\widetilde{G}^{(1)}\hookrightarrow \Sigma$ is a quasi-isometry then also the 3-gonal decomposition $\cd$ obtained by applying \cref{lem: removing 2-gons} is quasi-isometrically embedded.}

By construction, the embedding $\widetilde{G}\hookrightarrow \Sigma$ is a $1$-Lipschitz map which is coarsely surjective. Together with \eqref{QI to 1-skeleton}, it is thus enough to show that if the net $X$ is uniform, then there are constants $M,A$ such that for every $x,y\in X$ we have a bound $d_{\widetilde G^{(1)}}(x,y) \leq M d_\Sigma(x,y)+A$.

Observe that contracting each $G_x$ into a vertex we obtain a minor $H$ of $G$, which we equip with its simplicial metric.
Note that, as abstract graphs, $H$ is a subgraph of $\widetilde G^{(1)}$ obtained by removing loops and some parallel edges. In particular, the identity map on $X$ defines a quasi-isometry between $H$ and $ \widetilde G^{(1)}$.

Given $x,y \in X$, let $\gamma$ be a $x$--$y$~geodesic in $\Sigma$, and $\bar\gamma$ a $x$--$y$ geodesic in $G$. \cref{lem:close homotopy}, gives a uniform affine upper bound on $\abs{\bar\gamma}$ in terms of $d_\Sigma(x,y)$. Moreover, $\bar\gamma$ induces a $x$--$y$~path $p$ in $H$, and we have $|p|\leq C |\bar\gamma|$ for some constant $C$, because each unit of length of $\bar\gamma$ meets a uniformly bounded number of cells $V_i$ by the uniformity of $X$. Thus $d_H(x,y) \leq |p|\leq C |\bar\gamma|$ has a uniform affine upper bound in terms of $d_\Sigma(x,y)$. The claim follows.
\end{proof}

The \emph{essential degree} of a vertex $x$ of a graph $G$ is the number of $y\in V(G)$ such that $G$ contains an $x$--$y$~edge (this is smaller than the degree of $x$ if $G$ has several $x$--$y$~edges). The following refines \Cref{thm: uni net intro}.
\begin{corollary} \label{cor uni net}
The following statements are equivalent for a complete Riemannian surface $(\Sigma, d_\Sigma)$:
\begin{enumerate}
 \item \label{c i} $\Sigma$ is quasi-isometric to a graph of bounded degree;
  \item \label{c ii} $\Sigma$ has a uniform net; 
   \item \label{c iii} $\Sigma$ has a 3-gonal decomposition of bounded essential degree whose $1$-skeleton with the simplicial metric is quasi-isometric to $\Sigma$. 
\end{enumerate}
\end{corollary}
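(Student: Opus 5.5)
We will prove the cycle of implications \eqref{c i}$\Rightarrow$\eqref{c ii}$\Rightarrow$\eqref{c iii}$\Rightarrow$\eqref{c i}.

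For \eqref{c i}$\Leftrightarrow$\eqref{c ii} we quote the well-known equivalence of \cite{KanaiRough2} recalled after \cref{thm: uni net intro}. For completeness, here is the direction we need. Suppose $f\colon \Sigma\to G$ is an $(M,A)$-quasi-isometry to a graph of degree at most $D$. Take a maximal $\theta$-separated subset $Y\subset\Sigma$ with $\theta>MA$. Points of $Y$ with the same image under $f$ are then at distance at most $MA<\theta$, so $f|_Y$ is injective. Moreover, $f(Y\cap B(x;R))$ lies in a ball of radius $MR+A$ in $G$, which has at most $D^{MR+A+1}$ vertices. Hence $Y$ is a uniform net.

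For \eqref{c ii}$\Rightarrow$\eqref{c iii} we apply the second statement of \cref{thm Maillot}. The hypothesis \eqref{comparable metric} is vacuous here, since the corollary is stated for surfaces without boundary. Given a uniform net $X$, we obtain a $3$-gonal decomposition $\cd$ with $V(\cd)=X$, finite modulus $L$, and simplicial $1$-skeleton quasi-isometric to $\Sigma$. It remains to bound the essential degree. If $x,y$ are joined by an edge, that edge is a curve of length at most $L$, so $d_\Sigma(x,y)\le L$ and $y\in X\cap B(x;L+1)$. By uniformity of $X$ there are at most $n(L+1)$ such $y$, which gives the bound on the essential degree.

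For \eqref{c iii}$\Rightarrow$\eqref{c i}, let $\cd$ be such a decomposition and let $H$ be the simple graph obtained from its $1$-skeleton by deleting loops and identifying parallel edges. Deleting loops and merging parallel edges does not change simplicial distances between vertices, so the $1$-skeleton is isometric to $H$ on vertices and hence quasi-isometric to it. The degree of $H$ equals the essential degree of $\cd$, which is bounded. Composing with the given quasi-isometry to $\Sigma$ shows that $\Sigma$ is quasi-isometric to a bounded degree graph.

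The only step needing care is the bounded essential degree in \eqref{c ii}$\Rightarrow$\eqref{c iii}. The finite-modulus part of \cref{thm Maillot}, combined with uniformity of $X$, handles it, so I expect no real obstacle beyond checking that the modulus bound applies to edges of the final decomposition $\cd$. It does, because $\cd$ is a subgraph of $\widetilde G$, whose edge lengths are bounded by \eqref{upper bounded}.
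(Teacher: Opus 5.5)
Your argument is correct and is essentially the paper's proof: \eqref{c i}$\Leftrightarrow$\eqref{c ii} via Kanai, where your sketch of the needed direction is fine once $f$ is rounded to vertices of the graph; \eqref{c ii}$\Rightarrow$\eqref{c iii} via \cref{thm Maillot}, with the essential degree bounded by the bounded edge lengths together with uniformity of $X$; and \eqref{c iii}$\Rightarrow$\eqref{c i} by deleting loops and parallel edges. One caveat: the paper's standing convention allows $\partial\Sigma\neq\emptyset$, so calling \eqref{comparable metric} vacuous really means restricting to boundaryless $\Sigma$ (otherwise \cref{thm Maillot} also needs \eqref{comparable metric} and a net that restricts to a net on each boundary component), and the paper's own proof makes this same restriction tacitly.
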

\begin{proof}
The equivalence of \eqref{c i} and \eqref{c ii} is well-known, and holds in the greater generality where $\Sigma$ is a geodesic metric space \cite{KanaiRough2}. It is easy to deduce \eqref{c i} from \eqref{c iii} by removing loops and all but one $x$--$y$~edges for each pair of adjacent vertices $x,y$. The implication \eqref{c ii} $\implies$ \eqref{c iii} is established by Theorem~\ref{thm Maillot}: the fact that \cd\ is of bounded essential degree follows from the fact that all neighbours of $x\in X=V(\cd)$ in the 1-skeleton of \cd\ are within bounded distance from $x$, and so their cardinality is bounded by the fact that $X$ is uniform.
\end{proof}

Using Remark~\ref{rmk:barycentric subdivision of 3-gonal}, we can modify the 3-gonal decomposition of \cref{cor uni net}\eqref{c iii} into a triangulation by subdividing edges and adding vertices as needed. This suffices to complete the proof of \Cref{thm: uni net intro}. (Note however that the essential degree may become unbounded in this process.)

\section{Concluding remarks and further questions}

\subsection{$(1,A)$-quasi-isometric triangulations}

In this section we record the proof of \cref{cor M=1 intro}, which was communicated to us by James Davies. The precise statement we prove is the following.
\begin{prop}
  Let $(\Sigma,d_\Sigma)$ be a complete Riemannian surface without boundary. Then there is a $3$-gonal decomposition $G\subset \Sigma$ as in \cref{thm: triangulation} and a constant $A\geq 0$, such that the embedding $(G, d_\ell^G)\hookrightarrow(\Sigma,d_\Sigma)$ is a $(1,A)$-quasi-isometry.
\end{prop}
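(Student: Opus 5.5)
The plan is to run the construction of \cref{ssec:construction of G} with one extra ingredient: for every pair $x,y\in X$ at distance less than some fixed $R$ (a large constant to be chosen, e.g.\ $R=10\Theta$), we also draw an $\epsilon_{x,y}$-geodesic from $x$ to $y$ into $\overline G$. This is in addition to the homotopy representatives of length less than $\Theta$. The errors $\epsilon_{x,y}$ are chosen summable and small, e.g.\ bounded by a fixed $\epsilon$. These long curves are perturbed via \cref{lem: resolve intersections} so that they meet previously drawn curves in finitely many points. As before, all intersection points become vertices.

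First we check that the hypotheses of \cref{thm: filling triangulation} still hold. Subdividing the long curves at intersection points does not bound edge lengths by $\Theta$. So we also subdivide each added $\epsilon$-geodesic by extra vertices so that its edges have length at most $\Theta$; adding degree-2 vertices is harmless. Property \eqref{homotopy approximation} only improves when edges are added, so \cref{lem:close homotopy} still applies. The family of added curves remains locally finite because $X$ is locally finite. Hence \cref{thm: filling triangulation} yields a $3$-gonal decomposition $G\supseteq \overline G'$. The only worry is that \cref{lem: removing 2-gons} deletes edges. With $\partial\Sigma=\emptyset$, it removes only loops and the longer of two parallel edges, so it never increases $d_\ell^G$. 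In particular the near-geodesic paths survive up to replacement by shorter parallel ones.

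The key estimate is the $(1,A)$ lower bound $d_\ell^G(x,y)\le d_\Sigma(x,y)+A$ for $x,y\in X$; the other inequality is trivial since the embedding is $1$-Lipschitz. Given $x,y\in X$, take a $\Sigma$-geodesic $\gamma$ and points $p_0=x,p_1,\dots,p_k=y$ along it at spacing $R-2\theta$. Pick $x_i\in X$ with $d(p_i,x_i)\le\theta$. Then $d_\Sigma(x_i,x_{i+1})\le R$, so $d_G(x_i,x_{i+1})\le d_\Sigma(x_i,x_{i+1})+\epsilon\le R-2\theta+2\theta+\epsilon$. Summing gives
\[
d_G(x,y)\le d_\Sigma(x,y)+k(2\theta+\epsilon)+R .
\]
This still has multiplicative error $(2\theta+\epsilon)/(R-2\theta)$, and it is the main obstacle: fixed-scale shortcuts cannot give multiplicative constant exactly $1$.

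To fix this, I would add $\epsilon$-geodesics at all scales but sparsely. For each $j\ge0$, choose a $2^j$-net $X_j\subseteq X$ with $X_0=X$. Then join pairs in $X_j$ at distance at most $2^{j+2}$ by $\epsilon_j$-geodesics, with $\epsilon_j=2^{-j}$. A path from $x$ to $y$ then goes up the hierarchy to the level $j\approx\log_2 d(x,y)$, travels along one long near-geodesic, and descends. The total additive error is then $\sum_{i\le j}(2\cdot 2^{i}+\epsilon_i)$ on each side, which is unfortunately of order $2^j\approx d(x,y)$. The remedy I would try is to connect $x$ directly to points of $X_j$ lying near $\gamma$. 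That is, add $\epsilon$-geodesics from every $x\in X$ to points of $X_j$ within distance $2^{j+2}$ for all $j$. Then a single concatenation $x\to z\to y$ with $z\in X_j$ near the midpoint has error $O(2^j\cdot$ small$)$ only if $z$ lies close to $\gamma$.

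Local finiteness is the crucial check here. Each point sees only curves from levels $j$ with nearby endpoints, but long curves pass through everywhere. Ensuring local finiteness seems to require choosing the geodesic curves at level $j$ to be locally finite in $j$, which fails in general. This is exactly where I expect the real difficulty, and I would look for Davies' trick of allowing short edges, via the Remark following \cref{cor M=1 intro}, to achieve it.
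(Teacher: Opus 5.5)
There is a genuine gap: neither of your constructions produces a $(1,A)$-quasi-isometry, and the proposal ends without a working mechanism. The fixed-scale version only gives multiplicative constant $1+(2\theta+\epsilon)/(R-2\theta)$. The multi-scale version fails on two counts. First, the family of curves cannot be locally finite: in $\mathbb R^2$, $2^{-j}$-geodesics of every scale $2^j$ must cross a fixed ball and cannot detour around it cheaply. Second, as you computed yourself, the additive error is still of order $d_\Sigma(x,y)$.

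More importantly, the obstacle is misdiagnosed. Your claim that fixed-scale shortcuts cannot give multiplicative constant $1$ holds only for a uniformly dense net. The linear error does not come from the shortcuts having bounded length. It comes from paying the snapping error $d_\Sigma(p_k,x_k)\le\theta$ once per step, with a \emph{uniform} bound $\theta$. Long curves are not needed at all.

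The missing idea is to make $X$ non-uniformly dense, so that snapping errors are summable along every geodesic. Cover $\Sigma$ by a locally finite countable family $\{U_n\}$ of sets of diameter less than $\Theta/2$. Let $X=\bigcup_n X_n$, where $X_n$ is a finite $\theta 2^{-n}$-dense subset of $U_n$ and $\theta<\Theta/4$. This $X$ is still locally finite and $\theta$-dense, so \cref{ssec:construction of G} and \cref{thm: filling triangulation} apply unchanged.

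Now take $x,y\in X$ and a geodesic $\gamma$ joining them. Sample $p_k=\gamma(k\Theta/2)$ for $0\le k\le N=\lfloor\abs{\gamma}/(\Theta/2)\rfloor$, set $p_{N+1}=y$, and let $x_k\in X$ be nearest to $p_k$. If $p_k\in U_{n_k}$, then $d_\Sigma(p_k,x_k)\le\theta 2^{-n_k}$. Because $\gamma$ is a geodesic, distinct sample points are at distance at least $\Theta/2$, so no $U_n$ contains two of them. (The endpoints $x,y$ snap to themselves with zero error.) Hence the indices $n_k$ are distinct, and the total snapping error is at most $2\theta\sum_n 2^{-n}$.

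Consecutive $x_k$ are at distance less than $\Theta/2+2\theta<\Theta$. The geodesic between them lies in $\Sigma_{\{x_k,x_{k+1}\}}$, so $\overline G$ already contains a curve joining them of length at most $d_\Sigma(x_k,x_{k+1})$ plus a small error. This requires that, in \cref{ssec:construction of G}, each representative is a perturbation of a \emph{shortest} one, with perturbation errors summable over the enumeration of pairs. The pairs $\{x_k,x_{k+1}\}$ are pairwise distinct, so these errors sum to a constant. Summing over $k$ gives $d^{\overline G}_\ell(x,y)\le d_\Sigma(x,y)+A$.

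Your observation that \cref{lem: removing 2-gons} only deletes loops and longer parallel edges when $\partial\Sigma=\emptyset$ then transfers this bound to $G$. The price is exactly the one in the Remark following \cref{cor M=1 intro}: $X$ is not uniformly separated, so $G$ has arbitrarily short edges.
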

\begin{proof}
  The idea is to choose a sufficiently dense locally finite set $X$ in the construction of \cref{ssec:construction of G}.
  Arbitrarily pick sufficiently small $0<\theta<\Theta/4$ (\emph{e.g.}\ $\Theta = \Xi/138$ and $\theta=\Xi/600$, where $\Xi$ is the desired size of triangles in \cref{thm: triangulation}), and cover $\Sigma$ with a locally finite countable family of subsets $U_n$ of diameter less than $\Theta/2$.
  In each $U_n$ we choose a finite set of points $X_n$ that is $\theta/2^n$-dense. Let $X\coloneqq \bigcup_{n\in\mathbb N}X_n$. This is a locally finite, $\theta$-dense subset of $\Sigma$, so it can be used in the construction of the embedded graph $\overline{G}$ as in \cref{ssec:construction of G}.

  The key point now is that the set $X$ is sufficiently dense to improve the strategy of proof of \cref{lem:close homotopy} to obtain multiplicative constant one. Namely, let $\gamma$ be a geodesic connecting two points $x,y\in X$, parameterised by arc length. Let $n= \lfloor \abs{\gamma}/(\Theta/2)\rfloor$, and for every $0\leq k \leq n$,  let $p_k \coloneqq\gamma(k\Theta/2)$, and let $p_{n+1}\coloneqq y$.

  For every $0\leq k\leq n+1$, let $x_k$ be a point in $X$ nearest to $p_k$. In particular, if $p_k\in U_{n_k}$, then $d_\Sigma(x_k,p_k)\leq 2^{-n_k}$, hence 
  \[
  d_\ell^{\overline{G}}(x_k,x_{k+1})=d_\Sigma(x_k,x_{k+1}) \leq 2^{-n_k} + d_\Sigma(p_k,p_{k+1}) + 2^{-n_{k+1}}
  \]
  Crucially, the points $p_k$ all belong to different sets $U_k$, because the latter have diameter less than $\Theta/2$ by assumption. By the triangle inequality, we obtain
  \[
  d_\ell^{\overline{G}}(x,y)
  \leq\sum_{k=1}^n d_\ell^{\overline{G}}(x_k,x_{k+1})
  \leq d_\Sigma(x,y) + 2\sum_{n\in\mathbb N} 2^{-n}.
  \]
  
  This completes the proof, because the other steps in the proof of \cref{thm: triangulation} do not change the multiplicative constant of the embedding $G\hookrightarrow \Sigma$; indeed, $G$ is obtained by first adding edges to $\overline{G}$ and then removing redundancies with \cref{lem: removing 2-gons}. (This last step could be problematic if $\partial\Sigma\neq \emptyset$, but since we are assuming that this is not the case, the proof of \cref{lem: removing 2-gons} shows that the multiplicative constant does not worsen.)
\end{proof}

\subsection{Quasi-isometric planar graphs that are not bi-Lipschitz equivalent} \label{sec bLe}

In this section we will combine Theorem~\ref{thm Maillot} with a construction of Burago \& Kleiner \cite{BuKlSep} in order to prove \Cref{cor AD intro}, which we restate for convenience:

\begin{corollary} \label{cor AD}
There are plane graphs $H_1,H_2$, with bounded degrees and face-boundary sizes, which are quasi-isometric to each other but not bi-Lipschitz equivalent.
\end{corollary}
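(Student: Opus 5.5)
The plan is to use the Burago--Kleiner separated net in $\mathbb R^2$ that is not bi-Lipschitz equivalent to $\mathbb Z^2$, and to turn both it and $\mathbb Z^2$ into plane graphs via \cref{thm Maillot}. Let $Y\subset\mathbb R^2$ be such a net (from \cite{BuKlSep}). It is separated and dense; since the Lebesgue measure of balls in $\mathbb R^2$ is uniform, it is also a uniform net. The same holds for $\mathbb Z^2$. Apply \cref{thm Maillot} to $\Sigma=\mathbb R^2$ with the Euclidean metric. There is no boundary, so \eqref{comparable metric} is vacuous. Taking $X=Y$ gives a $3$-gonal decomposition $\cd_1$ with $V(\cd_1)=Y$ whose simplicial $1$-skeleton is quasi-isometric to $\mathbb R^2$. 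For $\mathbb Z^2$ we simply take the standard square grid with diagonals added, or apply the theorem again, obtaining $\cd_2$ with $V(\cd_2)=\mathbb Z^2$. Let $H_i$ be the $1$-skeleton of $\cd_i$ with loops removed. Each $H_i$ is a plane graph. Its faces are $3$-gons, or $\le 3$-gons after loop deletion, so face-boundary sizes are bounded. By \cref{cor uni net} its essential degree is bounded, and we can delete parallel edges while keeping it plane with bounded face sizes; this is the step where we accept parallel edges if needed, as the intro statement allows. Since both $H_i$ are quasi-isometric to $\mathbb R^2$, they are quasi-isometric to each other.

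Now suppose $f\colon V(H_1)\to V(H_2)$ is a bi-Lipschitz bijection for the simplicial metrics. The key observation, already established in the proof of \cref{thm Maillot}, is that for $x,y\in X$ the simplicial distance $d_{H}(x,y)$ is comparable to $d_\Sigma(x,y)$. The upper bound is affine, $d_H\le M d_\Sigma+A$. The lower bound holds because edges have bounded Euclidean length by \eqref{upper bounded}, so $d_\Sigma\le L\, d_H$. Because $X$ is separated, distinct points satisfy $d_\Sigma(x,y)\ge\theta>0$, and so the additive constant can be absorbed: $d_H(x,y)\le (M+A/\theta)\,d_\Sigma(x,y)$ for $x\neq y$. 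Hence the identity $(Y,d_{\mathbb R^2})\to (V(H_1),d_{H_1})$ is bi-Lipschitz, and likewise for $\mathbb Z^2$ and $H_2$. Composing, $f$ induces a bi-Lipschitz bijection $Y\to\mathbb Z^2$ with respect to Euclidean metrics. This contradicts Burago--Kleiner.

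The only delicate points are the following. First, we must make sure that bi-Lipschitz equivalence of graphs means a bi-Lipschitz bijection of vertex sets; if a more general notion is intended, we would need a map between the metric graphs, and restricting to vertices up to bounded perturbation would lose bijectivity. I would handle this by fixing the definition as a bijection on vertices. Second, we must verify that $Y$ is uniform, which follows from separation plus the doubling property of the plane. The main obstacle I expect is the absorption of the additive constant, which crucially uses that simplicial distances between distinct vertices are at least $1$ and Euclidean distances are at least $\theta$. This step is routine once it is written down.
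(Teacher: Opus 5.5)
Your core argument is the paper's: the Burago--Kleiner net versus $\Z^2$, and \cref{thm Maillot} to realise both as vertex sets of $3$-gonal decompositions whose simplicial $1$-skeletons are quasi-isometric to $\R^2$ via the identity on vertices. Then you use the fact that a bijective quasi-isometry between separated nets is bi-Lipschitz. Your explicit absorption of the additive constant, using $d_\Sigma\geq\theta$ and $d_H\geq 1$ for distinct vertices, is exactly what the paper's ``since this is a bijection'' leaves implicit.

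What does not hold up is the verification of bounded degrees and face sizes. \cref{cor uni net} only bounds the \emph{essential} degree, so it says nothing about how many loops or parallel edges sit at a vertex. Your ways around this fail as written:
\begin{enumerate}
\item Deleting a loop of $\cd_i$ merges the two $3$-gons on its two sides into a $4$-gon, so the faces are not ${\leq}3$-gons afterwards.
\item Deleting all but one edge of each parallel class merges faces in a way you never control, so bounded face size is unjustified.
\item If you keep the parallel edges, bounded essential degree does not give bounded degree.
\end{enumerate}

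The fix is to take, as the paper does, $H_i$ to be the full $1$-skeleton of $\cd_i$. Loops and parallel edges are allowed by the paper's notion of graph and do not affect simplicial distances, and every face is then a $3$-gon. It remains to bound edge multiplicities using uniformity of $X_i$. The paper appeals to \cref{rmk:tame surfaces have bounded degrees} for this; a direct planar argument also works. Suppose $e,e'$ are $x$--$y$ edges of $\cd_i$ bounding a disc $R$ that contains no other $x$--$y$ edge. Then $R$ contains a vertex: otherwise every edge inside $R$ is a loop at $x$ or $y$, an innermost such loop would bound a $1$-gon, and with no loops $R$ would itself be a $2$-gon. These discs are pairwise disjoint. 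Since edges have Euclidean length at most $L$ by \eqref{upper bounded}, they lie in the closed $L$-ball around $x$, so uniformity of $X_i$ bounds the number of parallel $x$--$y$ edges. The loops at $x$ can be counted in the same spirit, since a vertex-free disc cut out by them must be a face bounded by exactly three of them. With this in place the rest of your proof goes through.
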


\begin{proof}
Burago \& Kleiner \cite{BuKlSep} constructed a net $X_1$ in $\R^2$ which is not bi-Lipschitz equivalent to the `integer' net $X_2\coloneqq\Z^2$. It is not hard to see that every net in $\R^2$ is uniform by a volume argument. Thus, we can apply Theorem~\ref{thm Maillot} to obtain 3-gonal decompositions $\cd_1,\cd_2$ of $\R^2$ with vertex sets $X_1$, $X_2$ and 1-skeletons $H_1,H_2$ quasi-isometric to $\R^2$ with their simplicial graph metric. (For $H_2$ we could also just use the standard Cayley graph of $\Z^2$.) Thus, $H_1,H_2$ are quasi-isometric to each other. Easily, they have face-boundary sizes at most 3, and bounded degrees since $X_i$ is uniform (\cref{rmk:tame surfaces have bounded degrees}).

By \eqref{QI to 1-skeleton}, the quasi-isometry to $\R^2$ is defined by the identity $(X_i,d_{H_i}) \to (X_i,d_{\R^2})$. Since this is a bijection, we deduce that $(X_i,d_{H_i})$ is bi-Lipschitz equivalent to $(X_i,d_{\R^2})$ for $i=1,2$. Thus if $H_1,H_2$ are bi-Lipschitz equivalent, then so are $(X_1,d_{\R^2}), (X_2,d_{\R^2})$, a contradiction.
\end{proof}

\subsection{Flip-graphs of triangulations of infinite type}\label{ssec: flip-graph}
Let $\Sigma_{g,n}$ be a closed surface of genus $g$ with $n$ marked points. An \emph{ideal triangulation} of it is a $3$-gonal decomposition whose vertex set coincides with the $n$ marked points.\footnote{
  The naming comes from seeing the marked points as punctures, which are classically metrized as cusps. This is not the point of view we are going to take here though.
}
A \emph{flip} on a triangulation $D$ is a modification of $D$ obtained by choosing a quadrilateral in $D$---which must hence contain exactly one diagonal---and replacing its diagonal by a curve joining the other two vertices of the quadrilateral. This yields another ideal triangulation, uniquely defined up to \emph{marked isotopy} (\emph{i.e.}\ an isotopy that keeps the marked points fixed at all times).

A very interesting object associated with such a surface $\Sigma_{g,n}$ is its \emph{flip-graph} $\mathcal F(\Sigma_{g,n})$. This is a graph whose vertices are the marked-isotopy equivalence classes of ideal triangulations of $\Sigma_{g,n}$, and where two classes are joined by an edge if they (have representatives that) only differ by a flip. This graph is connected, and an important reason for studying it is that its group of graph-theoretic automorphisms is isomorphic to the mapping class group of $\Sigma_{g,n}$ \cite{korkmaz2012ideal}.

If one wishes to work with non compact surfaces, the situation becomes considerably more complicated. Let $X\subset \Sigma$ be a fixed set of marked points. One can analogously define a flip-graph $\mathcal F(\Sigma_X)$, but very simple examples show that if $\Sigma$ is non compact or $X$ is infinite then $\mathcal F(\Sigma_X)$ is not connected. One first remedy is to add edges to $\mathcal F(\Sigma_X)$ by declaring that two ideal triangulations are connected by an edge if they differ by a family of flips that can be performed simultaneously (\emph{i.e.}\ so that no incident edges are flipped). But this does not suffice: it is proven in \cite{fossas2022flip} that this graph remains disconnected, and this is used in \cite{bar2023big} to show that its automorphism group is strictly larger than the mapping class group.

On the other hand, \cite{fossas2022flip} also prove that two ideal triangulations $D$, $D'$ belong to the same connected component of $\mathcal F(\Sigma_X)$ if and only every edge of $D$ intersects boundedly many edges of $D'$ and vice versa.

Suppose now that $(\Sigma,d_\Sigma)$ is a complete Riemannian surface containing a uniform net $X$. Let $\mathcal F(\Sigma_X,d_\Sigma)$ be the graph having as vertices ideal triangulations\footnote{
  As above, by an \emph{ideal triangulation} we mean a 3-gonal decomposition of $\Sigma$ the vertex set of which coincides with $X$.}
that are quasi-isometric to $(\Sigma,d_\Sigma)$ when equipped with their simplicial metric (they exist by \cref{thm Maillot}) considered up to marked isotopy, and where two ideal triangulations are joined by an edge if they differ by a family of simultaneous flips. It then follows from the above criterion that $\mathcal F(\Sigma_X,d_\Sigma)$ is connected. It is natural to ask what the group of automorphisms of $\mathcal F(\Sigma_X,d_\Sigma)$ is.
Note that not every marked homeomorphism $\phi$ of $\Sigma$ defines an automorphism of $\mathcal F(\Sigma_X,d_\Sigma)$, because we are only admitting quasi-isometric triangulations. On the other hand, $\phi$ will induce an automorphism if it is also a quasi-isometry. Does every automorphism of $\mathcal F(\Sigma_X,d_\Sigma)$ arise this way? 

\begin{question}
  Is the group of automorphisms of $\mathcal F(\Sigma_X,d_\Sigma)$ isomorphic to the group of quasi-isometric marked homeomorphisms of $(\Sigma,d_\Sigma, X)$?
\end{question}
The classical results show that this is the case if $\Sigma$ is compact (and hence $X$ is finite and the quasi-isometry requirements are vacuous).

Other natural questions arise \emph{e.g.}\ when considering the quotient of the flip-graph by its group of automorphisms as  in \cite{disarlo2018simultaneous,disarlo2019geometry}.

\subsection{Higher dimensions}\label{ssec: hi dim} 

Our proofs rely heavily on the fact that $\Sigma$ is 2-dimensional. We would be interested to see extensions of the results of this paper, especially \Cref{thm:triangulationIntro,thm: uni net intro}, to higher-dimensional manifolds:
\begin{question} \label{Q:triangulationIntro}
Let $(M,d_M)$ be a complete Riemannian manifold of dimension $n$. Must there exist a triangulation $\mathcal D$ of $(M,d_M)$ such that the identity map from the 1-skeleton $(G,d^G_\ell)$ of $\mathcal D$ to $(M,d_M)$  is a quasi-isometry? In case $(M,d_M)$ admits a uniform net, can we choose $\mathcal D$ so that the simplicial metric on $G$ is quasi-isometric to $(M,d_M)$?

Can we choose $\mathcal D$ so that, in addition to the above, its simplices have uniformly bounded intrinsic diameters? (we could restrict this to simplices of dimension $n$, or require it for all dimensions $m\leq n$). 
\end{question}

Bowditch \cite{BowBil} obtained results of this flavour under stronger assumptions on $(M,d_M)$, namely bounded curvature and injectivity radius, with the stronger conclusion of bi-Lipschitz embedding of the triangulation into $M$; see also \cites{dyer2015riemannian,boissonnat2018delaunay,saucan2005note} for related studies.

\bibliography{collective,morebib}

\begin{bibdiv}
\begin{biblist}

\bib{AhlSarRie}{book}{
      author={Ahlfors, Lars~V.},
      author={Sario, Leo},
       title={Riemann {Surfaces}},
   publisher={Princeton University Press},
        date={1960},
        ISBN={978-0-691-08027-7},
         url={https://www.jstor.org/stable/j.ctt183ph39},
}

\bib{bar2023big}{article}{
      author={Bar-Natan, Assaf},
      author={Goel, Advay},
      author={Halstead, Brendan},
      author={Hamrick, Paul},
      author={Shenoy, Sumedh},
      author={Verma, Rishi},
       title={Big flip graphs and their automorphism groups},
        date={2023},
     journal={Glasnik matemati{\v{c}}ki},
      volume={58},
      number={1},
       pages={125\ndash 134},
}

\bib{BisRemNon}{article}{
      author={Bishop, C.J.},
      author={Rempe, L.},
       title={{Non-compact Riemann surfaces are equilaterally triangulable}},
        date={2026},
     journal={Invent.\ math.},
      volume={244},
       pages={1\ndash 43},
}

\bib{boissonnat2018delaunay}{article}{
      author={Boissonnat, Jean-Daniel},
      author={Dyer, Ramsay},
      author={Ghosh, Arijit},
       title={Delaunay triangulation of manifolds},
        date={2018},
     journal={Foundations of Computational Mathematics},
      volume={18},
      number={2},
       pages={399\ndash 431},
}

\bib{BowBil}{unpublished}{
      author={Bowditch, B.~H.},
       title={{Bilipschitz triangulations of Riemannian manifolds}},
        note={Preprint 2020, {http://bhbowditch.com/papers/triangulations.pdf}},
}

\bib{BuKlSep}{article}{
      author={Burago, D.},
      author={Kleiner, B.},
       title={{Separated nets in Euclidean space and Jacobians of bi-Lipschitz maps}},
        date={1998},
     journal={GAFA},
      volume={8},
       pages={273\ndash 282},
}

\bib{DavStr}{article}{
      author={Davies, J.},
       title={String graphs are quasi-isometric to planar graphs},
        note={arXiv:2510.19602},
}

\bib{DavSur}{article}{
      author={Davies, J.},
       title={Surfaces without quasi-isometric simplicial triangulations},
        note={arXiv:2603.26652,},
}

\bib{disarlo2018simultaneous}{article}{
      author={Disarlo, V.},
      author={Parlier, H.},
       title={Simultaneous flips on triangulated surfaces},
        date={2018},
     journal={Michigan Math.\ J.},
      volume={67},
      number={3},
       pages={451\ndash 464},
}

\bib{disarlo2019geometry}{article}{
      author={Disarlo, Valentina},
      author={Parlier, Hugo},
       title={The geometry of flip graphs and mapping class groups},
        date={2019},
     journal={Trans.\ Amer.\ Math.\ Soc.},
      volume={372},
      number={6},
       pages={3809\ndash 3844},
}

\bib{doCarmo}{book}{
      author={do~Carmo, Manfredo~Perdigao},
       title={Differential geometry of curves and surfaces},
   publisher={Prentice-Hall, Inc},
        date={1976},
}

\bib{dyer2015riemannian}{article}{
      author={Dyer, Ramsay},
      author={Vegter, Gert},
      author={Wintraecken, Mathijs},
       title={Riemannian simplices and triangulations},
        date={2015},
     journal={Geometriae Dedicata},
      volume={179},
      number={1},
       pages={91\ndash 138},
}

\bib{fossas2022flip}{article}{
      author={Fossas, A.},
      author={Parlier, H.},
       title={Flip graphs for infinite type surfaces},
        date={2022},
     journal={Groups Geom.\ Dyn.},
      volume={16},
      number={4},
       pages={1165\ndash 1178},
}

\bib{GeoPapMin}{article}{
      author={Georgakopoulos, A.},
      author={Papasoglu, P.},
       title={{Graph minors and metric spaces}},
        date={2025},
     journal={Combinatorica},
      volume={45},
       pages={33},
}

\bib{guillemin2025differential}{book}{
      author={Guillemin, Victor},
      author={Pollack, Alan},
       title={Differential topology},
   publisher={American Mathematical Society},
        date={2025},
      volume={370},
}

\bib{hatcher}{book}{
      author={Hatcher, Allen},
       title={Algebraic {T}opology},
   publisher={Cambrigde Univ.\ Press},
        date={2002},
}

\bib{kahn2012counting}{article}{
      author={Kahn, Jeremy},
      author={Markovi{\'c}, Vladimir},
       title={Counting essential surfaces in a closed hyperbolic three-manifold},
        date={2012},
     journal={Geometry \& Topology},
      volume={16},
      number={1},
       pages={601\ndash 624},
}

\bib{KanaiRough2}{article}{
      author={Kanai, M.},
       title={{Rough isometries, and combinatorial approximations of geometries of non-compact Riemannian manifolds.}},
        date={1985},
     journal={J. Math. Soc. Japan},
      volume={37},
       pages={391\ndash 413},
}

\bib{korkmaz2012ideal}{inproceedings}{
      author={Korkmaz, M.},
      author={Papadopoulos, A.},
       title={On the ideal triangulation graph of a punctured surface},
        date={2012},
   booktitle={Annales de l'institut fourier},
      volume={62},
       pages={1367\ndash 1382},
}

\bib{Maillot}{article}{
      author={Maillot, S.},
       title={Quasi-isometries of groups, graphs and surfaces},
        date={2001},
     journal={Comment.\ Math.\ Helv.},
      volume={76},
      number={1},
       pages={29\ndash 60},
}

\bib{NgScSeAsyII}{unpublished}{
      author={Nguyen, T.},
      author={Scott, A.},
      author={Seymour, P.},
       title={{Asymptotic structure. II. Path-width and additive quasi-isometry}},
        note={ArXiv:2509.09031},
}

\bib{ntalampekos2023polyhedral}{article}{
      author={Ntalampekos, Dimitrios},
      author={Romney, Matthew},
       title={Polyhedral approximation of metric surfaces and applications to uniformization},
        date={2023},
     journal={Duke Mathematical Journal},
      volume={172},
      number={9},
       pages={1673\ndash 1734},
}

\bib{saucan2005note}{article}{
      author={Saucan, Emil},
       title={{Note on a theorem of Munkres}},
        date={2005},
     journal={Mediterr.\ J.\ Math.},
      volume={2},
      number={2},
       pages={215\ndash 229},
}

\end{biblist}
\end{bibdiv}

\end{document}